\documentclass[11pt,a4paper,reqno]{amsart}

\usepackage{amsmath,amssymb,amsthm,mathtools,amscd,xcolor}
\usepackage{array,booktabs,enumitem,microtype}
\usepackage{tikz-cd}
\usepackage{hyperref}

\newcommand{\Aut}{\operatorname{Aut}}
\newcommand{\BCay}{\operatorname{BCay}}

\newcommand{\CT}{\operatorname{CT}}
\newcommand{\Syl}{\operatorname{Syl}}
\newcommand{\GL}{\operatorname{GL}}

\newcommand{\Beg}{\operatorname{beg}}
\newcommand{\End}{\operatorname{end}}
\newcommand{\Inv}{\operatorname{inv}}
\newcommand{\R}{\mathrm R}
\newcommand{\F}{\mathbb F}
\newcommand{\Z}{\mathbb Z}

\newcommand{\id}{\operatorname{id}}

\newcommand{\Q}{\mathbb{Q}}
\newcommand{\rank}{\mathrm{rank}}

\newcommand{\diag}{\mathrm{diag}}
\newcommand{\Cay}{\mathrm{Cay}}

\newtheorem{theorem}{Theorem}[section]
\newtheorem{proposition}[theorem]{Proposition}
\newtheorem{lemma}[theorem]{Lemma}
\newtheorem{corollary}[theorem]{Corollary}
\theoremstyle{definition}
\newtheorem{definition}[theorem]{Definition}
\newtheorem{remark}[theorem]{Remark}
\newtheorem{notation}[theorem]{Notation}

\begin{document}

\title[Generalized quaternion groups of order $4p$ are $4$-BCI-groups]{Generalized Quaternion Groups of Order \texorpdfstring{$4\lowercase{p}$}{4p} Are \texorpdfstring{$4$}{4}-BCI-Groups}

\author[J.~Xie]{Jin-Hua Xie}
\address{School of Mathematics and Statistics, Hainan Normal University, Haikou, P.~R.~China}
\email{jinhuaxie@hainnu.edu.cn}

\begin{abstract}

For a finite group $G$ and a subset $S\subseteq G$, the bi-Cayley graph $\BCay(G,S)$ has vertex set $G\times\{0,1\}$ and edges $\{(x,0),(sx,1)\}$ for $x\in G$ and $s\in S$. The group $G$ is called an $m$-BCI-group if, for all subsets $S,T\subseteq G$ with $|S|\leq m$, the isomorphism $\BCay(G,S)\cong\BCay(G,T)$ implies that $T=gS^\alpha$ for some $g\in G$ and $\alpha\in\Aut G$. We prove that, for every prime $p$, the generalized quaternion group $Q_{4p}$ is a $4$-BCI-group. For $p\geq5$, the proof uses regular coverings and voltage assignments: we pass to quotient graphs modulo the Sylow $p$-subgroup and lift isomorphisms between them.

\medskip
\noindent\textit{Keywords:} CI-property, BCI-property, bi-Cayley graph, quotient graph, regular covering, voltage assignment, generalized quaternion group

\smallskip
\noindent\textit{2020 Mathematics Subject Classification:} 20B25, 05C25

\end{abstract}

\maketitle

\section{Introduction}
The isomorphism problem for Cayley graphs is a classical topic in algebraic graph theory. Let $G$ be a finite group and let $S\subseteq G$ with $S=S^{-1}$. The Cayley graph $\Cay(G,S)$ is the graph with vertex set $G$ and edge set $\bigl\{\{x,sx\}\mid x\in G,\ s\in S\bigr\}$. A Cayley graph $\Cay(G,S)$ is called a \emph{CI-graph} if, for every $T\subseteq G$ with $T=T^{-1}$, the isomorphism
\[
    \Cay(G,S)\cong\Cay(G,T)
\]
implies that $T=S^\alpha$ for some $\alpha\in\Aut G$. A group $G$ is called a \emph{CI-group} if every Cayley graph over $G$ is a CI-graph. The CI-problem asks which Cayley graphs are CI-graphs and which groups are CI-groups. The problem goes back to the work of Babai~\cite{Babai1977} and has been studied extensively (see, for example, the survey~\cite{Li2002}).

A related isomorphism problem arises for bi-Cayley graphs. For a finite group $G$ and a subset $S\subseteq G$, the bi-Cayley graph $\BCay(G,S)$ has vertex set $G\times\{0,1\}$ and edge set
\[
\bigl\{\{(x,0),(sx,1)\}\mid x\in G,\ s\in S\bigr\}.
\]
The above construction is also known as a Haar graph in the literature (see, for example,~\cite{JinLiu2010,KoikeKovacs2014}). Two subsets $S,T\subseteq G$ are called \emph{BCI-equivalent} if
\[
T=gS^\alpha
\]
for some $g\in G$ and $\alpha\in\Aut G$. It is well known that, if $S$ and $T$ are BCI-equivalent, then $\BCay(G,S)\cong\BCay(G,T)$ (see, for example,~\cite[Lemma~2.2]{JinLiu2010}). A subset $S\subseteq G$ is called a \emph{BCI-subset} if, for every $T\subseteq G$, the isomorphism
\[
\BCay(G,S)\cong\BCay(G,T)
\]
implies that $S$ and $T$ are BCI-equivalent. The group $G$ is called an \emph{$m$-BCI-group} if every subset of $G$ of cardinality at most $m$ is a BCI-subset.

The BCI-property was introduced in~\cite{XuJinEtAl2008}, where the $2$-BCI-groups were characterized. Subsequent work has considered the $3$-BCI problem for several classes of groups. Jin and Liu~\cite{JinLiu2010} proved that $A_5$ is the only nonabelian simple $3$-BCI-group, while Koike and Kov\'acs~\cite{KoikeKovacs2019} classified the nilpotent $3$-BCI-groups. For connection sets of size $4$, Koike and Kov\'acs~\cite{KoikeKovacs2014} determined when two connected tetravalent bi-Cayley graphs over a cyclic group are isomorphic, and isomorphisms of bi-Cayley graphs over dihedral groups were studied in~\cite{ABGR2020}.

Generalized quaternion groups form a further important class of groups in the study of the BCI-problem. Related isomorphism problems for Cayley graphs over these groups have been studied in~\cite{Ma1997,XFK2022}. More recently, the BCI-property of generalized quaternion groups was investigated in~\cite{XieZhang2026}, where it was proved that, for
\[
Q_{4n}=\langle a,b\mid a^{2n}=1,\ b^2=a^n,\ b^{-1}ab=a^{-1}\rangle
\]
and $m\in\{2,3\}$, the group $Q_{4n}$ is an $m$-BCI-group if and only if $n$ is odd or $n=2$. The groups $Q_{4n}$ are also called dicyclic groups; some authors reserve the name generalized quaternion group for the case where $n$ is a power of $2$.

The present paper treats the next case of the BCI-problem for generalized quaternion groups. Our main result is the following.

\begin{theorem}\label{thm:intro-Q4p-4BCI}
For every prime $p$, the generalized quaternion group $Q_{4p}$ is a $4$-BCI-group.
\end{theorem}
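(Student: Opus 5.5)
We treat small primes separately and then handle $p\ge5$ by coverings, as the abstract indicates. For $p=2$ the group is $Q_8$, and the case $n=2$ is covered by the $m\in\{2,3\}$ results of~\cite{XieZhang2026}. For $p=2$ and $p=3$ the groups $Q_8$ and $Q_{12}$ are small enough that the $4$-subsets can be enumerated up to BCI-equivalence. We would then check directly, for example by comparing graph invariants (girth, numbers of $4$-cycles through an edge, and so on) or by machine computation, that non-equivalent classes give non-isomorphic bi-Cayley graphs. Subsets of size at most $3$ are already BCI-subsets by~\cite{XieZhang2026}, since $p$ is odd or $p=2$. So it remains to take $|S|=4$ and $p\ge5$.

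\textbf{Reductions.} We may translate so that $1\in S$. If $\langle S S^{-1}\rangle$, which governs connectivity, is a proper subgroup $H$, then $\BCay(G,S)$ is a disjoint union of copies of $\BCay(H,S')$. Any isomorphism with $\BCay(G,T)$ forces the same structure for $T$, with a subgroup of the same order. The proper subgroups of $Q_{4p}$ are cyclic: $\langle a\rangle\cong\Z_{2p}$, its subgroups, and the $\langle a^ib\rangle\cong\Z_4$. The BCI-property for $4$-subsets of these cyclic groups follows from Koike--Kov\'acs~\cite{KoikeKovacs2014}. We then need to show that the automorphisms of $H$ that arise extend to $\Aut Q_{4p}$; this is easy, since $\Aut Q_{4p}$ acts as $a\mapsto a^r$, $b\mapsto a^sb$. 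Hence we may assume $\Gamma=\BCay(G,S)$ is connected and tetravalent.

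\textbf{Main step.} Let $P=\langle a^2\rangle$ be the Sylow $p$-subgroup, which is normal and characteristic. The quotient $\Gamma_P$ is $\BCay(G/P,\bar S)$ with $G/P\cong\Z_4$, and $\Gamma$ is a regular $\Z_p$-cover of $\Gamma_P$. The voltages are read off from the $P$-cosets of $S$. The key claim is that, for $p\ge5$, $P$ (in fact the semiregular subgroup $\hat P$ of right multiplications) is normal, or at least Sylow and hence unique up to conjugacy, in $\Aut\Gamma$. For this we would use the standard bound that the vertex stabiliser in a connected tetravalent graph has order divisible only by $2$ and $3$ in the relevant arc-transitive cases. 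More robustly, a Sylow $p$-subgroup of $\Aut\Gamma$ has order $p$ when $p\ge5$ and $|V\Gamma|=8p$, provided the stabiliser is a $\{2,3\}$-group. We need a separate argument when $\Gamma$ is not arc-transitive, for instance that vertex stabilisers of tetravalent graphs have only prime divisors $\le3$.

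Given this, an isomorphism $\phi:\BCay(G,S)\to\BCay(G,T)$ can be conjugated so that it maps $\hat P$ to $\hat P$. It then projects to an isomorphism $\bar\phi$ of the quotients $\BCay(\Z_4,\bar S)\cong\BCay(\Z_4,\bar T)$, which is a multigraph on $8$ vertices. By the lifting criterion for voltage assignments, $\bar\phi$ lifts precisely when it maps the voltages of closed walks compatibly, that is, via some multiplier $r\in\Z_p^*$. We then do a finite case analysis over the possible multisets $\bar S$ in $\Z_4$ (how the four elements of $S$ distribute over the cosets $P,aP,bP,abP$). In each case we list the isomorphisms of the small quotient that lift, and translate them into an explicit $g\in G$ and $\alpha\in\Aut G$ with $T=gS^\alpha$.

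\textbf{Expected obstacle.} The hardest step is guaranteeing that $\hat P$ is preserved. This means controlling $\Aut\Gamma$ when it is large, for instance when $\Gamma$ is a lexicographic-type graph such as $C_{4p}[2K_1]$-like wreath structures or $K_{4,4}$-covers, whose stabilisers can be big $2$-groups. The Sylow argument still works there if the stabiliser is a $\{2,3\}$-group. I would first try to prove that, for connected tetravalent $\Gamma$ on $8p$ vertices with $p\ge5$, every prime divisor of $|\Aut\Gamma_v|$ is at most $3$. I would then handle the exceptional graphs with huge automorphism groups by identifying them explicitly and checking that all $S$ yielding them are BCI-equivalent.
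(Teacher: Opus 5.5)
Your plan is essentially the paper's route: small primes by citation or computation, the case $\langle S\rangle\neq Q$ via the cyclic-group results of Koike--Kov\'acs, and for $p\ge5$ a quotient by the right action of the Sylow $p$-subgroup $\langle a^2\rangle$, giving the eight-vertex multigraphs $\BCay(\Z_4,\bar S)$, followed by a check of which quotient isomorphisms lift via a multiplier in $\F_p^*$. The paper makes this finite analysis uniform in $p$ using integer matrices whose Smith invariants lie in $\{1,2,3\}$, and the step you flag as the main obstacle is in fact elementary and needs neither arc-transitivity nor a separate treatment of graphs with large stabilisers: an element of prime order $q\ge5$ fixing a vertex of a connected tetravalent graph fixes its four neighbours and hence, by connectivity, every vertex, so every $p$-subgroup of $\Aut\Gamma$ is semiregular and $\R_\Gamma(\langle a^2\rangle)$ is a Sylow $p$-subgroup (Proposition~\ref{prop:sylow-alignment}).
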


The case $p=2$ follows from~\cite[Lemma~3.4]{JinLiu2013}. For odd $p$, the $3$-BCI-property of $Q_{4p}$ proved in~\cite{XieZhang2026} reduces the theorem to connection sets of size $4$, and connection sets that contain $1$ and are contained in $\langle a\rangle$ are handled by known results on tetravalent bi-Cayley graphs over cyclic groups~\cite{ABGR2020,KoikeKovacs2014}. The remaining connection sets require a new argument.

Our main tool is the theory of regular coverings and voltage assignments. Every regular covering of a graph is described by a voltage assignment on the base graph~\cite{GT1977}, and whether an automorphism of the base graph lifts along a regular covering is determined by the voltages of closed walks~\cite{MNS2000}. For elementary abelian coverings, this becomes a linear condition on first homology over a finite field~\cite{MMP2004}. These tools are widely used in algebraic graph theory, for example to construct and classify symmetric graphs as regular coverings of small graphs and to lift prescribed groups of automorphisms; see~\cite{ConderMa2013,FengKwak2004,MMP2004,PS2019}. To our knowledge, however, they have not been used for CI- or BCI-problems, in which two different graphs must be compared rather than automorphisms of a single graph lifted.

Although these tools are available, the key question is how to apply them when two graphs $\Gamma$ and $\Gamma'$ are compared, that is, how to take proper quotients of $\Gamma$ and $\Gamma'$. We take quotients by semiregular subgroups $P\leq\Aut\Gamma$ and $P'\leq\Aut\Gamma'$. Since we are given only the two graphs and no isomorphism between them, $P$ and $P'$ must be chosen consistently: if $\Gamma\cong\Gamma'$, then some isomorphism $f\colon\Gamma\to\Gamma'$ must satisfy $f^{-1}Pf=P'$. Such an $f$ maps the $P$-orbits onto the $P'$-orbits and induces an isomorphism of the quotient graphs (Lemma~\ref{lem:quotient-alignment}). Without a consistent choice, $\Gamma$ and $\Gamma'$ may be isomorphic although no isomorphism between their quotient graphs lifts. This leads to the following reduction.

\begin{enumerate}[label=\textsf{Step \arabic*.},ref=\arabic*]

\item\label{step:intro-choose-quotients}
Choose semiregular subgroups $P\leq\Aut\Gamma$ and $P'\leq\Aut\Gamma'$ consistently, that is, such that, whenever $\Gamma\cong\Gamma'$, there exists an isomorphism $f\colon\Gamma\to\Gamma'$ with $f^{-1}Pf=P'$.

\item\label{step:intro-quotient-isomorphisms}
Determine the isomorphisms between the quotient graphs $\Gamma_P$ and $\Gamma'_{P'}$.

\item\label{step:intro-lifting}
Determine which such quotient isomorphisms lift to isomorphisms between $\Gamma$ and $\Gamma'$.

\end{enumerate}

For connected $d$-valent $t$-Cayley graphs over a group $G$ (Definition~\ref{def:t-Cayley}), a consistent choice is given by $P=\R_\Gamma(N)$ and $P'=\R_{\Gamma'}(N)$ for a Sylow $p$-subgroup $N$ of $G$, for every prime $p>d$ not dividing $t$ (Proposition~\ref{prop:sylow-alignment}). When $P$ and $P'$ are elementary abelian $p$-groups, Step~\ref{step:intro-lifting} can be expressed in terms of the maps induced by the voltage assignments on first homology over $\F_p$ (Corollary~\ref{cor:homology-comparison}, which reformulates~\cite[Corollary~3.3]{MMP2004} for two different base graphs). For fixed quotient graphs and fixed voltage assignments, Step~\ref{step:intro-lifting} is therefore a finite computation over $\F_p$ (Remark~\ref{rem:computational-lifting}).

For tetravalent bi-Cayley graphs over $Q_{4p}$ with $p\geq5$, we take $P$ and $P'$ to be the images of the Sylow $p$-subgroup of $Q_{4p}$. Only eight quotient graphs occur, and the voltages depend linearly on the parameters of the connection sets. This allows us to carry out Steps~\ref{step:intro-quotient-isomorphisms} and~\ref{step:intro-lifting} for all these graphs and all primes $p\geq5$ at once, by a computation with finitely many integer matrices (Section~\ref{sec:Q4p}). The case $p=3$ is settled by a direct computation.

The paper is organized as follows. Section~\ref{sec:preliminaries} introduces the preliminaries on graph coverings, voltage assignments, and homology needed later. Section~\ref{sec:reduction} develops the quotient reduction and the homological lifting criterion. In Section~\ref{sec:Q4p}, we apply the method to tetravalent bi-Cayley graphs over $Q_{4p}$ and prove Theorem~$\ref{thm:intro-Q4p-4BCI}$. Appendix~\ref{apdx:computational-verification} records the computational details used in the application.

\section{Graph coverings and voltage assignments}\label{sec:preliminaries}

The theory of regular coverings and of lifting automorphisms is usually developed in the dart model of graphs (see, for example,~\cite{MMP2004,MNS2000}), which has proved more convenient for this purpose. We therefore work in the dart model and follow~\cite[Section~2]{MMP2004}, with notation adapted to the present setting.

\subsection{Graphs}

A \emph{graph} is a quadruple
\[
X=(V(X),D(X),\Beg_X,\Inv_X),
\]
where $V(X)$ and $D(X)$ are disjoint finite sets, $\Beg_X\colon D(X)\to V(X)$ is a map assigning to each dart its \emph{initial vertex}, and $\Inv_X$ is an involutory permutation of $D(X)$. 

For $d\in D(X)$, we write $d^{-1}=\Inv_X d$ for the \emph{inverse} of $d$, and define $\End_X d=\Beg_X d^{-1}$ to be the \emph{terminal vertex} of $d$. For convenience, we write $d\colon u\to v$ to indicate that $d$ is a dart with initial vertex $u$ and terminal vertex $v$. The orbits of $\Inv_X$ on $D(X)$ are called the \emph{edges} of $X$. An edge $\{d,d^{-1}\}$ is called a \emph{semiedge} if $d=d^{-1}$, a \emph{loop} if $d\ne d^{-1}$ and $\Beg_X d=\End_X d$, and a \emph{link} otherwise. Distinct links with the same endvertices are called \emph{parallel}. When the graph under consideration is clear from the context, the subscript $X$ is omitted.

\subsection{Walks and homology}\label{subsec:fundamental}

A \emph{walk} in $X$ is a sequence $W=(d_1,\ldots,d_\ell)$ of darts satisfying $\End d_i=\Beg d_{i+1}$ for $1\leq i<\ell$. We also regard each vertex $v$ as the trivial walk of length $0$ from $v$ to itself. The \emph{initial vertex} and \emph{terminal vertex} of $W$ are defined by $\Beg W=\Beg d_1$ and $\End W=\End d_\ell$, respectively, and the \emph{inverse} of $W$ is
\[
W^{-1}=(d_\ell^{-1},\ldots,d_1^{-1}).
\]
A walk is called closed if $\Beg W=\End W$. A graph $X$ is said to be \emph{connected} if, for every pair of vertices $u,v\in V(X)$, there exists a walk $W$ in $X$ with $\Beg W=u$ and $\End W=v$.

For a prime $p$, we use the first homology group $H_1(X;\F_p)$ in the combinatorial sense of~\cite[p.~73]{MMP2004}. We recall only the description needed below. Let $X$ be connected and let $T$ be a spanning tree of $X$. The links and loops of $X$ not contained in $T$ are called \emph{cotree} links and loops, and their darts are called \emph{cotree darts}. For a cotree dart $d$, let $Q_d$ be the unique walk in $T$ from $\End d$ to $\Beg d$ with no repeated vertices. The closed walk $C_d=dQ_d$ is called the \emph{fundamental closed walk} determined by $d$; it traverses $d$ once and otherwise uses only darts of $T$, like the fundamental cycle of a cotree edge in ordinary graph theory. Choose one dart $d_i$ from each cotree link or loop, for $1\leq i\leq s$, and put $C_i=C_{d_i}$. If $p$ is odd, the homology classes of $C_1,\ldots,C_s$ form a basis of $H_1(X;\F_p)$. If $p=2$, the homology classes represented by the darts of the semiedges must be added to this basis.

\subsection{Morphisms}

A \emph{morphism} of graphs $f\colon X\to Y$ is a map
\[
f\colon V(X)\cup D(X)\longrightarrow V(Y)\cup D(Y)
\]
such that $f(V(X))\subseteq V(Y)$, $f(D(X))\subseteq D(Y)$, and for each $d\in D(X)$,
\[
f(\Beg_X d)=\Beg_Y(f(d))\ \text{ and }\ f(\Inv_X d)=\Inv_Y(f(d)).
\]

A graph morphism is an \emph{epimorphism} or an \emph{isomorphism} if it is surjective or bijective, respectively. An isomorphism from $X$ to itself is an \emph{automorphism}, and the group of all automorphisms of $X$ is denoted by $\Aut X$.

\medskip
\noindent\textbf{Convention.}
Throughout, isomorphisms act on the right, whereas all other maps are written in functional notation. In particular, if $f\colon X\to Y$ is a graph isomorphism and $x\in V(X)\cup D(X)$, then the image of $x$ under $f$ is denoted by $x^f$. If $g\colon Y\to Z$ is another graph isomorphism, then their composite from $X$ to $Z$ is denoted by $fg$, and $x^{fg}=(x^f)^g$.

\subsection{Quotient and covering projections}

Let $X$ be a graph and let $P\leq\Aut X$. The \emph{quotient graph} of $X$ relative to $P$, denoted by $X_P$, is the quadruple $(V(X_P),D(X_P),\Beg_{X_P},\Inv_{X_P})$, where
\[
V(X_P)=\{v^P\mid v\in V(X)\}\ \text{ and }\ D(X_P)=\{d^P\mid d\in D(X)\}
\]
are the sets of $P$-orbits on $V(X)$ and $D(X)$, respectively, and, for each $d\in D(X)$,
\[
\Beg_{X_P}d^P=(\Beg_X d)^P\ \text{ and }\ \Inv_{X_P}d^P=(\Inv_X d)^P.
\]
The natural epimorphism $\wp_P\colon X\to X_P$ sending each $x\in V(X)\cup D(X)$ to $x^P$ is called the \emph{quotient projection} relative to $P$.

For $v\in V(X)$, put $X(v)=\{d\in D(X)\mid \Beg_X d=v\}$. A graph epimorphism $\wp\colon\widetilde X\to X$ is called a \emph{covering projection} if, for every $\widetilde v\in V(\widetilde X)$, it maps $\widetilde X(\widetilde v)$ bijectively onto $X(\wp(\widetilde v))$. For $x\in V(X)\cup D(X)$, the set $\wp^{-1}(x)$ is called the \emph{fibre} over $x$.

Let $\wp_i\colon\widetilde X_i\to X_i$, $i=1,2$, be covering projections. An \emph{isomorphism of covering projections} $(\varphi,\widetilde\varphi)\colon \wp_1\to\wp_2$ is a pair of graph isomorphisms $\varphi\colon X_1\to X_2$ and $\widetilde\varphi\colon\widetilde X_1\to\widetilde X_2$ such that $\wp_2(x^{\widetilde\varphi})=\bigl(\wp_1(x)\bigr)^\varphi$ for every $x\in V(\widetilde X_1)\cup D(\widetilde X_1)$. Equivalently, the following diagram commutes.
\[
\begin{CD}
\widetilde X_1 @>{\widetilde\varphi}>> \widetilde X_2\\
@V{\wp_1}VV @VV{\wp_2}V\\
X_1 @>{\varphi}>> X_2
\end{CD}
\]
In this case, we say that $\varphi$ \emph{lifts along} $\wp_1$ and $\wp_2$ to $\widetilde\varphi$. If $X_1=X_2=X$, then $\wp_1$ and $\wp_2$ are \emph{equivalent} if there exists such an isomorphism of covering projections $(\id_X,\widetilde{\varphi}):\wp_1\to \wp_2$.

For a covering projection $\wp\colon\widetilde X\to X$, its \emph{group of covering transformations} is
\[
\CT(\wp)=\{\alpha\in\Aut\widetilde X\mid \wp(x^\alpha)=\wp(x)\text{ for every }x\in V(\widetilde X)\cup D(\widetilde X)\}.
\]
If $\widetilde X$ is connected, then $\CT(\wp)$ acts semiregularly on each fibre. The covering projection $\wp$ is called \emph{regular} if $\CT(\wp)$ acts transitively, and hence regularly, on each fibre.

\begin{proposition}[{\cite[Lemma~3]{PP2017}}]\label{prop:quotient-regular-cover}
If $X$ is a connected graph and $P\leq\Aut X$ is semiregular on $V(X)$, then $\wp_P$ is a regular covering projection and $\CT(\wp_P)=P$. 
\end{proposition}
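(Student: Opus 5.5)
The plan is to prove the two assertions in turn: first that $\wp_P$ is a covering projection, and then that its group of covering transformations is exactly $P$ and acts transitively on every fibre. Regularity then follows from the definition.

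\textbf{Step 1: $\wp_P$ is a covering projection.} It is an epimorphism by construction. Fix $v\in V(X)$. The map $\wp_P$ sends $X(v)$ into $X_P(v^P)$, since $\Beg_{X_P}d^P=(\Beg d)^P=v^P$.

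For surjectivity, take a dart $d^P$ of $X_P$ with $(\Beg d)^P=v^P$. Then $(\Beg d)^g=v$ for some $g\in P$, so $d^g\in X(v)$ and $(d^g)^P=d^P$.

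For injectivity, suppose $d,e\in X(v)$ and $e=d^g$ with $g\in P$. Then $v^g=(\Beg d)^g=\Beg e=v$, so semiregularity on vertices gives $g=1$ and hence $e=d$.

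This is the only place semiregularity on $V(X)$ is used. It also shows that $P$ acts semiregularly on darts, since a dart fixed by $g$ forces $g$ to fix its initial vertex.

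\textbf{Step 2: $\CT(\wp_P)=P$.} Every $g\in P$ preserves each $P$-orbit, so $P\leq\CT(\wp_P)$. Moreover, $P$ is transitive on each fibre, because the fibres are precisely the $P$-orbits. For the reverse inclusion, let $\alpha\in\CT(\wp_P)$ and fix a vertex $v$. Since $v^\alpha\in v^P$, there is $g\in P$ with $v^\alpha=v^g$, so $\beta=\alpha g^{-1}$ is a covering transformation fixing $v$.

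It remains to show that $\beta=1$. Here I would invoke the semiregularity of $\CT$ on fibres of a connected cover, which the text states. If one prefers not to rely on it, the standard argument is direct.
\begin{itemize}
\item Since $\beta$ fixes $v$ and preserves fibres, it permutes $X(v)$ in a way compatible with the bijection $X(v)\to X_P(v^P)$ of Step 1.
\item It therefore fixes every dart at $v$, and hence the terminal vertex of each such dart.
\item Propagating along walks, using that $X$ is connected, $\beta$ fixes every vertex and every dart.
\end{itemize}
Hence $\alpha=g\in P$.

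Combining the two steps, $\CT(\wp_P)=P$ acts transitively, and hence regularly, on each fibre, so $\wp_P$ is regular.

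The main obstacle is the bookkeeping with semiedges and loops in the dart model, where the injectivity argument must still hold. Since it only uses initial vertices, I expect no genuine difficulty there.
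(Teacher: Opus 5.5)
Your proof is correct. Vertex-semiregularity gives the local bijectivity of $\wp_P$, the fibres are the $P$-orbits so $P\leq\CT(\wp_P)$ acts transitively on them, and connectivity gives $\CT(\wp_P)\leq P$ (via either the semiregularity of covering transformations on a connected cover or your direct propagation argument). The paper gives no proof of its own and only cites \cite[Lemma~3]{PP2017}, so your argument is the standard verification that the citation stands in for.
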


\subsection{Ordinary voltages}\label{subsec:ordinary-voltages}

Let $X$ be a graph and let $N$ be a finite group. An \emph{$N$-voltage assignment} on $X$ is a map $\zeta\colon D(X)\to N$ such that $\zeta(d^{-1})=\zeta(d)^{-1}$ for every $d\in D(X)$. The element $\zeta(d)$ is called the \emph{voltage} of $d$. The \emph{derived graph} $X\times_\zeta N$ has vertex set $V(X)\times N$ and dart set $D(X)\times N$, with
\[
\Beg_{X\times_\zeta N}(d,n)=(\Beg_X d,n)
\ \text{ and }\
\Inv_{X\times_\zeta N}(d,n)=(d^{-1},n\zeta(d)).
\]
The projection
\[
\wp_\zeta\colon X\times_\zeta N\longrightarrow X,\ \ (x,n)\longmapsto x
\]
is a regular covering projection, called the \emph{derived covering projection}.

\begin{proposition}[{\cite[Section~2]{MMP2004}}]\label{prop:voltage-description}
Let $\wp\colon\widetilde X\to X$ be a connected regular covering projection, and let $N\cong\CT(\wp)$. Then there exist an $N$-voltage assignment $\zeta$ on $X$ and an equivalence of covering projections $(\id_X,\psi)\colon\wp_\zeta\to\wp$.
\end{proposition}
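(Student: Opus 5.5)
The plan is to build the voltage assignment explicitly from the regular covering. By hypothesis $\widetilde X$ is connected and $\CT(\wp)\cong N$ acts regularly on each fibre. Fix such an identification, so that we may write $N=\CT(\wp)$, with elements acting on the right as in our convention.

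First, for each vertex $v\in V(X)$, choose a base point $\widetilde v\in\wp^{-1}(v)$; this is a choice of section. Regularity gives that every element of $\wp^{-1}(v)$ can be written uniquely as $\widetilde v^{\,n}$ with $n\in N$.

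Next, for each dart $d\colon u\to v$ of $X$, let $\widetilde d$ be the unique dart of $\widetilde X(\widetilde u)$ lying over $d$; it exists and is unique because $\wp$ maps $\widetilde X(\widetilde u)$ bijectively onto $X(u)$. Its terminal vertex lies in $\wp^{-1}(v)$, so it equals $\widetilde v^{\,\zeta(d)}$ for a unique $\zeta(d)\in N$. This defines $\zeta$.

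To check that $\zeta(d^{-1})=\zeta(d)^{-1}$, use that covering transformations commute with $\Inv$ and $\Beg$. The dart $\widetilde d^{-1}$ starts at $\widetilde v^{\,\zeta(d)}$ and lies over $d^{-1}$. Applying $\zeta(d)^{-1}$ gives a dart starting at $\widetilde v$ over $d^{-1}$, which by uniqueness is $\widetilde{d^{-1}}$. Hence $\widetilde{d^{-1}}$ ends at $\widetilde u^{\,\zeta(d)^{-1}}$, so $\zeta(d^{-1})=\zeta(d)^{-1}$. The semiedge case $d=d^{-1}$ is included, and there it forces $\zeta(d)^2=1$.

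Now define $\psi\colon X\times_\zeta N\to\widetilde X$ by $(v,n)\mapsto \widetilde v^{\,n}$ and $(d,n)\mapsto\widetilde d^{\,n}$.
\begin{itemize}
\item $\psi$ is a bijection on vertices, by regularity on fibres.
\item $\psi$ is a bijection on darts, since the darts over $d$ are exactly the $\widetilde d^{\,n}$: by unique lifting they are determined by their initial vertices, and these run over the fibre.
\item $\psi$ preserves $\Beg$, because $\Beg(\widetilde d^{\,n})=\widetilde u^{\,n}$.
\item $\psi$ preserves $\Inv$. We need $(\widetilde d^{\,n})^{-1}=\widetilde{d^{-1}}^{\;\zeta(d)n}$. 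The left side starts at $\widetilde v^{\,\zeta(d)n}$ and lies over $d^{-1}$, and so does the right side, so uniqueness of lifts finishes the check.
\end{itemize}
Finally, $\wp\circ\psi=\wp_\zeta$ holds by construction, so $(\id_X,\psi)$ is an equivalence of covering projections.

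The main obstacle is keeping the order of multiplication consistent with the right action convention. Right actions give $\widetilde v^{\,\zeta(d)n}=(\widetilde v^{\,\zeta(d)})^n$, which matches $\Inv(d,n)=(d^{-1},n\zeta(d))$ only after composing with the anti-isomorphism $n\mapsto n^{-1}$ (or, equivalently, replacing $\CT(\wp)$ by its opposite group). I would fix this by defining $\psi(v,n)=\widetilde v^{\,n^{-1}}$ and rechecking the $\Inv$ identity. This is harmless because the statement only requires $N\cong\CT(\wp)$ abstractly.
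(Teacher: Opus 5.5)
Your construction is the standard fibre-labelling argument that underlies the citation to \cite[Section~2]{MMP2004}; the paper gives no proof of its own. Most of your steps are correct: the choice of base points, the definition of $\zeta$ by unique lifting, the identity $\widetilde d^{\,-1}=\widetilde{d^{-1}}^{\,\zeta(d)}$ (which gives $\zeta(d^{-1})=\zeta(d)^{-1}$), the bijectivity of $\psi$, and its compatibility with $\Beg$. You are also right that your $\Inv$ bullet actually verifies the rule $\Inv(d,n)=(d^{-1},\zeta(d)n)$, not the paper's rule $(d^{-1},n\zeta(d))$.

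The repair you commit to at the end, however, fails as stated. Suppose you replace $\psi$ by $(x,n)\mapsto\widetilde x^{\,n^{-1}}$ but keep $\zeta$. Then $\psi\bigl(\Inv(d,n)\bigr)=\widetilde{d^{-1}}^{\,\zeta(d)^{-1}n^{-1}}$, whereas $\Inv\bigl(\psi(d,n)\bigr)=(\widetilde d^{\,-1})^{n^{-1}}=\widetilde{d^{-1}}^{\,\zeta(d)n^{-1}}$. By semiregularity these agree only when $\zeta(d)^2=1$. In particular, this change breaks the abelian case, which is the case used in the paper and where your original $\psi$ already worked.

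The correct fix is your parenthetical one. Keep $\zeta$ and $\psi$ unchanged, and regard $\zeta$ as an assignment into $\CT(\wp)^{\mathrm{op}}$, whose product $n\cdot\zeta(d)$ is $\zeta(d)n$ in $\CT(\wp)$. Then transport to $N$ along $\CT(\wp)^{\mathrm{op}}\cong\CT(\wp)\cong N$, where the first map is $g\mapsto g^{-1}$. Equivalently, if you want to stay in $\CT(\wp)$, you must invert both at once: use $\psi(x,n)=\widetilde x^{\,n^{-1}}$ together with the voltage assignment $d\mapsto\zeta(d)^{-1}$.
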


If $\wp_\zeta$ is equivalent to $\wp$, then $\zeta$ is called an \emph{$N$-voltage description} of $\wp$.

The voltage assignment extends to walks by
\[
\zeta(W)=\zeta(d_1)\cdots\zeta(d_\ell)
\]
for every walk $W=(d_1,\ldots,d_\ell)$. Suppose that $N$ is an elementary abelian $p$-group, written additively. Then the voltage of a closed walk depends only on its homology class, and $\zeta$ induces an $\F_p$-linear map 
\[
\overline\zeta\colon H_1(X;\F_p)\longrightarrow N,\ \ [W]\longmapsto\zeta(W).
\]

If $X$ is connected, then $X\times_\zeta N$ is connected if and only if every element of $N$ is the voltage of a closed walk based at some, equivalently every, vertex of $X$; see~\cite[Section~2]{MMP2004}. Consequently, when $N$ is elementary abelian, $\wp_\zeta$ is connected if and only if $\overline\zeta$ is surjective. Throughout this paper, all derived covering projections under consideration are connected.

\section{The quotient--lifting method}\label{sec:reduction}

In this section, we develop the quotient--lifting method described in the Introduction. Subsection~\ref{subsec:quotient-selection} concerns Step~\ref{step:intro-choose-quotients}, and Subsection~\ref{subsec:quotient-lifting} concerns Step~\ref{step:intro-lifting}. Step~\ref{step:intro-quotient-isomorphisms} depends on the structure of the quotient graphs and is carried out separately in each application.

\subsection{Choosing quotient projections}\label{subsec:quotient-selection}

After the subgroups $P$ and $P'$ in Step~\ref{step:intro-choose-quotients} have been chosen, the following lemma shows that an isomorphism $f$ between the original graphs satisfying $f^{-1}Pf=P'$ induces an isomorphism of the corresponding quotient projections. 

\begin{lemma}\label{lem:quotient-alignment}
Let $\Gamma$ and $\Gamma'$ be connected graphs, let $P\leq\Aut\Gamma$ and $P'\leq\Aut\Gamma'$ be subgroups acting semiregularly on vertices, and let $f\colon\Gamma\to\Gamma'$ be an isomorphism. If $f^{-1}Pf=P'$, then $f$ induces an isomorphism $\overline f\colon\Gamma_P\to\Gamma'_{P'}$ defined by $(x^P)^{\overline f}=(x^f)^{P'}$ for every $x\in V(\Gamma)\cup D(\Gamma)$. Moreover, $(\overline f,f)\colon\wp_P\to\wp_{P'}$ is an isomorphism of regular covering projections.
\end{lemma}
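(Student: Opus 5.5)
The plan is to verify directly that the formula $(x^P)^{\overline f}=(x^f)^{P'}$ gives a well-defined bijection $V(\Gamma_P)\cup D(\Gamma_P)\to V(\Gamma'_{P'})\cup D(\Gamma'_{P'})$ that respects $\Beg$ and $\Inv$. The lifting statement then follows from the commuting square, with Proposition~\ref{prop:quotient-regular-cover} supplying regularity.

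\emph{Well-definedness and bijectivity.} Since isomorphisms act on the right, $f^{-1}Pf=P'$ means that for each $g\in P$ the element $g'=f^{-1}gf$ lies in $P'$, and $x^{gf}=x^{fg'}$. So if $y=x^g$ with $g\in P$, then $y^f=(x^f)^{g'}\in(x^f)^{P'}$. Hence $\overline f$ is well defined, and it clearly maps vertices to vertices and darts to darts. The hypothesis $f^{-1}Pf=P'$ is equivalent to $fP'f^{-1}=P$, so the same argument applied to $f^{-1}\colon\Gamma'\to\Gamma$ gives a well-defined map $\overline{f^{-1}}$. The two induced maps are mutually inverse, so $\overline f$ is a bijection. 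Surjectivity also follows directly, because $f$ is surjective.

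\emph{Morphism property.} For a dart $d$ of $\Gamma$,
\[
\Beg_{\Gamma'_{P'}}\bigl((d^P)^{\overline f}\bigr)=\Beg_{\Gamma'_{P'}}(d^f)^{P'}=(\Beg_{\Gamma'}d^f)^{P'}=\bigl((\Beg_\Gamma d)^f\bigr)^{P'}=\bigl((\Beg_\Gamma d)^P\bigr)^{\overline f}=\bigl(\Beg_{\Gamma_P}d^P\bigr)^{\overline f}.
\]
This uses the definition of the quotient graph and the fact that $f$ commutes with $\Beg$. The same computation works for $\Inv$. Therefore $\overline f$ is a graph isomorphism.

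\emph{Covering isomorphism.} By Proposition~\ref{prop:quotient-regular-cover}, the projections $\wp_P$ and $\wp_{P'}$ are regular covering projections, since $\Gamma$ and $\Gamma'$ are connected and $P$ and $P'$ act semiregularly on vertices. The defining identity gives
\[
\wp_{P'}(x^f)=(x^f)^{P'}=(x^P)^{\overline f}=\bigl(\wp_P(x)\bigr)^{\overline f},
\]
which is exactly the commutativity required for $(\overline f,f)$ to be an isomorphism of covering projections. I expect no real obstacle here. The only point needing care is the direction of conjugation under the right-action convention, namely checking that $f^{-1}Pf=P'$ is the correct condition for transporting $P$-orbits to $P'$-orbits.
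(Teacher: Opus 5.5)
Your proof is correct and follows the paper's argument essentially step for step: well-definedness via $x^{gf}=(x^f)^{f^{-1}gf}$ with $f^{-1}gf\in P'$, bijectivity via the map induced by $f^{-1}$ (using $fP'f^{-1}=P$), compatibility with $\Beg$ and $\Inv$ inherited from $f$, and regularity of both projections from Proposition~\ref{prop:quotient-regular-cover}. Your explicit $\Beg$ computation just spells out what the paper states in one line.
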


\begin{proof}
Let $x,y\in V(\Gamma)\cup D(\Gamma)$ and suppose that $x^P=y^P$. Then $y=x^\alpha$ for some $\alpha\in P$, and
\[
y^f=(x^\alpha)^f=(x^f)^{f^{-1}\alpha f},
\]
where $f^{-1}\alpha f\in P'$. Hence $(x^f)^{P'}=(y^f)^{P'}$, and so $\overline f$ is well defined. Since $f$ preserves initial vertices and dart inversion, so does $\overline f$. Moreover, since $fP'f^{-1}=P$, the analogous map induced by $f^{-1}$ is the inverse of $\overline f$. Hence $\overline f$ is an isomorphism from $\Gamma_P$ to $\Gamma'_{P'}$, and $\wp_{P'}(x^f)=\bigl(\wp_P(x)\bigr)^{\overline f}$ for every $x\in V(\Gamma)\cup D(\Gamma)$ by the definition of $\overline f$. By Proposition~$\ref{prop:quotient-regular-cover}$, both quotient projections are regular covering projections. Therefore $(\overline f,f)\colon\wp_P\to\wp_{P'}$ is an isomorphism of regular covering projections.
\end{proof}

To apply Lemma~$\ref{lem:quotient-alignment}$, it is enough to have an isomorphism $h\colon\Gamma\to\Gamma'$ for which $h^{-1}Ph$ is conjugate to $P'$ in $\Aut\Gamma'$. Indeed, if $\alpha\in\Aut\Gamma'$ satisfies $\alpha^{-1}(h^{-1}Ph)\alpha=P'$, then the isomorphism $f=h\alpha$ satisfies $f^{-1}Pf=P'$. The following proposition gives a setting, used in Section~\ref{sec:Q4p}, in which this conjugacy condition is automatic. To state it, we introduce the following terminology.

\begin{definition}\label{def:t-Cayley}
Let $G$ be a finite group and let $t$ be a positive integer. A \emph{$t$-Cayley graph} over $G$ is a graph $\Gamma$ equipped with an injective homomorphism
\[
\R_\Gamma\colon G\longrightarrow\Aut\Gamma
\]
such that $\R_\Gamma(G)$ acts semiregularly on $V(\Gamma)$ and has exactly $t$ orbits. Thus $\R_\Gamma(g)$ fixes a vertex only if $g=1$. For $H\leq G$, put
\[
\R_\Gamma(H)=\{\R_\Gamma(h)\mid h\in H\}.
\]
In particular, every $\R_\Gamma(G)$-orbit has cardinality $|G|$, and hence $|V(\Gamma)|=t|G|$.
\end{definition}

\begin{proposition}\label{prop:sylow-alignment}
Let $\Gamma$ be a connected $d$-valent $t$-Cayley graph over $G$, let $p>d$ be a prime with $p\nmid t$, and let $N\in\Syl_p(G)$. Then
\[
\R_\Gamma(N)\in\Syl_p(\Aut\Gamma).
\]
Consequently, if $\Gamma'$ is another connected $d$-valent $t$-Cayley graph over $G$ isomorphic to $\Gamma$, then there exists an isomorphism $f\colon\Gamma\to\Gamma'$ such that
\[
f^{-1}\R_\Gamma(N)f=\R_{\Gamma'}(N).
\]
\end{proposition}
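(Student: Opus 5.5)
The plan is to bound the $p$-part of $|\Aut\Gamma|$ by an orbit–stabilizer argument and compare it with $|N|$. Let $A=\Aut\Gamma$ and fix a vertex $v\in V(\Gamma)$. Then $|A|=|v^A|\cdot|A_v|$.

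\emph{The vertex stabilizer is a $p'$-group.} I claim $p\nmid|A_v|$. Since $\Gamma$ is connected and $d$-valent, I will argue by induction along a spanning tree (or breadth-first ordering of vertices) that any element of $A_v$ of order $p$ is trivial. Suppose $\sigma\in A_v$ has order $p$. Then $\sigma$ permutes the $d$ darts of $\Gamma(v)$. Since $p>d$, every orbit of $\langle\sigma\rangle$ on $\Gamma(v)$ has length $1$, so $\sigma$ fixes each dart at $v$ and hence each terminal vertex. Connectedness then propagates this: $\sigma$ fixes every vertex and every dart, so $\sigma=1$. By Cauchy's theorem, $p\nmid|A_v|$. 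Semiedges and parallel darts cause no trouble here, since we argue with darts rather than neighbours.

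\emph{Comparing $p$-parts.} Since $v^A\subseteq V(\Gamma)$, which has size $t|G|$, the $p$-part of $|v^A|$ is at most the $p$-part of $t|G|$. As $p\nmid t$, this equals $|N|$. Therefore $|A|_p\le|N|$. On the other hand, $\R_\Gamma$ is injective, so $\R_\Gamma(N)$ is a $p$-subgroup of $A$ of order $|N|$. Hence it is a Sylow $p$-subgroup of $A$.

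\emph{The consequence.} Let $h\colon\Gamma\to\Gamma'$ be any isomorphism. Conjugation by $h$ carries Sylow $p$-subgroups of $\Aut\Gamma$ to Sylow $p$-subgroups of $\Aut\Gamma'$, since $x\mapsto h^{-1}xh$ is a group isomorphism $\Aut\Gamma\to\Aut\Gamma'$ under the right-action convention. So $h^{-1}\R_\Gamma(N)h$ is a Sylow $p$-subgroup of $\Aut\Gamma'$. By the first part applied to $\Gamma'$, so is $\R_{\Gamma'}(N)$. Sylow's theorem gives $\alpha\in\Aut\Gamma'$ with $\alpha^{-1}h^{-1}\R_\Gamma(N)h\alpha=\R_{\Gamma'}(N)$, and $f=h\alpha$ is the required isomorphism, exactly as in the remark following Lemma~\ref{lem:quotient-alignment}.

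The only step requiring care is the propagation claim in the first part: an order-$p$ automorphism fixing $v$ fixes everything. This works because each vertex fixed by $\sigma$ has all its darts fixed, by the same $p>d$ argument, and the set of fixed vertices is then closed under adjacency, so it is all of $V(\Gamma)$ by connectedness.
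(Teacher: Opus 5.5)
Your first and third steps are correct and match the paper: the $p>d$ dart-fixing argument propagated by connectedness, then conjugating Sylow subgroups via $h$ and taking $f=h\alpha$.

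The gap is in the counting step. You infer from ``$v^A\subseteq V(\Gamma)$'' that the $p$-part of $|v^A|$ is at most the $p$-part of $t|G|$, and this does not follow. Containment bounds the size, not the $p$-part. For $p=5$, a $25$-element subset of a $40$-element set has $5$-part $25$, while the whole set has $5$-part $5$. Orbit lengths also need not divide $|V(\Gamma)|$. So knowing one orbit lies inside $V(\Gamma)$ gives no bound $|A|_p\le|N|$.

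The repair uses your stabilizer fact at every vertex, not only at $v$. Your first-step argument never used the choice of $v$, so $p\nmid|A_u|$ for all $u\in V(\Gamma)$. Hence every $A$-orbit $u^A$ has length $|A|/|A_u|$, whose $p$-part is exactly $|A|_p$. These orbits partition $V(\Gamma)$, so $|A|_p$ divides $t|G|$, and therefore divides $|N|$ because $p\nmid t$.

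The paper makes the same point differently. For any $p$-subgroup $K\le A$, a nontrivial stabilizer $K_u$ would contain an element of order $p$ fixing $u$, which your first step rules out. So $K$ is semiregular on $V(\Gamma)$, all its orbits have length $|K|$, and hence $|K|$ divides $t|G|$.

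With either fix, the rest of your proof goes through unchanged.
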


\begin{proof}
We first prove that every $p$-subgroup of $\Aut\Gamma$ is semiregular on $V(\Gamma)$. Let $K$ be a $p$-subgroup of $\Aut\Gamma$. Suppose, to the contrary, that $K_v\neq1$ for some $v\in V(\Gamma)$. Since $K_v$ is a nontrivial $p$-group, it contains an element $q$ of order $p$. Every orbit of $\langle q\rangle$ on the set of the $d$ darts beginning at $v$ has cardinality either $1$ or $p$. Since $d<p$, every such orbit has cardinality $1$. Thus $q$ fixes every dart beginning at $v$, and hence fixes the terminal vertex of each such dart. The same argument shows that whenever $q$ fixes a vertex $u$, it fixes every dart beginning at $u$ and the terminal vertices of these darts. Since $\Gamma$ is connected, induction on the distance from $v$ shows that $q$ fixes every vertex and every dart of $\Gamma$. Hence $q=1$, a contradiction. Therefore $K_v=1$ for every $v\in V(\Gamma)$, and $K$ is semiregular on $V(\Gamma)$.

Every semiregular subgroup of $\Aut\Gamma$ has order dividing $|V(\Gamma)|=t|G|$. Hence the order of every $p$-subgroup of $\Aut\Gamma$ divides the $p$-part of $t|G|$. Since $p\nmid t$ and $N\in\Syl_p(G)$, this $p$-part is $|N|$. Moreover, the injectivity of $\R_\Gamma$ gives $|\R_\Gamma(N)|=|N|$. Therefore, $\R_\Gamma(N)\in\Syl_p(\Aut\Gamma)$. 

Now let $\Gamma'$ be as in the statement, and let $h\colon\Gamma\to\Gamma'$ be an isomorphism. By the first assertion of this proposition, $\R_\Gamma(N)\in\Syl_p(\Aut\Gamma)$ and $\R_{\Gamma'}(N)\in\Syl_p(\Aut\Gamma')$. Hence $h^{-1}\R_\Gamma(N)h$ and $\R_{\Gamma'}(N)$ are Sylow $p$-subgroups of $\Aut\Gamma'$.  They are therefore conjugate in $\Aut\Gamma'$. By the observation following Lemma~$\ref{lem:quotient-alignment}$, there exists an isomorphism $f\colon\Gamma\to\Gamma'$ satisfying $f^{-1}\R_\Gamma(N)f=\R_{\Gamma'}(N)$.
\end{proof}

\subsection{Lifting isomorphisms between quotient graphs}\label{subsec:quotient-lifting}

For the quotient projections $\wp_P$ and $\wp_{P'}$ chosen as in Subsection~\ref{subsec:quotient-selection}, we have $\Gamma\cong\Gamma'$ if and only if there exists an isomorphism $\varphi\colon\Gamma_P\to\Gamma'_{P'}$ that lifts along $\wp_P$ and $\wp_{P'}$ to an isomorphism from $\Gamma$ to $\Gamma'$. Thus Step~\ref{step:intro-lifting} reduces to determining which isomorphisms between the quotient graphs lift.

The lifting criterion of~\cite[Corollary~3.3]{MMP2004} is formulated for two coverings of the same base graph, whereas here the base graphs $\Gamma_P$ and $\Gamma'_{P'}$ are merely isomorphic. To reduce to the common-base setting, we use the standard pullback construction.

Let $\varphi\colon X\to Y$ be an isomorphism, and let $\eta\colon D(Y)\to M$ be a voltage assignment. The \emph{pullback} of $\eta$ along $\varphi$ is the voltage assignment
\[
\varphi^*\eta\colon D(X)\longrightarrow M,\ \ (\varphi^*\eta)(d)=\eta(d^\varphi).
\]
Define
\[
\widehat\varphi\colon X\times_{\varphi^*\eta}M\longrightarrow Y\times_\eta M,\ \ (x,a)^{\widehat\varphi}=(x^\varphi,a)
\]
for $x\in V(X)\cup D(X)$ and $a\in M$. Then $\widehat\varphi$ is an isomorphism, and
\[
\wp_\eta\bigl((x,a)^{\widehat\varphi}\bigr)=x^\varphi=\bigl(\wp_{\varphi^*\eta}(x,a)\bigr)^\varphi
\]
for every $x\in V(X)\cup D(X)$ and $a\in M$. Therefore
\[
(\varphi,\widehat\varphi)\colon\wp_{\varphi^*\eta}\longrightarrow\wp_\eta
\]
is an isomorphism of covering projections.

\begin{proposition}\label{prop:cover-comparison}
Let
\[
\wp_\zeta\colon X\times_\zeta N\longrightarrow X\ \text{ and }\ \wp_\eta\colon Y\times_\eta M\longrightarrow Y
\]
be connected derived covering projections, and let $\varphi\colon X\to Y$ be an isomorphism. Then there exists an isomorphism of covering projections $(\varphi,\widetilde\varphi)\colon\wp_\zeta\longrightarrow\wp_\eta$ if and only if, for some $v\in V(X)$, there is a group isomorphism $\alpha\colon N\to M$ such that
\begin{equation}\label{eq:local-voltage-comparison}
\bigl(\zeta(W)\bigr)^\alpha=\eta(W^\varphi)
\end{equation}
for every closed walk $W$ in $X$ based at $v$.
\end{proposition}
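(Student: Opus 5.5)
The plan is to reduce to the common-base setting via the pullback and then apply the standard lifting criterion for two derived coverings of one base graph. By the discussion preceding the statement, $(\varphi,\widehat\varphi)\colon\wp_{\varphi^*\eta}\to\wp_\eta$ is an isomorphism of covering projections. Since isomorphisms of covering projections compose and invert, there is an isomorphism $(\varphi,\widetilde\varphi)\colon\wp_\zeta\to\wp_\eta$ if and only if there is an isomorphism $(\id_X,\psi)\colon\wp_\zeta\to\wp_{\varphi^*\eta}$, that is, if and only if $\wp_\zeta$ and $\wp_{\varphi^*\eta}$ are equivalent. Given $\widetilde\varphi$, take $\psi=\widetilde\varphi\widehat\varphi^{-1}$; given $\psi$, take $\widetilde\varphi=\psi\widehat\varphi$. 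Moreover $(\varphi^*\eta)(W)=\eta(W^\varphi)$ for every walk $W$, directly from the definition of the pullback. So condition~\eqref{eq:local-voltage-comparison} says precisely that $\alpha\circ\zeta$ and $\varphi^*\eta$ agree on closed walks at $v$. Connectedness of $\wp_{\varphi^*\eta}$ follows from that of $\wp_\eta$, because $\widehat\varphi$ is an isomorphism.

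It remains to prove the common-base statement. For connected derived coverings $\wp_\zeta$ and $\wp_\xi$ of a connected graph $X$, with $\xi\colon D(X)\to M$, I claim they are equivalent if and only if there exist $v$ and a group isomorphism $\alpha\colon N\to M$ with $\zeta(W)^\alpha=\xi(W)$ for all closed walks $W$ at $v$.

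For sufficiency, fix for each vertex $u$ a walk $Q_u$ from $v$ to $u$, with $Q_v$ trivial. Define
\[
\psi(u,n)=\bigl(u,\ \xi(Q_u)^{-1}(\zeta(Q_u)n)^\alpha\bigr),
\]
and define $\psi$ on darts $(d,n)$ with $\Beg d=u$ by the same formula. The map $\psi$ commutes with projection and preserves $\Beg$ by construction. To check that it commutes with $\Inv$, compare $\psi(d^{-1},n\zeta(d))$ with $\Inv\psi(d,n)$ for $d\colon u\to w$. This reduces to the identity
\[
\zeta(Q_u dQ_w^{-1})^\alpha=\xi(Q_u dQ_w^{-1}),
\]
which holds because $Q_udQ_w^{-1}$ is closed at $v$. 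Since $\alpha$ is a bijection, $\psi$ is bijective.

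For necessity, let $(\id,\psi)$ be an equivalence. Over $v$, write $\psi(v,1)=(v,m_0)$. Because fibres are preserved, the darts of $X\times_\zeta N$ are determined by their initial vertex and projection. Hence $\psi$ carries the unique lift of a walk $W$ starting at $(v,n)$ to the unique lift of $W$ starting at $\psi(v,n)$. The lift of a closed $W$ from $(v,1)$ ends at $(v,\zeta(W))$, and the lift from $(v,m_0)$ ends at $(v,m_0\xi(W))$. Since the covering is connected, every $n\in N$ is $\zeta(W)$ for some closed walk $W$ at $v$. Define $\alpha$ by
\[
\psi(v,n)=(v,m_0 n^\alpha).
\]
Then $\alpha$ is bijective and satisfies $\zeta(W)^\alpha=\xi(W)$ for all closed walks $W$ at $v$. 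It is a homomorphism because voltages of closed walks at $v$ are multiplicative under concatenation, and every element of $N$ is realised by such a walk, which gives $(n_1n_2)^\alpha=n_1^\alpha n_2^\alpha$. Finally, "for some $v$" is harmless in both directions: sufficiency uses only the given $v$, and necessity produces $\alpha$ at any chosen $v$.

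The main obstacle is the well-definedness and homomorphism property of $\alpha$ in the necessity direction. Here one must use that $\zeta(W)$ determines $\xi(W)$: two closed walks with equal $\zeta$-voltage lift from $(v,1)$ to walks with a common endpoint, and their images under $\psi$ then force equal $\xi$-voltages. Alternatively, one may simply cite \cite[Section~2]{MMP2004} for this common-base equivalence criterion, so that the proposition becomes a direct consequence of the pullback reduction in the first paragraph.
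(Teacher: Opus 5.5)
Your pullback reduction is exactly the paper's proof. The paper uses the same two-square diagram with $\widehat\varphi$, reduces to equivalence of $\wp_\zeta$ and $\wp_{\varphi^*\eta}$ over $X$, and then cites \cite[Corollary~3.3(b)]{MMP2004}. So your ``alternatively, cite'' route is the paper's argument, and your necessity direction of the common-base criterion is also correct.

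Your self-contained sufficiency argument, however, fails as written.
\begin{itemize}
\item Your map $\psi(u,n)=\bigl(u,\xi(Q_u)^{-1}(\zeta(Q_u)n)^\alpha\bigr)$ fixes $(v,1)$. It must therefore send the lift of $Q_u$ from $(v,1)$, which ends at $(u,\zeta(Q_u))$, to the lift of $Q_u$ from $(v,1)$ in $X\times_\xi M$, which ends at $(u,\xi(Q_u))$.
\item Your formula instead sends $(u,\zeta(Q_u))$ to $\bigl(u,\xi(Q_u)^{-1}(\zeta(Q_u)^2)^\alpha\bigr)$. So $\psi$ does not commute with $\Inv$ in general, even for abelian groups.
\item A concrete failure: take two parallel links $d_1,d_2\colon v\to w$ over $\Z_3$, with $\zeta(d_1)=0$, $\zeta(d_2)=1$, $\xi(d_1)=1$, $\xi(d_2)=2$, $\alpha=\id$ and $Q_w=(d_1)$. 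Then $\Inv\psi(d_1,0)=(d_1^{-1},1)$ but $\psi(d_1^{-1},0)=(d_1^{-1},2)$.
\item Consequently the $\Inv$-check does not reduce to the closed-walk identity you state. For your formula it reduces to a relation among $\zeta(Q_u),\zeta(d),\zeta(Q_w)$ that is not the voltage of a closed walk.
\end{itemize}

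The correction term is inverted and sits on the wrong side of $n$. The definition forced by requiring that the lift of $Q_u$ from $(v,m)$ go to the lift from $(v,m^\alpha)$ is
\[
\psi(u,m)=\bigl(u,(m\,\zeta(Q_u)^{-1})^\alpha\,\xi(Q_u)\bigr),
\]
and the same formula on darts beginning at $u$. With this formula, comparing $\psi(d^{-1},m\zeta(d))$ with $\Inv\psi(d,m)$ for $d\colon u\to w$ does reduce exactly to $\zeta(Q_udQ_w^{-1})^\alpha=\xi(Q_udQ_w^{-1})$. The rest of your argument then goes through.
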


\begin{proof}
Consider the following diagram.
\begingroup
\setlength{\abovedisplayskip}{0.5\baselineskip}
\setlength{\belowdisplayskip}{0.7\baselineskip}
\[
\begin{CD}
X\times_\zeta N
@>{\widetilde\psi}>>
X\times_{\varphi^*\eta}M
@>{\widehat\varphi}>>
Y\times_\eta M\\
@V{\wp_\zeta}VV
@V{\wp_{\varphi^*\eta}}VV
@V{\wp_\eta}VV\\
X
@>{\id_X}>>
X
@>{\varphi}>>
Y.
\end{CD}
\]
\endgroup
Since $\widehat\varphi$ is an isomorphism and the right-hand square commutes, an isomorphism of covering projections $(\varphi,\widetilde\varphi)\colon\wp_\zeta\to\wp_\eta$ exists if and only if there exists an isomorphism $\widetilde\psi$ making the left-hand square commute. By definition, the latter condition holds if and only if $\wp_\zeta$ and $\wp_{\varphi^*\eta}$ are equivalent covering projections over $X$. By \cite[Corollary~3.3(b)]{MMP2004}, this is equivalent to the existence of a vertex $v\in V(X)$ and a group isomorphism $\alpha\colon N\to M$ such that, for every closed walk $W$ in $X$ based at $v$,
\[
\bigl(\zeta(W)\bigr)^\alpha=(\varphi^*\eta)(W)=\eta(W^\varphi).\qedhere
\]
\end{proof}

Proposition~\ref{prop:cover-comparison} characterizes liftability in terms of the voltages of closed walks. When $N$ and $M$ are elementary abelian $p$-groups, the voltages of closed walks depend only on their classes in first homology over $\F_p$, as described in Subsection~\ref{subsec:ordinary-voltages}. For an isomorphism $\varphi\colon X\to Y$, let
\[
\varphi_*\colon H_1(X;\F_p)\longrightarrow H_1(Y;\F_p)
\]
be the induced $\F_p$-linear isomorphism, defined by $[W]^{\varphi_*}=[W^\varphi]$ for every closed walk $W$ in $X$. The following corollary reformulates the lifting condition in terms of linear maps on first homology.

\begin{corollary}\label{cor:homology-comparison}
In the setting of Proposition~$\ref{prop:cover-comparison}$, suppose further that $N$ and $M$ are elementary abelian $p$-groups. Let
\[
\overline\zeta\colon H_1(X;\F_p)\longrightarrow N\ \text{ and }\ \overline\eta\colon H_1(Y;\F_p)\longrightarrow M
\]
be the $\F_p$-linear maps induced by the voltage assignments $\zeta$ and $\eta$, respectively, as defined in Subsection~$\ref{subsec:ordinary-voltages}$. Then the isomorphism $\varphi$ lifts along $\wp_\zeta$ and $\wp_\eta$ if and only if there exists an $\F_p$-linear isomorphism $\alpha\colon N\to M$ such that
\begin{equation}\label{eq:homology-comparison}
\bigl(\overline\zeta(c)\bigr)^\alpha=\overline\eta\bigl(c^{\varphi_*}\bigr)
\end{equation}
for every $c\in H_1(X;\F_p)$.
Moreover, if such an $\alpha$ exists, then it is unique.
\end{corollary}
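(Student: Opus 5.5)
The plan is to reduce Corollary~\ref{cor:homology-comparison} to Proposition~\ref{prop:cover-comparison}. The only additional input is that, for elementary abelian voltage groups, voltages of closed walks factor through first homology over $\F_p$. Fix a vertex $v\in V(X)$. Since $X$ is connected, every class $c\in H_1(X;\F_p)$ is represented by a closed walk based at $v$; I will check this against the combinatorial description of homology via fundamental closed walks $C_i$. The point is that each $C_i$ can be conjugated by a tree path from $v$ to a closed walk at $v$ with the same class. Homology classes add under concatenation, and scalar multiples in $\F_p$ come from repetition. For $p=2$ the semiedge classes are handled in the same way.

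For the forward direction, suppose $\varphi$ lifts. Proposition~\ref{prop:cover-comparison} then gives a vertex $v$ and an isomorphism $\alpha\colon N\to M$ with $\zeta(W)^\alpha=\eta(W^\varphi)$ for all closed walks $W$ at $v$. Any group isomorphism between elementary abelian $p$-groups is $\F_p$-linear. For $c=[W]$ with $W$ based at $v$ we have $\overline\zeta(c)=\zeta(W)$ and $\overline\eta(c^{\varphi_*})=\eta(W^\varphi)$, since $c^{\varphi_*}=[W^\varphi]$. Because every class has such a representative, \eqref{eq:homology-comparison} holds for all $c$.

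For the converse, assume \eqref{eq:homology-comparison} holds for all $c$. Then for every closed walk $W$ at an arbitrary fixed $v$ we get $\zeta(W)^\alpha=\overline\zeta([W])^\alpha=\overline\eta([W^\varphi])=\eta(W^\varphi)$. Here I use that $W^\varphi$ is a closed walk in $Y$ whose class is $[W]^{\varphi_*}$. Proposition~\ref{prop:cover-comparison} then yields the lift.

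Uniqueness follows from connectedness. Since $\wp_\zeta$ is connected, $\overline\zeta$ is surjective (Subsection~\ref{subsec:ordinary-voltages}). Hence every $n\in N$ equals $\overline\zeta(c)$ for some $c$, and \eqref{eq:homology-comparison} forces $n^\alpha=\overline\eta(c^{\varphi_*})$. So $\alpha$ is determined by $\varphi$, $\zeta$, and $\eta$.

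The main obstacle is the bookkeeping in the first step. I need that every homology class is realized by a closed walk at a fixed base vertex, and that $\varphi_*$ is well defined on classes. The $p=2$ semiedge convention needs particular care: a semiedge dart traversed once gives a closed walk whose class is the added basis element. I will verify this directly from the basis description rather than rely on abstract homology.
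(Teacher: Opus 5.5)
Your proposal is correct and follows the paper's route. You use the two facts that group isomorphisms between elementary abelian $p$-groups are $\F_p$-linear and that every class in $H_1(X;\F_p)$ is represented by a closed walk at any prescribed base vertex; with these you convert the closed-walk condition of Proposition~\ref{prop:cover-comparison} into the homology condition, and you get uniqueness of $\alpha$ from the surjectivity of $\overline\zeta$. Your extra bookkeeping only supplies detail for the representability fact, which the paper states without proof: conjugating fundamental closed walks by tree paths, and handling the semiedge classes when $p=2$.
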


\begin{proof}
Every group isomorphism from $N$ to $M$ is automatically $\F_p$-linear. If $W$ is a closed walk in $X$, then
\[
\zeta(W)=\overline\zeta([W])\ \text{ and }\ \eta(W^\varphi)=\overline\eta([W^\varphi])=\overline\eta\bigl([W]^{\varphi_*}\bigr).
\]
Since $X$ is connected, every class in $H_1(X;\F_p)$ can be represented by a closed walk based at any prescribed vertex of $X$. Therefore, the condition in Proposition~$\ref{prop:cover-comparison}$ is equivalent to \eqref{eq:homology-comparison}.

Since $\wp_\zeta$ is connected, $\overline\zeta$ is surjective by Subsection~$\ref{subsec:ordinary-voltages}$. Suppose that $\alpha$ and $\beta$ both satisfy \eqref{eq:homology-comparison}, and let $n\in N$. Choose $c\in H_1(X;\F_p)$ such that $\overline\zeta(c)=n$. Then
\[
n^\alpha=\bigl(\overline\zeta(c)\bigr)^\alpha
=\overline\eta\bigl(c^{\varphi_*}\bigr)
=\bigl(\overline\zeta(c)\bigr)^\beta
=n^\beta.
\]
Since $n$ was arbitrary, $\alpha=\beta$.
\end{proof}

We now return to the setting of $t$-Cayley graphs. Combining the quotient-selection results of Subsection~\ref{subsec:quotient-selection} with Corollary~\ref{cor:homology-comparison} gives the following quotient--lifting test for graph isomorphism.

\begin{corollary}\label{cor:quotient-lifting-test}
Let $\Gamma$ and $\Gamma'$ be connected $d$-valent $t$-Cayley graphs over a group $G$. Let $p>d$ be a prime with $p\nmid t$, and suppose that $N\in\Syl_p(G)$ satisfies $N\cong C_p^r$ for some $r\geq 1$. Put
\[
P=\R_\Gamma(N),\ \ P'=\R_{\Gamma'}(N),\ \ X=\Gamma_P,\ \ Y=\Gamma'_{P'}.
\]
Choose $N$-voltage descriptions $\zeta\colon D(X)\to N$ and $\eta\colon D(Y)\to N$ of $\wp_P$ and $\wp_{P'}$, respectively, and let $\overline\zeta$ and $\overline\eta$ be the induced maps on first homology. Then $\Gamma\cong\Gamma'$ if and only if there exist an isomorphism $\varphi\colon X\to Y$ and an $\F_p$-linear automorphism $\alpha$ of $N$ such that
\[
\bigl(\overline\zeta(c)\bigr)^\alpha=\overline\eta\bigl(c^{\varphi_*}\bigr)
\]
for every $c\in H_1(X;\F_p)$.
\end{corollary}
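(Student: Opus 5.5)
We combine Proposition~\ref{prop:sylow-alignment}, Lemma~\ref{lem:quotient-alignment}, Proposition~\ref{prop:voltage-description} and Corollary~\ref{cor:homology-comparison}, and prove the two directions separately. First, the setup. By Definition~\ref{def:t-Cayley}, $\R_\Gamma(N)$ is semiregular on $V(\Gamma)$, and so is $\R_{\Gamma'}(N)$ on $V(\Gamma')$. Hence by Proposition~\ref{prop:quotient-regular-cover}, $\wp_P$ and $\wp_{P'}$ are connected regular covering projections with covering transformation groups $P\cong N$ and $P'\cong N$. Here we use the injectivity of $\R_\Gamma$ and $\R_{\Gamma'}$, and connectivity of the covers follows from that of $\Gamma$ and $\Gamma'$. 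So the voltage descriptions $\zeta,\eta$ exist by Proposition~\ref{prop:voltage-description}. This gives equivalences $(\id_X,\psi)\colon\wp_\zeta\to\wp_P$ and $(\id_Y,\psi')\colon\wp_\eta\to\wp_{P'}$. In particular $\wp_\zeta$ and $\wp_\eta$ are connected derived coverings with $N\cong C_p^r$ elementary abelian, so Corollary~\ref{cor:homology-comparison} applies to them with $M=N$.

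For the forward direction, suppose $\Gamma\cong\Gamma'$. Proposition~\ref{prop:sylow-alignment} applies, since $p>d$ and $p\nmid t$, and gives an isomorphism $f$ with $f^{-1}Pf=P'$. Lemma~\ref{lem:quotient-alignment} then yields an isomorphism of covering projections $(\overline f,f)\colon\wp_P\to\wp_{P'}$. Composing with the equivalences above gives
\[
(\overline f,\ \psi f\psi'^{-1})\colon\wp_\zeta\to\wp_\eta ,
\]
since each square commutes and inverses of equivalences are equivalences. So $\varphi=\overline f$ lifts along $\wp_\zeta$ and $\wp_\eta$. Corollary~\ref{cor:homology-comparison} then provides an $\F_p$-linear isomorphism $\alpha\colon N\to N$, that is, an automorphism of $N$, satisfying the required identity.

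For the converse, suppose $\varphi$ and $\alpha$ are given. Corollary~\ref{cor:homology-comparison} gives a lift $(\varphi,\widetilde\varphi)\colon\wp_\zeta\to\wp_\eta$. Then $\psi^{-1}\widetilde\varphi\psi'$ is a graph isomorphism from $\Gamma$ to $\Gamma'$. Here we use that the total spaces of $\wp_P$ and $\wp_{P'}$ are $\Gamma$ and $\Gamma'$ themselves.

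The only delicate point is bookkeeping: checking that composing an isomorphism of covering projections with equivalences is again an isomorphism of covering projections, with the right-action convention. This is the step to verify carefully, but it is a direct diagram chase using $\wp_{P'}(x^{\psi'})=\wp_\eta(x)$ and the analogous identity for $\psi$.
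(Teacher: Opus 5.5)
Your proposal is correct and follows essentially the same route as the paper's proof. In the forward direction you align via Proposition~\ref{prop:sylow-alignment} and Lemma~\ref{lem:quotient-alignment}, transport the lift through the voltage-description equivalences as $\psi f\psi'^{-1}$, and apply Corollary~\ref{cor:homology-comparison}; in the converse you pull the lift back as $\psi^{-1}\widetilde\varphi\psi'$, exactly as the paper does.
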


\begin{proof}
Let $(\id_X,\psi)\colon\wp_\zeta\to\wp_P$ and $(\id_Y,\psi')\colon\wp_\eta\to\wp_{P'}$ be the equivalences of covering projections given by the voltage descriptions $\zeta$ and $\eta$.

Suppose first that $\Gamma\cong\Gamma'$. By Proposition~$\ref{prop:sylow-alignment}$, there exists an isomorphism $f\colon\Gamma\to\Gamma'$ such that $f^{-1}Pf=P'$. By Lemma~$\ref{lem:quotient-alignment}$, $f$ induces an isomorphism $\varphi\colon X\to Y$ such that $(\varphi,f)\colon\wp_P\to\wp_{P'}$ is an isomorphism of covering projections. For every vertex or dart $x$ of $X\times_\zeta N$, we have
\[
\wp_\eta\bigl(x^{\psi f\psi'^{-1}}\bigr)=\wp_{P'}\bigl(x^{\psi f}\bigr)=\bigl(\wp_P(x^\psi)\bigr)^{\varphi}=\bigl(\wp_\zeta(x)\bigr)^{\varphi}.
\]
Hence $(\varphi,\psi f\psi'^{-1})\colon\wp_\zeta\to\wp_\eta$ is an isomorphism of covering projections, that is, $\varphi$ lifts along $\wp_\zeta$ and $\wp_\eta$. Corollary~$\ref{cor:homology-comparison}$ therefore gives an $\F_p$-linear automorphism $\alpha$ of $N$ satisfying the required equality.

Conversely, suppose that $\varphi$ and $\alpha$ satisfy the stated equality. By Corollary~$\ref{cor:homology-comparison}$, $\varphi$ lifts along $\wp_\zeta$ and $\wp_\eta$ to an isomorphism $\widetilde\varphi\colon X\times_\zeta N\to Y\times_\eta N$. Then $\psi^{-1}\widetilde\varphi\psi'$ is an isomorphism from $\Gamma$ to $\Gamma'$.
\end{proof}

\begin{remark}\label{rem:computational-lifting}
For fixed quotient graphs and fixed voltage assignments, Corollary~$\ref{cor:quotient-lifting-test}$ gives a finite test. In the setting of that corollary, let $(c_1,\ldots,c_\beta)$ be the basis of $H_1(X;\F_p)$ obtained from a spanning tree of $X$ as in Subsection~\ref{subsec:fundamental}, with $c_i=[C_i]$. Identify $N$ with $\F_p^r$ by choosing an ordered basis of $N$, and regard the elements of $N$ as column vectors; then every $\F_p$-linear automorphism $\alpha$ of $N$ is of the form $u\mapsto Au$ for a unique $A\in\GL(r,p)$. Put
\[
Z=
\begin{bmatrix}
\zeta(C_1)&\cdots&\zeta(C_\beta)
\end{bmatrix}
\in\F_p^{r\times\beta},
\]
and, for each isomorphism $\varphi\colon X\to Y$, put
\[
Z_\varphi'=
\begin{bmatrix}
\eta(C_1^\varphi)&\cdots&\eta(C_\beta^\varphi)
\end{bmatrix}
\in\F_p^{r\times\beta}.
\]
Since $c_1,\ldots,c_\beta$ form a basis of $H_1(X;\F_p)$, the equality in Corollary~$\ref{cor:quotient-lifting-test}$ holds for all $c$ if and only if it holds for $c_1,\ldots,c_\beta$, that is, if and only if $Z_\varphi'=AZ$. Hence, by Corollary~$\ref{cor:homology-comparison}$, $\varphi$ lifts along $\wp_\zeta$ and $\wp_\eta$ if and only if $Z_\varphi'=AZ$ for some $A\in\GL(r,p)$. Since $\overline\zeta$ is surjective, $\rank Z=r$. Choose $1\leq j_1<\cdots<j_r\leq\beta$ such that the submatrix $Z_0$ of $Z$ formed by the columns $j_1,\ldots,j_r$ is nonsingular, and let $Z'_{\varphi,0}$ be the submatrix of $Z'_\varphi$ formed by the same columns. Comparing these columns in $Z_\varphi'=AZ$ shows that such an $A$ must equal $Z'_{\varphi,0}Z_0^{-1}$. Consequently, $\varphi$ lifts along $\wp_\zeta$ and $\wp_\eta$ if and only if
\[
A_\varphi:=Z'_{\varphi,0}Z_0^{-1}\in\GL(r,p)
\quad\text{and}\quad
Z'_\varphi=A_\varphi Z.
\]
The isomorphisms $\varphi\colon X\to Y$ must be enumerated in the dart model, recording the image of every dart, because parallel links are distinct and $\eta(C_i^\varphi)$ depends on the dart sequence $C_i^\varphi$. In Section~\ref{sec:Q4p}, the quotient graphs and voltage assignments depend on parameters, and we carry out this test for all values of the parameters at once by means of integer matrices.
\end{remark}

\section{Tetravalent bi-Cayley graphs over \texorpdfstring{$Q_{4p}$}{Q4p}}
\label{sec:Q4p}

Throughout this section we use the notation $\BCay(G,S)$ for bi-Cayley graphs introduced in the Introduction, and we take $\R_\Gamma$ to be the right regular representation defined by
\[
(x,i)^{\R_\Gamma(g)}=(xg,i)\ \text{ for all }\ g,x\in G\ \text{ and }\ i\in\{0,1\}.
\]
Then $\R_\Gamma(G)$ acts semiregularly on $V(\Gamma)$ and has exactly the two orbits $G\times\{0\}$ and $G\times\{1\}$. Thus $\BCay(G,S)$ is an $|S|$-valent $2$-Cayley graph over $G$ in the sense of Definition~\ref{def:t-Cayley}.

We shall use the following description of the components of a bi-Cayley graph, whose elementary proof is omitted.

\begin{lemma}\label{lem:bcay-components}
Let $G$ be a finite group and let $S\subseteq G$ with $1\in S$. Then the connected components of $\BCay(G,S)$ are the subgraphs induced by the sets $\langle S\rangle x\times\{0,1\}$ with $x\in G$. Each of them has $2|\langle S\rangle|$ vertices and is isomorphic to $\BCay(\langle S\rangle,S)$. In particular, $\BCay(G,S)$ is connected if and only if $\langle S\rangle=G$.
\end{lemma}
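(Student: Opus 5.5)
The plan is to prove the two directions of Lemma~\ref{lem:bcay-components} directly from the definition of $\BCay(G,S)$, using only the edge rule $\{(x,0),(sx,1)\}$ and the hypothesis $1\in S$. Put $H=\langle S\rangle$.

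First, I check that each set $Hx\times\{0,1\}$ is a union of components. Every edge joins $(y,0)$ to $(sy,1)$ with $s\in S\subseteq H$, so both endpoints have first coordinate in the same right coset $Hy$. Hence no edge leaves $Hx\times\{0,1\}$.

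Second, I show that each $Hx\times\{0,1\}$ is connected. Since $1\in S$, the vertices $(y,0)$ and $(y,1)$ are adjacent, so the two layers over each $y$ are joined. Moreover, $(y,0)$ and $(sy,0)$ are connected through $(sy,1)$. Because $G$ is finite, $H$ is the set of finite products of elements of $S$, with no inverses needed. Iterating the previous observation therefore joins $(x,0)$ to $(hx,0)$ for every $h\in H$. It follows that the component of $(x,0)$ is exactly $Hx\times\{0,1\}$, which has $2|H|$ vertices.

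Third, I produce the isomorphism with $\BCay(H,S)$. The map $(h,i)\mapsto(hx,i)$ from $H\times\{0,1\}$ to $Hx\times\{0,1\}$ is a bijection. It sends the edge $\{(h,0),(sh,1)\}$ to $\{(hx,0),(shx,1)\}$, and conversely every edge in the coset arises this way. This right-multiplication map is therefore a graph isomorphism. The same works in the dart model: darts are indexed by pairs (vertex, element of $S$), and the map transports them compatibly.

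The connectivity criterion then follows at once, since there is a single component exactly when $H=G$. No step is a real obstacle. The only point needing care is the use of finiteness to avoid inverses of elements of $S$, together with the use of $1\in S$ to pass between the two layers.
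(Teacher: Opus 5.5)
Your proof is correct and complete. The paper omits the proof of this lemma as elementary, and your argument is exactly the routine verification it alludes to: edges stay inside $\langle S\rangle x\times\{0,1\}$, the edges coming from $1\in S$ join the two layers, left multiplication by elements of $S$ moves along a coset, and $(h,i)\mapsto(hx,i)$ gives the isomorphism with $\BCay(\langle S\rangle,S)$.

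As a minor aside, you do not need finiteness of $G$ to avoid inverses. Adjacency is symmetric, so joining $(y,0)$ to $(sy,0)$ for all $y$ also joins $(s^{-1}y,0)$ to $(y,0)$.
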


From now on, let
\[
Q=Q_{4p}=\langle a,b\mid a^{2p}=1,\ b^2=a^p,\ b^{-1}ab=a^{-1}\rangle,
\]
where $p\geq3$ is prime, and regard the exponents of $a$ as elements of $\Z_{2p}$. It is well known (see, for example,~\cite{XieZhang2026}) that $\Aut Q=\{\sigma_{r,t}\mid r\in\Z_{2p}^*,\ t\in\Z_{2p}\}$, where
\begin{equation}\label{eq:Q4p-automorphisms}
    (a^i)^{\sigma_{r,t}}=a^{ri}\ \text{ and }\ (ba^i)^{\sigma_{r,t}}=ba^{ri+t}\ \text{ for all }\ i\in\Z_{2p}.
\end{equation}

The main task in proving Theorem~\ref{thm:intro-Q4p-4BCI} is to determine the isomorphisms between bi-Cayley graphs whose connection sets have four elements, contain $1$, and are not contained in $\langle a\rangle$. We will see in Lemma~\ref{lem:Q4p-four-set-normal-form} that each such connection set is BCI-equivalent to a member of one of the two families introduced below. We first solve the isomorphism problem for these two families.

Define two parameter sets
\[
\mathcal P_C=\{(k,\ell)\in(\Z_{2p}\setminus\{0\})^2\mid k\ne\ell\}\ \text{ and }\ \mathcal P_M=(\Z_{2p}\setminus\{0\})^2.
\]
The \emph{$C$-family} and the \emph{$M$-family} consist of the connection sets
\[
C(k,\ell)=\{1,a^k,a^\ell,b\}\ \text{ with }\ (k,\ell)\in\mathcal P_C
\quad\text{and}\quad
M(k,\ell)=\{1,a^k,ba^\ell,b\}\ \text{ with }\ (k,\ell)\in\mathcal P_M,
\]
respectively, and we write $\Gamma_C(k,\ell)=\BCay(Q,C(k,\ell))$ and $\Gamma_M(k,\ell)=\BCay(Q,M(k,\ell))$. All these connection sets have cardinality $4$, so all these graphs are tetravalent.

Let $U=\Z_{2p}^*$ and let $S_3$ denote the symmetric group on $\{0,1,2\}$. For $(k,\ell)\in\mathcal P_C$, put
\begin{equation}\label{eq:e-and-x}
(e_0,e_1,e_2)=(0,k,\ell).
\end{equation}
To describe the parameter pairs related by the BCI-equivalences established in Lemma~\ref{lem:Q4p-evident-transformations} below, define
\begin{align*}
\mathcal O_C(k,\ell)&=\bigl\{r\bigl(e_{\rho(1)}-e_{\rho(0)},\ e_{\rho(2)}-e_{\rho(0)}\bigr)\bigm| r\in U,\ \rho\in S_3\bigr\},\\
\mathcal O_M(k,\ell)&=\bigl\{r(k,\ell),\ r(k,-\ell),\ r(\ell,k),\ r(\ell,-k)\bigm| r\in U\bigr\}.
\end{align*}
All coordinates in these sets are calculated modulo $2p$.

\begin{lemma}\label{lem:Q4p-evident-transformations}
The following statements hold.
\begin{enumerate}[label=\textup{(\roman*)}]
\item Let $(k,\ell),(k',\ell')\in\mathcal P_C$. If $(k',\ell')\in\mathcal O_C(k,\ell)$, then $C(k,\ell)$ and $C(k',\ell')$ are BCI-equivalent.
\item Let $(k,\ell),(k',\ell')\in\mathcal P_M$. If $(k',\ell')\in\mathcal O_M(k,\ell)$, then $M(k,\ell)$ and $M(k',\ell')$ are BCI-equivalent.
\end{enumerate}
\end{lemma}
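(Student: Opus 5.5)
It suffices to exhibit, for each generator of the relevant transformation set, an explicit pair $(g,\alpha)$ with $g\in Q$ and $\alpha\in\Aut Q$ such that the target connection set equals $gS^\alpha$. BCI-equivalence is an equivalence relation: $T=gS^\alpha$ gives $S=(g^{-1})^{\alpha^{-1}}T^{\alpha^{-1}}$, and compositions of such maps are again of this form. So I will verify the two statements on generators of the group actions describing $\mathcal O_C$ and $\mathcal O_M$, and then compose. Throughout I use the automorphisms $\sigma_{r,t}$ from \eqref{eq:Q4p-automorphisms} and the relations $ba^i=a^{-i}b$ and $b^2=a^p$. The parameter conditions in $\mathcal P_C$ and $\mathcal P_M$ are preserved at each step.

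For (i), the scaling $(k,\ell)\mapsto r(k,\ell)$ with $r\in U$ is realized by $\sigma_{r,0}$, which fixes $b$ and sends $C(k,\ell)$ to $C(rk,r\ell)$. For the $S_3$-part, note that $C(k,\ell)=\{a^{e_0},a^{e_1},a^{e_2},b\}$. Applying $\rho$ and recentering at $e_{\rho(0)}$ amounts to left multiplication by $g=a^{-e_{\rho(0)}}$ (which is all of the transformation when $\rho(0)=0$, i.e.\ a swap of $e_1,e_2$ up to relabeling). This gives $\{1,a^{e_{\rho(1)}-e_{\rho(0)}},a^{e_{\rho(2)}-e_{\rho(0)}},a^{-e_{\rho(0)}}b\}$. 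The last element $a^{-e_{\rho(0)}}b=ba^{e_{\rho(0)}}$ is not $b$, so I follow with $\sigma_{1,t}$ for $t=-e_{\rho(0)}$. This automorphism fixes powers of $a$ and sends $ba^{e_{\rho(0)}}$ to $b$. Composing with scaling by $r$ covers all of $\mathcal O_C(k,\ell)$; the set is unordered, so the order of the two middle elements is irrelevant.

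For (ii), scaling again uses $\sigma_{r,0}$, and swapping the roles is not needed for the first generator. For $(k,\ell)\mapsto(k,-\ell)$, left-multiply $M(k,\ell)$ by $b^{-1}$. Using $b^{-1}=a^pb=ba^{p}$, one gets $b^{-1}\cdot 1=ba^p$, $b^{-1}a^k=ba^{p+k}$, $b^{-1}ba^\ell=a^\ell$, and $b^{-1}b=1$. Hence $b^{-1}M(k,\ell)=\{1,a^\ell,ba^p,ba^{p+k}\}$, which is $\{1,a^\ell,ba^{p},ba^{p+k}\}$ and not yet of the required shape. Instead I will multiply on the left by $(ba^\ell)^{-1}$ and then apply a suitable $\sigma_{\pm1,t}$, choosing the one that returns a set $\{1,a^{k'},ba^{\ell'},b\}$. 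Concretely, left multiplication by a $b$-coset element exchanges the $\langle a\rangle$-part $\{1,a^k\}$ with the coset part $\{b,ba^\ell\}$. So the combination $g\in\{b^{-1},(ba^\ell)^{-1}\}$ followed by $\sigma_{\pm1,t}$ should produce $(\ell,\pm k)$. Likewise, $g=a^{-k}$ followed by $\sigma_{-1,t}$ with appropriate $t$ should give $(k,-\ell)$. The expected map for $a^{-k}$ is $\{a^{-k},1,a^{-k}ba^\ell,a^{-k}b\}=\{1,a^{-k},ba^{k+\ell},ba^k\}$. Applying $\sigma_{1,-k}$ gives $\{1,a^{-k},ba^\ell,b\}=M(-k,\ell)$, which is $-1$ times $(k,-\ell)$ and so lies in $\mathcal O_M$ after scaling.

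The only real obstacle is this bookkeeping in (ii): checking that the left translations by $b$-coset elements, composed with $\sigma_{r,t}$, produce exactly the swaps $(\ell,\pm k)$. I will compute $(ba^\ell)^{-1}M(k,\ell)$ explicitly via $ba^i=a^{-i}b$. Then I will pick $t$ so that one coset element becomes $b$, and $r=\pm1$ so that the remaining sign matches. Finally, I will confirm that together with scaling and $(k,\ell)\mapsto(-k,\ell)$ these generate all four forms listed in $\mathcal O_M(k,\ell)$.
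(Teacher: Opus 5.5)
Your framework is the paper's. Transitivity of BCI-equivalence together with $\sigma_{r,0}$ reduces everything to unscaled pairs, and each of these is realised by a left translation combined with some $\sigma_{\pm1,t}$. Part (i) is complete: it is the paper's computation with the two operations in the opposite order, which is harmless because $a^{-m}$ is fixed by $\sigma_{1,-m}$. In (ii), your identity $\bigl(a^{-k}M(k,\ell)\bigr)^{\sigma_{1,-k}}=M(-k,\ell)$ is correct, and after $\sigma_{-1,0}$ it gives $M(k,-\ell)$. The paper obtains this sign change in one step as $M(k,\ell)^{\sigma_{1,-\ell}}=M(k,-\ell)$.

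The gap is in (ii): you never verify the swap $(k,\ell)\mapsto(\ell,\pm k)$. This is the only part of $\mathcal O_M(k,\ell)$ that scalings and sign changes cannot reach, and you only assert that translation by $b^{-1}$ or $(ba^\ell)^{-1}$ followed by a suitable $\sigma_{\pm1,t}$ ``should produce'' it. As written, (ii) is therefore unproved for the pairs $r(\ell,\pm k)$.

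You were one line from finishing. You computed $b^{-1}M(k,\ell)=\{1,a^\ell,ba^p,ba^{p+k}\}$ and set it aside as not of the required shape. But $\sigma_{1,p}$ fixes $1$ and $a^\ell$ and sends $ba^p\mapsto b$ and $ba^{p+k}\mapsto ba^k$, so $\bigl(b^{-1}M(k,\ell)\bigr)^{\sigma_{1,p}}=M(\ell,k)$. Since $(b^{-1})^{\sigma_{1,p}}=(ba^p)^{\sigma_{1,p}}=b$, this is exactly the paper's identity $b\,M(k,\ell)^{\sigma_{1,p}}=M(\ell,k)$. Applying $\sigma_{1,-k}$ then gives $M(\ell,-k)$, and scaling covers all four forms.

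Your other candidate also works. Since $(ba^\ell)^{-1}=ba^{\ell+p}$, you get $\{1,a^{-\ell},ba^{\ell+p},ba^{\ell+p+k}\}$, which $\sigma_{1,-\ell-p}$ turns into $M(-\ell,k)$.

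Your opening sentence of (ii) assigns $b^{-1}$ to the sign change $(k,-\ell)$, which is wrong. Left translation by an element of the coset $b\langle a\rangle$ always exchanges the roles of the two parameters, so it can only produce the swap.
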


\begin{proof}
BCI-equivalence is an equivalence relation, and by~\eqref{eq:Q4p-automorphisms} the automorphism $\sigma_{r,0}$ multiplies both parameters of $C(k,\ell)$ and of $M(k,\ell)$ by $r\in U$. It therefore suffices to realise the unscaled pairs.

For~(i), let $\rho\in S_3$ and put $m=e_{\rho(0)}$. Using $a^{-m}b=ba^{m}$ we obtain
\[
a^{-m}\,C(k,\ell)^{\sigma_{1,-m}}=\bigl\{a^{e_0-m},a^{e_1-m},a^{e_2-m},b\bigr\}=C\bigl(e_{\rho(1)}-e_{\rho(0)},\ e_{\rho(2)}-e_{\rho(0)}\bigr),
\]
the last equality follows from $e_{\rho(0)}-m=0$ and $C(x,y)=C(y,x)$. As $e_0,e_1,e_2$ are pairwise distinct, the resulting pair lies in $\mathcal P_C$.

For~(ii), the four unscaled pairs are realised by
\[
M(k,\ell),\ \  M(k,\ell)^{\sigma_{1,-\ell}}=M(k,-\ell),\ \  b\,M(k,\ell)^{\sigma_{1,p}}=M(\ell,k),\ \ \bigl(b\,M(k,\ell)^{\sigma_{1,p}}\bigr)^{\sigma_{1,-k}}=M(\ell,-k).\qedhere
\]
\end{proof}

Thus membership in $\mathcal O_C(k,\ell)$ or $\mathcal O_M(k,\ell)$ is sufficient for the corresponding graphs to be isomorphic. The following proposition, the main technical result of this section, shows that it is also necessary, and that no graph of the $C$-family is isomorphic to a graph of the $M$-family.

\begin{proposition}\label{prop:Q4p-classification}
The following statements hold.
\begin{enumerate}[label=\textup{(\roman*)},ref=\textup{(\roman*)}]
\item\label{prop:Q4p-classification-C}
For $(k,\ell),(k',\ell')\in\mathcal P_C$, we have $\Gamma_C(k,\ell)\cong\Gamma_C(k',\ell')$ if and only if $(k',\ell')\in\mathcal O_C(k,\ell)$.

\item\label{prop:Q4p-classification-M}
For $(k,\ell),(k',\ell')\in\mathcal P_M$, we have $\Gamma_M(k,\ell)\cong\Gamma_M(k',\ell')$ if and only if $(k',\ell')\in\mathcal O_M(k,\ell)$.

\item\label{prop:Q4p-classification-CM}
For $(k,\ell)\in\mathcal P_C$ and $(k',\ell')\in\mathcal P_M$, we have $\Gamma_C(k,\ell)\not\cong\Gamma_M(k',\ell')$.
\end{enumerate}
Moreover, whenever two graphs in~\ref{prop:Q4p-classification-C} or~\ref{prop:Q4p-classification-M} are isomorphic, their connection sets are BCI-equivalent.
\end{proposition}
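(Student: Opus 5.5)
The ``if'' directions of~\ref{prop:Q4p-classification-C} and~\ref{prop:Q4p-classification-M} are Lemma~\ref{lem:Q4p-evident-transformations}, which also gives the final sentence of the proposition once the ``only if'' directions are known. So the plan concerns necessity in~\ref{prop:Q4p-classification-C} and~\ref{prop:Q4p-classification-M}, and statement~\ref{prop:Q4p-classification-CM}.

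\emph{Disconnected graphs.} By Lemma~\ref{lem:bcay-components} it suffices to know $\langle S\rangle$. Any subgroup containing $b$ and some $a^i$ with $i\notin\{0,p\}$ contains $a^{2}$ and $a^p=b^2$, hence $a$, so it is all of $Q$. The only proper subgroup containing $b$ is therefore $\langle b\rangle\cong C_4$. For $S\in\{C(k,\ell),M(k,\ell)\}$ with $\langle S\rangle\le\langle b\rangle$, the four elements must be exactly $\{1,a^p,b,ba^p\}$. This is impossible for the $C$-family, since it would force $k=\ell=p$. In the $M$-family it happens only for $M(p,p)$, and $\mathcal O_M(p,p)=\{(p,p)\}$. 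All other graphs are connected, and component sizes separate $\Gamma_M(p,p)$ from them. Hence we may assume that all graphs are connected.

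\emph{Case $p\ge5$.} Apply Corollary~\ref{cor:quotient-lifting-test} with $d=4<p$, $t=2$ and $N=\langle a^2\rangle\cong C_p$. Since $p$ is odd, $Q/N\cong C_4$, generated by the image of $b$, with $\bar a=\bar b^{\,2}$. The quotient $\Gamma_P$ is therefore a tetravalent bi-Cayley multigraph over $C_4$ whose connection multiset is the image of $S$. This image depends only on the parities of the parameters (and on the family), which gives the eight quotient graphs mentioned in the Introduction. Each has $8$ vertices and $16$ edges, so $\beta=9$. Using a fixed transversal of $N$ in $Q$, I would write down an explicit $N$-voltage description. Identifying $N$ with $\F_p$ via $a^2\mapsto 1$, the voltages of the fundamental closed walks are integer linear forms in $k,\ell$ reduced mod $p$, after writing $a^k=a^{k\bmod 2}\,(a^2)^{\lfloor\cdot\rfloor}$ with the inverse of $2$ mod $p$ absorbed. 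This gives $Z=Z(k,\ell)\in\F_p^{1\times 9}$.

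For~\ref{prop:Q4p-classification-CM} and for pairs of parameters with different parity types, I would first check that the corresponding quotient graphs are non-isomorphic, or at least that no isomorphism between them lifts. For each ordered pair of isomorphic quotient graphs, I would enumerate all dart-level isomorphisms $\varphi$; there are finitely many, independent of $p$. Since $r=1$, the lifting condition of Remark~\ref{rem:computational-lifting} reads $Z'_\varphi(k',\ell')=\lambda Z(k,\ell)$ for some $\lambda\in\F_p^*$. Eliminating $\lambda$ turns this into the vanishing of $2\times2$ minors, which are bilinear integer forms in $(k,\ell)$ and $(k',\ell')$. I would show that, for each $\varphi$, these equations force $(k',\ell')\equiv\lambda\cdot v(k,\ell)\pmod p$, where $v$ is one of the six (for $C$) or four (for $M$) linear maps appearing in $\mathcal O_C$ and $\mathcal O_M$. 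Combining this with the parity information carried by the quotient type, and using the Chinese remainder theorem to identify $\Z_{2p}^*$ with $\F_p^*$ (every element of $\Z_{2p}$ being determined by its residues mod $2$ and mod $p$), yields $(k',\ell')\in\mathcal O_C(k,\ell)$ or $\mathcal O_M(k,\ell)$. For~\ref{prop:Q4p-classification-CM}, the same computation should show that every $\varphi$ from a $C$-quotient to an $M$-quotient violates the condition for all parameter values, or that no such $\varphi$ exists.

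\emph{Case $p=3$.} Here $p<d+1$, so Proposition~\ref{prop:sylow-alignment} is unavailable. The finitely many graphs on $24$ vertices are settled by a direct isomorphism computation, as the Introduction indicates.

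\emph{Main obstacle.} I expect the hardest step to be the uniform-in-$p$ treatment of the minor equations. The determinants of the integer matrices involved may vanish modulo finitely many small primes, or the equations may admit spurious solutions with $\lambda$ not matching any orbit element, or with some coordinate degenerate (for example $k'\equiv0\pmod p$). I would first compute the integer gcds (Smith forms) of the relevant minors and list the exceptional primes. I would then check by hand, or by direct computation, that those primes are either $\le3$ or give only solutions already in the orbit.
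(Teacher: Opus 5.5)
Your overall route matches the paper's: treat $\Gamma_M(p,p)$ separately; for $p\ge5$ take the quotient by $\R(\langle a^2\rangle)$, which gives eight quotient types with $\beta=9$; enumerate quotient isomorphisms at dart level; get uniformity in $p$ from Smith forms; finish with CRT; and check $p=3$ by machine. But one planned step fails, and the parity bookkeeping is missing the condition that makes the CRT step work.

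\textbf{The gap: parity types.} You plan to show first that, for parameter pairs of different parity types, the quotient graphs are non-isomorphic or no isomorphism between them lifts. Within a family this is false, because the orbits mix parity types. For $p\ge5$, $a^{-1}C(2,1)^{\sigma_{1,-1}}=\{1,a,a^{-1},b\}=C(1,-1)$, so $\Gamma_C(2,1)\cong\Gamma_C(1,-1)$, although their quotient types are $(C,0,1)$ and $(C,1,1)$. Similarly, $\Gamma_M(k,\ell)\cong\Gamma_M(\ell,k)$ with types $(M,\epsilon_k,\epsilon_\ell)$ and $(M,\epsilon_\ell,\epsilon_k)$. So liftable isomorphisms $X_C^{0,1}\to X_C^{1,1}$ and $X_M^{0,1}\to X_M^{1,0}$ exist, and parity information cannot simply be ``combined'' at the end.

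What CRT actually needs is a coupling for each $\varphi$. The linear map $v$ produced by your mod-$p$ analysis, giving $(\kappa',\lambda')=q_0\,v(\kappa,\lambda)$, must also satisfy $(\epsilon_{k'},\epsilon_{\ell'})\equiv v(\epsilon_k,\epsilon_\ell)\pmod 2$. Only then do you get $(k',\ell')=r\,v(k,\ell)$ in $\Z_{2p}$ with $r$ odd and $r\equiv q_0\pmod p$. If the mod-$2$ relation held only for a different $v$, you would know $(k',\ell')$ modulo $p$ and modulo $2$ separately, but could not place it in $\mathcal O_C(k,\ell)$ or $\mathcal O_M(k,\ell)$. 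This coupling must be checked $\varphi$ by $\varphi$, even when $\tau=\tau'$. It is exactly the joint statement, with a common $\rho$ (respectively the matching swap), verified in Appendix~\ref{apdx:pge5}\ref{apdx:pge5-C} and~\ref{apdx:pge5-M}.

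\textbf{Your main obstacle.} The difficulty you anticipate comes largely from eliminating the scalar. Keep the scalar $q\in\F_p^*$ instead. The nine conditions $\zeta_{\Gamma'}(C_r^\varphi)=q\,\zeta_\Gamma(C_r)$ then become a single homogeneous system
\[
L_\varphi^{(p)}(\kappa',\lambda',q\kappa,q\lambda)^{\mathsf T}=0,
\]
which is \emph{linear} in the four unknowns $\kappa',\lambda',q\kappa,q\lambda$, with $L_\varphi\in\Z^{9\times4}$ independent of $p$, $k$, $\ell$. It then suffices to prove $\ker_\Q L_\varphi\le\ker_\Q D_\varphi$ over $\Q$, and to check that the nonzero Smith invariants of $L_\varphi$ are prime to $p$; they lie in $\{1,2,3\}$. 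Lemma~\ref{lem:kernel-mod-p} transfers the containment to $\F_p$ for all $p\ge5$ at once. Degenerate values such as $k'=p$ need no separate treatment. By contrast, your system of bilinear $2\times2$ minors is not a linear system in four unknowns, so no single Smith form governs it, and the exceptional-prime analysis you sketch has no clear formulation.
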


For $p\geq5$, we prove Proposition~\ref{prop:Q4p-classification} by Steps~\ref{step:intro-choose-quotients}--\ref{step:intro-lifting} of the Introduction. Step~\ref{step:intro-choose-quotients} is given by Proposition~\ref{prop:sylow-alignment} (see the paragraph after Notation~\ref{not:Q4p-quotient}). For Step~\ref{step:intro-quotient-isomorphisms}, Lemma~\ref{lem:Q4p-quotient-voltage} and Definition~\ref{def:quotient-type} identify the quotient graphs, Lemma~\ref{lem:Q4p-quotient-separation} shows that quotient graphs of the two families are not isomorphic, and the isomorphisms between quotient graphs of the same family are enumerated by computer (Appendix~\ref{apdx:pge5}). Step~\ref{step:intro-lifting} occupies Lemmas~\ref{lem:Q4p-common-tree}--\ref{lem:Q4p-recovery}. The case $p=3$ is settled in Appendix~\ref{apdx:p3}.

The following lemma allows us to restrict the remaining arguments to connected graphs.

\begin{lemma}\label{lem:Q4p-connectivity}
For every $(k,\ell)\in\mathcal P_C$, the graph $\Gamma_C(k,\ell)$ is connected. For $(k,\ell)\in\mathcal P_M$, the graph $\Gamma_M(k,\ell)$ is disconnected if and only if $k=\ell=p$.
\end{lemma}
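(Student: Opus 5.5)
By Lemma~\ref{lem:bcay-components}, each connection set considered here contains $1$, so it suffices to decide when the subgroup generated by the connection set is all of $Q$. The plan is to describe the subgroups of $Q=Q_{4p}$ that contain $b$ and then test each family against them. The subgroups of $Q$ containing $b$ have the form $\langle a^d, b\rangle$ for a divisor $d$ of $2p$. Since $b^2=a^p$, such a subgroup always contains $a^p$.

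For the $C$-family, $\langle C(k,\ell)\rangle=\langle a^k,a^\ell,b\rangle=\langle a^{\gcd(k,\ell,p)},b\rangle$, using $a^p=b^2$. Because $p$ is prime, this subgroup is proper only if $p\mid k$ and $p\mid \ell$. As $k,\ell\in\Z_{2p}\setminus\{0\}$, that forces $k=\ell=p$, which is excluded by $k\neq\ell$. Hence $\langle C(k,\ell)\rangle=Q$, and $\Gamma_C(k,\ell)$ is connected.

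For the $M$-family, $\langle M(k,\ell)\rangle$ contains $b$ and $ba^\ell$, and therefore contains $b^{-1}\cdot ba^\ell=a^\ell$. It follows that $\langle M(k,\ell)\rangle=\langle a^k,a^\ell,b\rangle$, the same subgroup as in the $C$-case. By the argument above, it is proper exactly when $p\mid k$ and $p\mid\ell$, that is, when $k=\ell=p$. In that case the subgroup is $\langle a^p,b\rangle=\langle b\rangle\cong C_4$. This subgroup is proper because $4<4p$, so $\Gamma_M(p,p)$ is disconnected.

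The only step needing care is checking that $\langle a^m,b\rangle$ equals $\langle a^{\gcd(m,p)},b\rangle$. This holds because the element $a^p=b^2$ lets the gcd be taken with $p$ rather than with $2p$. Everything else is routine.
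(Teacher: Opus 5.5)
Your proof is correct and takes essentially the same route as the paper. You reduce via Lemma~\ref{lem:bcay-components} to deciding when $\langle S\rangle=Q$, use $b^2=a^p$ to obtain $\langle a^{\gcd(k,\ell,p)}\rangle\le\langle S\rangle$, and note that $\gcd(k,\ell,p)=p$ forces $k=\ell=p$; in the $M$-case you recover $a^\ell=b^{-1}(ba^\ell)$ and observe that $M(p,p)$ generates $\langle b\rangle$ of order~$4$. Your opening description of the subgroups containing $b$ is true but not needed for the argument.
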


\begin{proof}
By Lemma~\ref{lem:bcay-components} it suffices to decide when $\langle S\rangle=Q$.

Assume first that $S=C(k,\ell)$. Then $\langle S\rangle$ contains $b$, hence also $b^2=a^p$, as well as $a^k$ and $a^\ell$. Thus $\langle a^p,a^k,a^\ell\rangle\leq\langle S\rangle$, and $\langle a^p,a^k,a^\ell\rangle=\langle a^c\rangle$ with $c=\gcd(p,k,\ell)\in\{1,p\}$. Here $c=p$ would force $k,\ell\in\{0,p\}$ and hence $k=\ell=p$, which is impossible for $(k,\ell)\in\mathcal P_C$; so $c=1$, $\langle a\rangle\leq\langle S\rangle$ and therefore $\langle S\rangle=Q$.

Assume now that $S=M(k,\ell)$. Then $\langle S\rangle$ contains $b$, $a^k$ and $a^\ell=b^{-1}(ba^\ell)$, so the same argument gives $\langle S\rangle=Q$ unless $k=\ell=p$. If $k=\ell=p$, then $M(p,p)=\{1,a^p,ba^p,b\}=\langle b\rangle$ has order $4$, and $\Gamma_M(p,p)$ is disconnected.
\end{proof}

We next introduce the notation for the quotient projections modulo the Sylow $p$-subgroup of $Q$ and their voltage assignments.

\begin{notation}\label{not:Q4p-quotient}
Suppose that $p\geq5$, and let $\Gamma=\Gamma_C(k,\ell)$ or $\Gamma_M(k,\ell)$ be connected. Put
\[
z=a^2,\ \ N=\langle z\rangle,\ \ P_\Gamma=\R_\Gamma(N).
\]
Then $N$ is the unique Sylow $p$-subgroup of $Q$ and has order $p$. We identify $N$ with the additive group of $\F_p$ by $z^t\mapsto t$. For $m\in\Z_{2p}$, let $\epsilon_m\in\{0,1\}$ denote the parity of $m$, and let $\widehat m\in\F_p$ be the element with $2\widehat m=\overline m$, where $\overline m\in\F_p$ is the image of $m$ under the natural homomorphism $\Z_{2p}\to\F_p$. Then
\[
a^m=b^{2\epsilon_m}z^{\widehat m};
\]
indeed, if $m=2j$ is even, then $a^m=z^j$ and $2j=m$, whereas if $m$ is odd, then $m-p=2j$ is even, $a^m=a^pa^{2j}=b^2z^j$ and $2j\equiv m\pmod p$. Every element of $Q$ has a unique expression $b^iz^t$ with $i\in\Z_4$ and $t\in\F_p$. For $i\in\Z_4$, let $u_i=(b^i,0)^{P_\Gamma}$ and $v_i=(b^i,1)^{P_\Gamma}$; these are vertices of the quotient graph $\Gamma_{P_\Gamma}$. List the connection set of $\Gamma$ as
\[
(s_1,s_2,s_3,s_4)=
\begin{cases}
(1,a^k,a^\ell,b),&\text{if }\Gamma=\Gamma_C(k,\ell),\\
(1,a^k,ba^\ell,b),&\text{if }\Gamma=\Gamma_M(k,\ell),
\end{cases}
\]
and, for $i\in\Z_4$ and $1\leq j\leq4$, let $d_i(j)$ denote the $P_\Gamma$-orbit of the dart $(b^i,0)\to(s_jb^i,1)$. Finally, put $\kappa=\widehat k$ and $\lambda=\widehat\ell$. When a second connected graph $\Gamma'=\Gamma_C(k',\ell')$ or $\Gamma_M(k',\ell')$ is considered simultaneously, we use the corresponding primed notation; in particular, $\kappa'=\widehat{k'}$ and $\lambda'=\widehat{\ell'}$.
\end{notation}

Under Notation~\ref{not:Q4p-quotient}, $\Gamma$ is a connected $4$-valent $2$-Cayley graph over $Q$, and $N\cong C_p$ is a Sylow $p$-subgroup of $Q$ with $p>4$ and $p\nmid2$. Hence Proposition~\ref{prop:sylow-alignment} and Corollary~\ref{cor:quotient-lifting-test} apply to any two such graphs. The hypothesis $p\geq5$ is used here and in Lemma~\ref{lem:Q4p-finite-computation}, where $p$ must not divide the Smith invariants $2$ and $3$.

\begin{lemma}\label{lem:Q4p-quotient-voltage}
Under Notation~$\ref{not:Q4p-quotient}$, put $X=\Gamma_{P_\Gamma}$. The quotient projection $\wp_{P_\Gamma}\colon\Gamma\to X$ admits an $\F_p$-voltage description $\zeta_\Gamma$. Its values on the darts $d_i(j)$ are listed in Table~$\ref{tab:Q4p-quotient-voltages}$, where all subscripts are in $\Z_4$, and $\zeta_\Gamma(d_i(j)^{-1})=-\zeta_\Gamma(d_i(j))$.
\end{lemma}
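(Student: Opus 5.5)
We prove the lemma by writing down an explicit section of the quotient projection and computing the voltages from it.

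\emph{Step 1 (a section of the projection).} Every element of $Q$ is uniquely $b^iz^t$ with $i\in\Z_4$ and $t\in\F_p$. Since $P_\Gamma=\R_\Gamma(N)$ acts by right multiplication by powers of $z$, the $P_\Gamma$-orbits on $V(\Gamma)$ are exactly
\[
(b^iN,0)=u_i\quad\text{and}\quad(b^iN,1)=v_i.
\]
Thus $X$ has the eight vertices $u_i,v_i$, and $(b^i,0)$, $(b^i,1)$ are orbit representatives. Every dart of $\Gamma$ is either $(x,0)\to(s_jx,1)$ or its inverse. Under $P_\Gamma$, the dart $(b^iz^t,0)\to(s_jb^iz^t,1)$ is the image of $(b^i,0)\to(s_jb^i,1)$ under $\R_\Gamma(z^t)$. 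Hence the darts of $X$ are exactly the $d_i(j)$ and their inverses. Each dart of $X$ has four preimages per vertex orbit, so the fibres are regular $N$-orbits, as Proposition~\ref{prop:quotient-regular-cover} guarantees.

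\emph{Step 2 (the voltage description).} Use the standard construction behind Proposition~\ref{prop:voltage-description}. Identify $(b^iz^t,\epsilon)$ with $(w,t)$, where $w$ is $u_i$ or $v_i$, and identify the dart $(b^iz^t,0)\to(s_jb^iz^t,1)$ with $(d_i(j),t)$. Now write $s_jb^i=b^{i'}z^{c}$ in normal form. Then the terminal vertex $(s_jb^iz^t,1)=(b^{i'}z^{c+t},1)$ corresponds to $(v_{i'},t+c)$. Defining
\[
\zeta_\Gamma(d_i(j))=c,\qquad \zeta_\Gamma(d_i(j)^{-1})=-c,
\]
gives an isomorphism $X\times_{\zeta_\Gamma}\F_p\to\Gamma$ commuting with the projections. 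This is the required equivalence of covering projections. In particular, $d_i(j)$ runs from $u_i$ to $v_{i'}$.

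\emph{Step 3 (computing the normal forms).} The remaining work is to compute $s_jb^i$ for each $s_j$, using $a^m=b^{2\epsilon_m}z^{\widehat m}$ and the relation $b^{-i}ab^i=a^{(-1)^i}$.
\begin{itemize}
\item For $s_j=1$: $s_jb^i=b^i$, so $i'=i$ and $c=0$.
\item For $s_j=a^m$ with $m\in\{k,\ell\}$: $a^mb^i=b^ia^{(-1)^im}=b^{i+2\epsilon_m}z^{(-1)^i\widehat m}$, using that $b^2$ is central. So $i'=i+2\epsilon_m$ and $c=(-1)^i\widehat m$, that is, $\pm\kappa$ or $\pm\lambda$.
\item For $s_j=b$: $bb^i=b^{i+1}$, so $c=0$.
\item For $s_j=ba^\ell$: $ba^\ell b^i=b^{i+1}a^{(-1)^i\ell}=b^{i+1+2\epsilon_\ell}z^{(-1)^i\lambda}$.
\end{itemize}
These values fill Table~\ref{tab:Q4p-quotient-voltages}.

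The only delicate point is bookkeeping, not an obstacle. The sign $(-1)^i$ and the parity shifts $2\epsilon_m$ must match the paper's convention for $\widehat m$: for odd $m$ we have $a^m=b^2z^{\widehat m}$, with $2\widehat m\equiv m\pmod p$. The inverse-dart rule then follows directly from the definition of the derived graph.
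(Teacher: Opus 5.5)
Your proposal is correct and follows essentially the same route as the paper. Both compute the normal form $s_jb^iz^t=b^{i'}z^{t+c}$ and use it to build the explicit equivalence $(u_i,t)\mapsto(b^iz^t,0)$, $(v_i,t)\mapsto(b^iz^t,1)$, $(d_i(j),t)\mapsto\bigl((b^iz^t,0)\to(s_jb^iz^t,1)\bigr)$ from $X\times_{\zeta_\Gamma}\F_p$ to $\Gamma$, where the key check is that the terminal vertex of $(d_i(j),t)$ is sent to $(v_{i'},t+c)$. The only slip is your aside that each dart of $X$ has ``four preimages per vertex orbit'': the fibre over $d_i(j)$ has $p$ darts, one starting at each vertex of the orbit $u_i$, and nothing in your argument depends on that remark.
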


\begin{table}[!htb]
\normalfont
\centering
\begin{minipage}[t]{0.48\textwidth}
\centering
$\Gamma=\Gamma_C(k,\ell)$\par
\vspace{0.5em}
\begin{tabular}{@{}c|l|c@{}}
$s_j$&\multicolumn{1}{c|}{dart of $X$}&voltage $\zeta_\Gamma$\\ \hline
$1$&$d_i(1)\colon u_i\to v_i$&$0$\\
$a^k$&$d_i(2)\colon u_i\to v_{i+2\epsilon_k}$&$(-1)^i\kappa$\\
$a^\ell$&$d_i(3)\colon u_i\to v_{i+2\epsilon_\ell}$&$(-1)^i\lambda$\\
$b$&$d_i(4)\colon u_i\to v_{i+1}$&$0$
\end{tabular}
\end{minipage}
\hfill
\begin{minipage}[t]{0.48\textwidth}
\centering
$\Gamma=\Gamma_M(k,\ell)$\par
\vspace{0.5em}
\begin{tabular}{@{}c|l|c@{}}
$s_j$&\multicolumn{1}{c|}{dart of $X$}&voltage $\zeta_\Gamma$\\ \hline
$1$&$d_i(1)\colon u_i\to v_i$&$0$\\
$a^k$&$d_i(2)\colon u_i\to v_{i+2\epsilon_k}$&$(-1)^i\kappa$\\
$ba^\ell$&$d_i(3)\colon u_i\to v_{i+1+2\epsilon_\ell}$&$(-1)^i\lambda$\\
$b$&$d_i(4)\colon u_i\to v_{i+1}$&$0$
\end{tabular}
\end{minipage}
\par\vspace{0.8em}
\caption{The quotient darts and their $\zeta_\Gamma$-voltages.}
\label{tab:Q4p-quotient-voltages}
\end{table}

\begin{proof}
Since $z^tb^i=b^iz^{(-1)^it}$ and $a^m=b^{2\epsilon_m}z^{\widehat m}$, for all $m\in\Z_{2p}$, $i\in\Z_4$ and $t\in\F_p$ we obtain
\[
a^mb^iz^t=b^{i+2\epsilon_m}z^{\,t+(-1)^i\widehat m}\ \text{ and }\ ba^mb^iz^t=b^{i+1+2\epsilon_m}z^{\,t+(-1)^i\widehat m}.
\]
Hence, if a row of Table~\ref{tab:Q4p-quotient-voltages} lists the dart $d_i(j)\colon u_i\to v_{i'}$ with voltage $w$, then
\begin{equation}\label{eq:Q4p-dart-ends}
s_jb^iz^t=b^{i'}z^{\,t+w}\ \text{ for every }\ t\in\F_p;
\end{equation}
for $t=0$, this gives the incidence $d_i(j)\colon u_i\to v_{i'}$.

Since $\R_\Gamma(z^t)$ maps $(x,e)$ to $(xz^t,e)$, the orbit $u_i$ consists of the vertices $(b^iz^t,0)$, the orbit $v_i$ of the vertices $(b^iz^t,1)$, and the orbit $d_i(j)$ of the darts $(b^iz^t,0)\to(s_jb^iz^t,1)$, where $t$ runs through $\F_p$. As every element of $Q$ is uniquely of the form $b^iz^t$, the eight orbits $u_i,v_i$ are pairwise distinct and form $V(X)$, and the sixteen orbits $d_i(j)$ are pairwise distinct and, together with their inverses, form $D(X)$. Define $\psi\colon X\times_{\zeta_\Gamma}\F_p\to\Gamma$ by
\[
(u_i,t)^\psi=(b^iz^t,0),\quad (v_i,t)^\psi=(b^iz^t,1),\quad (d_i(j),t)^\psi=\bigl((b^iz^t,0)\to(s_jb^iz^t,1)\bigr),
\]
and let $\psi$ map the inverse $(d_i(j)^{-1},t+w)$ of $(d_i(j),t)$ to the inverse of $(d_i(j),t)^\psi$. By the description of the orbits, $\psi$ is bijective on vertices and on darts, and $(x,t)^\psi$ lies in the orbit $x$ for every vertex or dart $x$ of $X$. By construction, $\psi$ commutes with dart inversion and maps the initial vertex $(u_i,t)$ of $(d_i(j),t)$ to the initial vertex of $(d_i(j),t)^\psi$. By~\eqref{eq:Q4p-dart-ends}, it maps the initial vertex $(v_{i'},t+w)$ of $(d_i(j)^{-1},t+w)$ to $(s_jb^iz^t,1)$, which is the initial vertex of the inverse of $(d_i(j),t)^\psi$. Hence $\psi$ is an isomorphism, and $(\id_X,\psi)\colon\wp_{\zeta_\Gamma}\to\wp_{P_\Gamma}$ is an equivalence of covering projections. Therefore $\zeta_\Gamma$ is an $\F_p$-voltage description of $\wp_{P_\Gamma}$.
\end{proof}
 
We now carry out the test of Remark~\ref{rem:computational-lifting} for all graphs in Notation~\ref{not:Q4p-quotient} at once. We first list the quotient graphs and voltage assignments that occur. By Table~\ref{tab:Q4p-quotient-voltages}, the quotient graph depends on $F\in\{C,M\}$ and on $k,\ell$ only through the pair $(\epsilon_k,\epsilon_\ell)$.

\begin{definition}\label{def:quotient-type}
A \emph{quotient type} is a triple $\tau=(F,\epsilon,\delta)\in\{C,M\}\times\{0,1\}^2$. Let $X_\tau$, also written $X_F^{\epsilon,\delta}$, be the graph with
\[
V(X_\tau)=\{u_i,v_i\mid i\in\Z_4\},\qquad D(X_\tau)=\{d_i(j),\,d_i(j)^{-1}\mid i\in\Z_4,\ 1\leq j\leq4\},
\]
where $\Inv_{X_\tau}$ interchanges $d_i(j)$ and $d_i(j)^{-1}$, and where, for $i\in\Z_4$,
\[
\begin{aligned}
F=C:\quad & d_i(1)\colon u_i\to v_i, && d_i(2)\colon u_i\to v_{i+2\epsilon}, && d_i(3)\colon u_i\to v_{i+2\delta}, && d_i(4)\colon u_i\to v_{i+1},\\
F=M:\quad & d_i(1)\colon u_i\to v_i, && d_i(2)\colon u_i\to v_{i+2\epsilon}, && d_i(3)\colon u_i\to v_{i+1+2\delta}, && d_i(4)\colon u_i\to v_{i+1},
\end{aligned}
\]
all subscripts being read in $\Z_4$. For $\Gamma=\Gamma_F(k,\ell)$ as in Notation~\ref{not:Q4p-quotient}, we call $\tau(\Gamma):=(F,\epsilon_k,\epsilon_\ell)$ the \emph{quotient type of $\Gamma$}; by Notation~\ref{not:Q4p-quotient} and Lemma~\ref{lem:Q4p-quotient-voltage}, $\Gamma_{P_\Gamma}=X_{\tau(\Gamma)}$.
\end{definition}

By Definition~\ref{def:quotient-type}, every dart $d_i(j)$ of $X_\tau$ joins a vertex $u_i$ to a vertex $v_{i'}$. Hence $X_\tau$ has neither loops nor semiedges, and
\begin{equation}\label{eq:dart-decomposition}
D(X_\tau)=\bigl\{d_i(j)\bigm| i\in\Z_4,\ 1\leq j\leq4\bigr\}\ \sqcup\ \bigl\{d_i(j)^{-1}\bigm| i\in\Z_4,\ 1\leq j\leq4\bigr\},
\end{equation}
so that $X_\tau$ has exactly sixteen links, each containing exactly one of the sixteen darts listed in Table~\ref{tab:Q4p-quotient-voltages}. Moreover, the links containing the darts $d_i(1)$ and $d_i(4)$, $i\in\Z_4$, form the $8$-cycle
\begin{equation}\label{eq:common-8-cycle}
v_0,u_0,v_1,u_1,v_2,u_2,v_3,u_3,v_0,
\end{equation}
so $X_\tau$ is connected. There are eight quotient types, and the corresponding graphs $X_\tau$ are displayed in Figure~\ref{fig:Q4p-quotient-graphs}, where parallel lines represent distinct parallel links.

\begingroup
\tikzset{
quotient vertex/.style={circle,fill=black,inner sep=1.2pt},
quotient edge/.style={line width=0.35pt},
quotient double/.style={quotient edge,double=white,double distance=1.1pt},
quotient triple/.style={quotient edge,double=white,double distance=2.2pt},
every label/.append style={font=\scriptsize,inner sep=1pt}
}
\newcommand{\quotientorbit}[2]{%
\ifcase#2\relax
\or
    \foreach \x/\y in {#1}{\draw[quotient edge] (\x)--(\y);}
\or
    \foreach \x/\y in {#1}{\draw[quotient double] (\x)--(\y);}
\or
    \foreach \x/\y in {#1}{\draw[quotient triple] (\x)--(\y);\draw[quotient edge] (\x)--(\y);}
\fi
}
\newcommand{\quotientgraph}[4]{%
\begin{tikzpicture}
\coordinate (u0) at (90:0.95);
\coordinate (v0) at (45:0.95);
\coordinate (u3) at (0:0.95);
\coordinate (v3) at (-45:0.95);
\coordinate (u2) at (-90:0.95);
\coordinate (v2) at (-135:0.95);
\coordinate (u1) at (180:0.95);
\coordinate (v1) at (135:0.95);
\quotientorbit{u0/v0,u1/v1,u2/v2,u3/v3}{#1}
\quotientorbit{u0/v1,u1/v2,u2/v3,u3/v0}{#2}
\quotientorbit{u0/v2,u1/v3,u2/v0,u3/v1}{#3}
\quotientorbit{u0/v3,u1/v0,u2/v1,u3/v2}{#4}
\node[quotient vertex,label=90:$u_0$] at (u0) {};
\node[quotient vertex,label=45:$v_0$] at (v0) {};
\node[quotient vertex,label=0:$u_3$] at (u3) {};
\node[quotient vertex,label=-45:$v_3$] at (v3) {};
\node[quotient vertex,label=-90:$u_2$] at (u2) {};
\node[quotient vertex,label=-135:$v_2$] at (v2) {};
\node[quotient vertex,label=180:$u_1$] at (u1) {};
\node[quotient vertex,label=135:$v_1$] at (v1) {};
\end{tikzpicture}%
}

\begin{figure}[htbp]
\centering
\begin{minipage}[t]{0.235\textwidth}
\centering
\quotientgraph{3}{1}{0}{0}\par\vspace{0.25em}
$(a)\ X_C^{0,0}$
\end{minipage}
\hfill
\begin{minipage}[t]{0.235\textwidth}
\centering
\quotientgraph{2}{1}{1}{0}\par\vspace{0.25em}
$(b)\ X_C^{0,1}$
\end{minipage}
\hfill
\begin{minipage}[t]{0.235\textwidth}
\centering
\quotientgraph{2}{1}{1}{0}\par\vspace{0.25em}
$(c)\ X_C^{1,0}$
\end{minipage}
\hfill
\begin{minipage}[t]{0.235\textwidth}
\centering
\quotientgraph{1}{1}{2}{0}\par\vspace{0.25em}
$(d)\ X_C^{1,1}$
\end{minipage}

\par\vspace{1em}

\begin{minipage}[t]{0.235\textwidth}
\centering
\quotientgraph{2}{2}{0}{0}\par\vspace{0.25em}
$(e)\ X_M^{0,0}$
\end{minipage}
\hfill
\begin{minipage}[t]{0.235\textwidth}
\centering
\quotientgraph{2}{1}{0}{1}\par\vspace{0.25em}
$(f)\ X_M^{0,1}$
\end{minipage}
\hfill
\begin{minipage}[t]{0.235\textwidth}
\centering
\quotientgraph{1}{2}{1}{0}\par\vspace{0.25em}
$(g)\ X_M^{1,0}$
\end{minipage}
\hfill
\begin{minipage}[t]{0.235\textwidth}
\centering
\quotientgraph{1}{1}{1}{1}\par\vspace{0.25em}
$(h)\ X_M^{1,1}$
\end{minipage}
\caption{The eight quotient graphs $X_F^{\epsilon,\delta}$.}
\label{fig:Q4p-quotient-graphs}
\end{figure}
\endgroup

By Table~\ref{tab:Q4p-quotient-voltages}, the voltage of each dart $d_i(j)$ has the form $c_1\kappa+c_2\lambda$, where the integers $c_1$ and $c_2$ depend only on $i$ and $j$, and not on $F$, $k$, $\ell$ or $p$. For example, $\zeta_\Gamma(d_i(2))=(-1)^i\kappa$, so that $(c_1,c_2)=((-1)^i,0)$. The following definition assigns to each dart its coefficient vector $(c_1,c_2)\in\Z^{1\times2}$.

\begin{definition}\label{def:integral-voltage}
Put $\mathbf e_1=(1,0)$, $\mathbf e_2=(0,1)$ and $\mathbf 0=(0,0)$ in $\Z^{1\times2}$. For a quotient type $\tau=(F,\epsilon,\delta)$, the \emph{integral voltage assignment} of $X_\tau$ is the map $\widetilde\zeta_\tau\colon D(X_\tau)\to\Z^{1\times2}$ determined, for $i\in\Z_4$ and $1\leq j\leq4$, by
\[
\widetilde\zeta_\tau\bigl(d_i(j)\bigr)=
\begin{cases}
(-1)^i\mathbf e_1,&j=2,\\
(-1)^i\mathbf e_2,&j=3,\\
\mathbf 0,&j\in\{1,4\},
\end{cases}
\qquad\text{and}\qquad
\widetilde\zeta_\tau\bigl(d_i(j)^{-1}\bigr)=-\widetilde\zeta_\tau\bigl(d_i(j)\bigr).
\]
This is well defined by~\eqref{eq:dart-decomposition}. As for ordinary voltages in Subsection~\ref{subsec:ordinary-voltages}, we extend $\widetilde\zeta_\tau$ to walks by putting $\widetilde\zeta_\tau(W)=\sum_{n=1}^{m}\widetilde\zeta_\tau(f_n)$ for every walk $W=(f_1,\ldots,f_m)$ in $X_\tau$.
\end{definition}

For a prime $p$, let $\pi_p\colon\Z\to\F_p=\Z/p\Z$ be the canonical homomorphism $n\mapsto n+p\Z$. If $B=(b_{ij})$ is an integer matrix, write
\[
B^{(p)}=\bigl(\pi_p(b_{ij})\bigr)
\]
for the corresponding matrix over $\F_p$.

\begin{lemma}\label{lem:integral-voltage}
Let $\Gamma=\Gamma_F(k,\ell)$ be connected of quotient type $\tau$, as in Notation~$\ref{not:Q4p-quotient}$. Then, for every walk $W$ in $X_\tau$,
\begin{equation}\label{eq:specialisation}
\zeta_\Gamma(W)=\widetilde\zeta_\tau(W)^{(p)}\begin{pmatrix}\kappa\\\lambda\end{pmatrix}.
\end{equation}
\end{lemma}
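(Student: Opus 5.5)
The identity \eqref{eq:specialisation} is additive in the darts of $W$, so the plan is to prove it for single darts and then sum along the walk.

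\emph{Additivity.} Both sides are additive under concatenation of walks. The left side is additive because $N$ is identified with the additive group of $\F_p$ and $\zeta_\Gamma(W)$ is the sum of the dart voltages. The right side is additive because $\widetilde\zeta_\tau(W)$ is defined as a sum over darts, and the map $\Z^{1\times2}\to\F_p$, $\mathbf c\mapsto\mathbf c^{(p)}(\kappa,\lambda)^{\mathsf T}$, is a group homomorphism: reduction modulo $p$ and matrix multiplication are both additive. For the trivial walk both sides are $0$. It therefore suffices to check \eqref{eq:specialisation} for walks consisting of a single dart.

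\emph{Darts $d_i(j)$.} By \eqref{eq:dart-decomposition}, every dart of $X_\tau$ is either some $d_i(j)$ or the inverse of one, so we compare Table~\ref{tab:Q4p-quotient-voltages} with Definition~\ref{def:integral-voltage} row by row.
\begin{itemize}
\item For $j\in\{1,4\}$, both sides vanish: $\zeta_\Gamma(d_i(j))=0$ and $\widetilde\zeta_\tau(d_i(j))=\mathbf 0$.
\item For $j=2$, the right side is $((-1)^i\mathbf e_1)^{(p)}(\kappa,\lambda)^{\mathsf T}=(-1)^i\kappa=\zeta_\Gamma(d_i(2))$.
\item For $j=3$, the same computation with $\mathbf e_2$ gives $(-1)^i\lambda=\zeta_\Gamma(d_i(3))$.
\end{itemize}
This comparison is uniform in $F$, since the voltage columns of the two halves of Table~\ref{tab:Q4p-quotient-voltages} agree. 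It is also uniform in $(\epsilon,\delta)$, because the incidences do not enter the voltages.

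\emph{Inverse darts.} For $d_i(j)^{-1}$, both assignments are negated: $\zeta_\Gamma(d^{-1})=-\zeta_\Gamma(d)$ by Lemma~\ref{lem:Q4p-quotient-voltage}, and $\widetilde\zeta_\tau(d^{-1})=-\widetilde\zeta_\tau(d)$ by definition. Linearity of the right side then gives the identity for inverse darts.

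There is no real obstacle; the proof is a direct verification. The one point to state explicitly is that the dart set of $X_\tau=\Gamma_{P_\Gamma}$, on which $\zeta_\Gamma$ is defined, is exactly the set described in Definition~\ref{def:quotient-type}, so that the two assignments have the same domain. This is the identification $\Gamma_{P_\Gamma}=X_{\tau(\Gamma)}$ recorded in Definition~\ref{def:quotient-type}.
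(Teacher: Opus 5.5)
Your proposal is correct and follows essentially the same route as the paper's proof. Both verify the identity on the sixteen darts $d_i(j)$ by comparing Table~\ref{tab:Q4p-quotient-voltages} with Definition~\ref{def:integral-voltage}, note that both sides change sign under dart inversion, use \eqref{eq:dart-decomposition} to cover every dart, and extend to all walks by additivity, since $\pi_p$ is a homomorphism.
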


\begin{proof}
By Table~\ref{tab:Q4p-quotient-voltages} and Definition~\ref{def:integral-voltage}, the two sides of~\eqref{eq:specialisation} agree when $W$ consists of a single dart $d_i(j)$, and both sides change sign when this dart is replaced by its inverse. By~\eqref{eq:dart-decomposition}, they agree for every walk of length $1$. Since $\pi_p$ is a homomorphism, both sides are additive over the darts of a walk, and hence~\eqref{eq:specialisation} holds for every walk.
\end{proof}

Some of the quotient graphs are isomorphic, namely $X_C^{0,1}\cong X_C^{1,0}\cong X_C^{1,1}$ and $X_M^{0,1}\cong X_M^{1,0}$. The eight quotient types are nevertheless kept distinct, since $\tau$ determines the integral voltage assignment $\widetilde\zeta_\tau$ of Definition~\ref{def:integral-voltage} as well as the graph $X_\tau$.

\begin{lemma}\label{lem:Q4p-quotient-separation}
Let $p\geq5$. A connected graph in the $C$-family is not isomorphic to a connected graph in the $M$-family.
\end{lemma}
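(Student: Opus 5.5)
The plan is to reduce the statement to a comparison of the finitely many quotient graphs $X_\tau$, and then separate the $C$-types from the $M$-types by elementary invariants of multigraphs.

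\emph{Reduction.} Let $\Gamma=\Gamma_C(k,\ell)$ and $\Gamma'=\Gamma_M(k',\ell')$ be connected, and suppose $\Gamma\cong\Gamma'$. Both are connected $4$-valent $2$-Cayley graphs over $Q$. The group $N=\langle z\rangle$ is a Sylow $p$-subgroup of $Q$ with $p\geq5>4$ and $p\nmid2$. Hence Proposition~\ref{prop:sylow-alignment} gives an isomorphism $f\colon\Gamma\to\Gamma'$ with $f^{-1}P_\Gamma f=P_{\Gamma'}$. By Lemma~\ref{lem:quotient-alignment}, $f$ induces an isomorphism $X_{\tau(\Gamma)}\cong X_{\tau(\Gamma')}$, where $\tau(\Gamma)=(C,\epsilon_k,\epsilon_\ell)$ and $\tau(\Gamma')=(M,\epsilon_{k'},\epsilon_{\ell'})$ (Definition~\ref{def:quotient-type}). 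It therefore suffices to show that no $X_C^{\epsilon,\delta}$ is isomorphic to any $X_M^{\epsilon',\delta'}$. No voltages are needed for this step.

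\emph{Edge multiplicities.} For each vertex $u_i$, record the multiset of multiplicities of the links from $u_i$ to its distinct neighbours. This multiset is an isomorphism invariant, and by the symmetry $i\mapsto i+1$ it is the same for all $u_i$. Reading Definition~\ref{def:quotient-type}:
\begin{itemize}
\item $X_C^{0,0}$ gives $\{3,1\}$;
\item $X_C^{0,1}$, $X_C^{1,0}$ and $X_C^{1,1}$ give $\{2,1,1\}$;
\item $X_M^{0,0}$ gives $\{2,2\}$ (links $u_i v_i$ and $u_i v_{i+1}$, each doubled);
\item $X_M^{0,1}$ and $X_M^{1,0}$ give $\{2,1,1\}$;
\item $X_M^{1,1}$ gives $\{1,1,1,1\}$.
\end{itemize}
This invariant alone separates every pair except those in which both multisets are $\{2,1,1\}$.

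\emph{The remaining pairs.} Since $X_C^{0,1}\cong X_C^{1,0}\cong X_C^{1,1}$ and $X_M^{0,1}\cong X_M^{1,0}$, it is enough to compare $X_C^{0,1}$ with $X_M^{0,1}$. In each, let $Y$ be the simple spanning subgraph formed by the links of multiplicity $1$; this is preserved by any isomorphism.
\begin{itemize}
\item In $X_C^{0,1}$ the single links are $u_iv_{i+1}$ and $u_iv_{i+2}$. Following them gives the $8$-cycle $u_0,v_1,u_3,v_0,u_2,v_3,u_1,v_2,u_0$, so $Y$ is connected.
\item In $X_M^{0,1}$ the single links are $u_iv_{i+1}$ and $u_iv_{i+3}$. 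Both $v_{i+1}$ and $v_{i+3}$ are adjacent to $u_i$ and $u_{i+2}$, so $Y$ is the disjoint union of the two $4$-cycles $u_0v_1u_2v_3$ and $u_1v_2u_3v_0$.
\end{itemize}
Hence these two graphs are not isomorphic, which completes the argument.

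The only step requiring care is reading the dart incidences of Definition~\ref{def:quotient-type} correctly, since the invariant is computed from them. The reduction itself is immediate from Proposition~\ref{prop:sylow-alignment} and Lemma~\ref{lem:quotient-alignment}.
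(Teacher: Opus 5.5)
Your proof is correct and follows essentially the same route as the paper: you reduce via Proposition~\ref{prop:sylow-alignment} and Lemma~\ref{lem:quotient-alignment} to the quotient graphs, separate most pairs by link multiplicities, and distinguish the remaining pairs because the multiplicity-$1$ links form an $8$-cycle in the $C$-types but two disjoint $4$-cycles in $X_M^{0,1}$ and $X_M^{1,0}$. One cosmetic point: an isomorphism need not send $u$-vertices to $u$-vertices, so your per-vertex multiset at $u_i$ should be read as a global invariant, which is legitimate because each link has exactly one endpoint among the $u_i$.
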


\begin{proof}
Suppose that a connected graph $\Gamma$ of the $C$-family is isomorphic to a connected graph $\Gamma'$ of the $M$-family, and use Notation~\ref{not:Q4p-quotient} for $\Gamma$ and $\Gamma'$. By Proposition~\ref{prop:sylow-alignment} there is an isomorphism $f\colon\Gamma\to\Gamma'$ with $f^{-1}P_\Gamma f=P_{\Gamma'}$, so $\Gamma_{P_\Gamma}\cong\Gamma'_{P_{\Gamma'}}$ by Lemma~\ref{lem:quotient-alignment}. Hence $X_\tau\cong X_{\tau'}$ for some quotient type $\tau$ of the $C$-family and some quotient type $\tau'$ of the $M$-family.

For a link $e$, call the number of links with the same endvertices as $e$, including $e$ itself, the \emph{multiplicity} of $e$. The following properties of the quotient graphs follow from Definition~\ref{def:quotient-type} and can be read off from Figure~\ref{fig:Q4p-quotient-graphs}. The maximum multiplicity of a link is $3$ in $X_C^{0,0}$ and at most $2$ in every quotient graph of the $M$-family. Each of $X_C^{0,1}$, $X_C^{1,0}$ and $X_C^{1,1}$ has links of multiplicity $1$ as well as links of multiplicity $2$, the latter form a perfect matching, and deleting them leaves an $8$-cycle. Among the quotient graphs of the $M$-family, every link of $X_M^{0,0}$ has multiplicity $2$, every link of $X_M^{1,1}=K_{4,4}$ has multiplicity $1$, and deleting from $X_M^{0,1}$ or from $X_M^{1,0}$ the perfect matching formed by the links of multiplicity $2$ leaves two disjoint $4$-cycles. All these properties are invariant under isomorphism, so $X_\tau\not\cong X_{\tau'}$, a contradiction.
\end{proof}

By Lemma~\ref{lem:Q4p-quotient-separation} it remains to compare graphs within one family. For the lifting computation, we choose in each quotient graph $X_\tau$ the spanning tree obtained by deleting the link containing $d_3(4)$ from the $8$-cycle~\eqref{eq:common-8-cycle}. The associated fundamental closed walks give a homology basis for that quotient graph, as shown in the following lemma.

\begin{lemma}\label{lem:Q4p-common-tree}
Let $\tau$ be a quotient type, and let $T$ be the spanning tree of $X_\tau$ obtained by deleting from the $8$-cycle~\eqref{eq:common-8-cycle} the link containing $d_3(4)$. Then $X_\tau$ has exactly nine cotree links, and $\widetilde\zeta_\tau(C_d)=\widetilde\zeta_\tau(d)$ for the fundamental closed walk $C_d=dQ_d$ determined by any cotree dart $d$, where $Q_d$ is as in Subsection~$\ref{subsec:fundamental}$. Moreover, for every prime $p$, the homology classes of these nine fundamental closed walks form a basis of $H_1(X_\tau;\F_p)$.
\end{lemma}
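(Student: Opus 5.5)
The proof has three parts: count the cotree links, compute the voltages of the fundamental closed walks, and identify the homology basis.

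\emph{Counting cotree links.} By~\eqref{eq:dart-decomposition}, $X_\tau$ has $8$ vertices and exactly sixteen links, and it has no loops or semiedges. The $8$-cycle~\eqref{eq:common-8-cycle} consists of the eight links containing $d_i(1)$ and $d_i(4)$, $i\in\Z_4$. Deleting the link of $d_3(4)$ (which joins $u_3$ and $v_0$) leaves a Hamiltonian path, hence a spanning tree $T$ with $7$ links. So there are $16-7=9$ cotree links, namely the links of $d_i(2)$, $d_i(3)$ for $i\in\Z_4$, together with the link of $d_3(4)$.

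\emph{Voltages of fundamental walks.} Every link of $T$ contains a dart $d_i(1)$ or $d_i(4)$. By Definition~\ref{def:integral-voltage} these darts, and their inverses, have voltage $\mathbf 0$. Since $Q_d$ is a walk in $T$, its voltage is a sum of $\pm\mathbf 0$. Hence $\widetilde\zeta_\tau(C_d)=\widetilde\zeta_\tau(d)+\widetilde\zeta_\tau(Q_d)=\widetilde\zeta_\tau(d)$. This holds for either choice of dart in a cotree link, since $C_{d^{-1}}$ has voltage $-\widetilde\zeta_\tau(d)=\widetilde\zeta_\tau(d^{-1})$.

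\emph{Homology basis.} This follows directly from the description in Subsection~\ref{subsec:fundamental}. The graph $X_\tau$ is connected, its cotree edges are all links, and it has no loops and no semiedges. So for odd $p$ the classes of the nine fundamental closed walks $C_1,\ldots,C_9$ form a basis of $H_1(X_\tau;\F_p)$. For $p=2$, the extra basis elements coming from semiedges are absent, since there are no semiedges, so the same nine classes again form a basis.

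The only care needed is in the counting: parallel links are distinct and each contributes separately to the sixteen links. There is no genuine obstacle. As a consistency check, the dimension is $16-8+1=9$, which agrees with the cycle rank of a connected graph.
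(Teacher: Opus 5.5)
Your proposal is correct and follows essentially the same route as the paper's proof. You delete one link of the $8$-cycle to get a spanning tree with $7$ links, leaving $9$ cotree links. You note that all tree darts have voltage $\mathbf 0$, so $\widetilde\zeta_\tau(C_d)=\widetilde\zeta_\tau(d)$. You then invoke the spanning-tree description of $H_1(X_\tau;\F_p)$, using the absence of semiedges to cover $p=2$.
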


\begin{proof}
Deleting one link of the $8$-cycle~\eqref{eq:common-8-cycle} leaves a path through all eight vertices of $X_\tau$, that is, a spanning tree with seven links; since $X_\tau$ has sixteen links, nine of them lie outside $T$.

By Definition~\ref{def:integral-voltage}, the assignment $\widetilde\zeta_\tau$ vanishes on the darts of the links of the $8$-cycle~\eqref{eq:common-8-cycle}, and hence on every dart of $T$. For a cotree dart $d$, the walk $Q_d$ lies entirely in $T$, so $\widetilde\zeta_\tau(Q_d)=\mathbf 0$ and therefore $\widetilde\zeta_\tau(C_d)=\widetilde\zeta_\tau(d)$. Finally, $X_\tau$ has no semiedges, so by Subsection~\ref{subsec:fundamental} the homology classes of the nine fundamental closed walks form a basis of $H_1(X_\tau;\F_p)$ for every prime $p$.
\end{proof}

We now explain the purpose of the notation below. For two graphs $\Gamma$ and $\Gamma'$ of the same family and an isomorphism $\varphi$ between their quotient graphs, the lifting criterion of Corollary~\ref{cor:homology-comparison} need only be checked on the nine closed walks of Lemma~\ref{lem:Q4p-common-tree}, and by Lemma~\ref{lem:integral-voltage} every voltage involved is an integral linear combination of $\kappa,\lambda$ or of $\kappa',\lambda'$. Hence the lifting condition is a system of nine linear equations whose coefficients are integers depending only on the quotient types and on $\varphi$, and not on $p$, $k$ or $\ell$. Notation~\ref{not:Q4p-matrices} collects these coefficients in one integer matrix $L_\varphi\in\Z^{9\times4}$. It is assembled from three simpler matrices: $W_\tau$ lists the integral voltages of the sixteen darts, $H_\tau$ records, with signs, how often each of the nine closed walks traverses each dart, and $P_\varphi$ records how $\varphi$ permutes the darts. Since only finitely many matrices $L_\varphi$ occur, the lifting problem for all graphs in the two families and all primes $p\geq5$ reduces to a finite computation with these matrices.

\begin{notation}\label{not:Q4p-matrices}
Let $\tau$ and $\tau'$ be quotient types of the same family, and let $\varphi\colon X_\tau\to X_{\tau'}$ be an isomorphism between the corresponding quotient graphs of Definition~\ref{def:quotient-type}. By~\eqref{eq:dart-decomposition}, every link of $X_\tau$ contains exactly one of the sixteen darts $d_i(j)$, which are indexed by the pairs $(i,j)$ with $i\in\Z_4$ and $1\leq j\leq4$; here $d_i(j)$ is the dart beginning at $u_i$ that corresponds to the element $s_j$ of the connection set (Notation~\ref{not:Q4p-quotient} and Definition~\ref{def:quotient-type}). We list these darts in a fixed order, the same for all quotient types: choose an ordering $(i_1,j_1),\ldots,(i_{16},j_{16})$ of the sixteen index pairs, and put $\bar d_n=d_{i_n}(j_n)$ in $X_\tau$ and $\bar d'_n=d_{i_n}(j_n)$ in $X_{\tau'}$ for $1\leq n\leq16$. For a walk $C$ in $X_\tau$, let $h_\tau(C)\in\Z^{1\times16}$ be the row vector whose $n$th entry is the number of occurrences of $\bar d_n$ in $C$ minus the number of occurrences of $\bar d_n^{-1}$ in $C$, and define $h_{\tau'}$ on walks in $X_{\tau'}$ in the same way. Let $C_1,\ldots,C_9$ be the fundamental closed walks $C_d$ of Lemma~\ref{lem:Q4p-common-tree}, where $d$ runs, in a fixed order, through the darts $d_i(j)$ lying in the nine cotree links. With $\widetilde\zeta_\tau$ and $\widetilde\zeta_{\tau'}$ the integral voltage assignments of Definition~\ref{def:integral-voltage}, put
\[
W_\tau=\begin{pmatrix}\widetilde\zeta_\tau(\bar d_1)\\ \vdots\\ \widetilde\zeta_\tau(\bar d_{16})\end{pmatrix}\in\Z^{16\times2},
\quad
H_\tau=\begin{pmatrix}h_\tau(C_1)\\ \vdots\\ h_\tau(C_9)\end{pmatrix}\in\Z^{9\times16},
\]
\[
P_\varphi=(s_{nm})\in\Z^{16\times16}, \ \text{ where }\
s_{nm}=
\begin{cases}
1,&\bar d_n^\varphi=\bar d'_m,\\
-1,&\bar d_n^\varphi=(\bar d'_m)^{-1},\\
0,&\text{otherwise},
\end{cases}
\]
and
\[
L_\varphi=\begin{pmatrix}H_\tau P_\varphi W_{\tau'}&-H_\tau W_\tau\end{pmatrix}\in\Z^{9\times4}.
\]
Thus $W_\tau$ and $H_\tau$ depend only on $\tau$ and on the chosen enumeration, and $P_\varphi$ depends only on $\varphi$ and on the chosen enumeration; in particular, these matrices are independent of $p$, $k$ and $\ell$.
\end{notation}

\begin{lemma}\label{lem:Q4p-rows}
In Notation~$\ref{not:Q4p-matrices}$, $P_\varphi$ is a signed permutation matrix, and for $1\leq r\leq9$ the $r$th rows of $H_\tau W_\tau$ and of $H_\tau P_\varphi W_{\tau'}$ are $\widetilde\zeta_\tau(C_r)$ and $\widetilde\zeta_{\tau'}(C_r^\varphi)$, respectively.
\end{lemma}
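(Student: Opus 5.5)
The lemma is a matter of unwinding definitions, so the plan is a direct verification in three steps.

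\emph{Step 1: $P_\varphi$ is a signed permutation matrix.} Since $\varphi$ is an isomorphism, it permutes $D(X_{\tau'})$ and commutes with inversion. By~\eqref{eq:dart-decomposition} applied to $X_{\tau'}$, each image $\bar d_n^\varphi$ is exactly one of $\bar d'_m$ or $(\bar d'_m)^{-1}$, and never both, since $X_{\tau'}$ has no semiedges. Hence each row of $P_\varphi$ has exactly one nonzero entry, equal to $\pm1$. Because $\varphi$ is bijective on links, distinct $n$ give distinct $m$, so each column also has exactly one nonzero entry.

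\emph{Step 2: the formula $\widetilde\zeta_\tau(C)=h_\tau(C)W_\tau$.} Group the darts of a walk $C$ by link. Each occurrence of $\bar d_n$ contributes $\widetilde\zeta_\tau(\bar d_n)$, and each occurrence of $\bar d_n^{-1}$ contributes $-\widetilde\zeta_\tau(\bar d_n)$, by Definition~\ref{def:integral-voltage}. Summing these contributions gives
\[
\widetilde\zeta_\tau(C)=\sum_n h_\tau(C)_n\,\widetilde\zeta_\tau(\bar d_n)=h_\tau(C)W_\tau.
\]
Taking $C=C_r$ shows that the $r$th row of $H_\tau W_\tau$ is $\widetilde\zeta_\tau(C_r)$. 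The same argument in $X_{\tau'}$ gives $\widetilde\zeta_{\tau'}(C')=h_{\tau'}(C')W_{\tau'}$ for every walk $C'$ in $X_{\tau'}$.

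\emph{Step 3: the identity $h_{\tau'}(C^\varphi)=h_\tau(C)P_\varphi$.} This is the only step that needs any care, namely the sign bookkeeping. Fix a single dart occurrence in $C$. If it is $\bar d_n$ and $\bar d_n^\varphi=\bar d'_m$, then it contributes $+1$ to entry $n$ of $h_\tau(C)$ and $+1=s_{nm}$ to entry $m$ of $h_{\tau'}(C^\varphi)$. If instead $\bar d_n^\varphi=(\bar d'_m)^{-1}$, it contributes $-1=s_{nm}$ to entry $m$. If the occurrence is $\bar d_n^{-1}$, then $(\bar d_n^{-1})^\varphi=(\bar d_n^\varphi)^{-1}$, so both signs flip. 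In every case the contribution to entry $m$ of $h_{\tau'}(C^\varphi)$ equals the contribution to entry $n$ of $h_\tau(C)$ multiplied by $s_{nm}$. Summing over all occurrences and using linearity gives the identity.

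Combining Steps 2 and 3,
\[
\widetilde\zeta_{\tau'}(C_r^\varphi)=h_\tau(C_r)P_\varphi W_{\tau'},
\]
which is the $r$th row of $H_\tau P_\varphi W_{\tau'}$. This completes the verification.
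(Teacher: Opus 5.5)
Your proposal is correct and follows essentially the same route as the paper. Both arguments get the signed permutation property from $\varphi$ being a bijection on links. Both then prove $\widetilde\zeta_\tau(C)=h_\tau(C)W_\tau$ by additivity and the sign rule for inverse darts, and $h_{\tau'}(C^\varphi)=h_\tau(C)P_\varphi$ by tracking each dart occurrence. Your Step~3 simply spells out the sign cases that the paper compresses into one sentence.
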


\begin{proof}
As $\varphi$ induces a bijection between the links of $X_\tau$ and those of $X_{\tau'}$, each row and each column of $P_\varphi$ has exactly one nonzero entry, and so $P_\varphi$ is a signed permutation matrix.

Let $C$ be a walk in $X_\tau$. By Definition~\ref{def:integral-voltage}, $\widetilde\zeta_\tau$ is additive on walks and $\widetilde\zeta_\tau(d^{-1})=-\widetilde\zeta_\tau(d)$ for every dart $d$, so $\widetilde\zeta_\tau(C)=h_\tau(C)W_\tau$; likewise $\widetilde\zeta_{\tau'}(C')=h_{\tau'}(C')W_{\tau'}$ for every walk $C'$ in $X_{\tau'}$. Moreover $\varphi$ carries each occurrence of $\bar d_n^{\pm1}$ in $C$ to an occurrence of $(\bar d'_m)^{\pm s_{nm}}$ in $C^\varphi$, where $m$ is the index with $s_{nm}\neq0$; summing over $n$ gives $h_{\tau'}(C^\varphi)=h_\tau(C)P_\varphi$. Taking $C=C_r$ and using that the $r$th row of $H_\tau$ is $h_\tau(C_r)$ gives both assertions.
\end{proof}

\begin{lemma}\label{lem:Q4p-lifting-matrix}
Under Notations~$\ref{not:Q4p-quotient}$ and~$\ref{not:Q4p-matrices}$, suppose that $\Gamma$ and $\Gamma'$ have quotient types $\tau$ and $\tau'$, respectively. Then $\varphi$ lifts along $\wp_{P_\Gamma}$ and $\wp_{P_{\Gamma'}}$ if and only if there exists $q\in\F_p^*$ such that
\[
L_\varphi^{(p)}
\begin{pmatrix}
\kappa'\\
\lambda'\\
q\kappa\\
q\lambda
\end{pmatrix}
=0.
\]
\end{lemma}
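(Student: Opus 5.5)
The plan is to combine Corollary~\ref{cor:homology-comparison} with the explicit description of the voltages. By Lemma~\ref{lem:Q4p-quotient-voltage}, $\zeta_\Gamma$ and $\zeta_{\Gamma'}$ are $\F_p$-voltage descriptions of $\wp_{P_\Gamma}$ and $\wp_{P_{\Gamma'}}$. Let $\psi,\psi'$ be the corresponding equivalences. Then $\varphi$ lifts along $\wp_{P_\Gamma}$ and $\wp_{P_{\Gamma'}}$ if and only if it lifts along $\wp_{\zeta_\Gamma}$ and $\wp_{\zeta_{\Gamma'}}$: compose a lift with $\psi^{\pm1}$ and $\psi'^{\pm1}$, exactly as in the proof of Corollary~\ref{cor:quotient-lifting-test}. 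Since $N\cong\F_p$ has dimension one, every $\F_p$-linear automorphism $\alpha$ of $N$ is multiplication by some scalar in $\F_p^*$.

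Next I apply Remark~\ref{rem:computational-lifting} with the basis $c_r=[C_r]$, $1\le r\le 9$, of $H_1(X_\tau;\F_p)$ from Lemma~\ref{lem:Q4p-common-tree}. The lifting condition~\eqref{eq:homology-comparison} then says that there is a scalar $\mu\in\F_p^*$ with
\[
\mu\,\zeta_\Gamma(C_r)=\zeta_{\Gamma'}(C_r^\varphi)\qquad\text{for all }1\le r\le9.
\]
By Lemma~\ref{lem:integral-voltage}, $\zeta_\Gamma(C_r)=\widetilde\zeta_\tau(C_r)^{(p)}(\kappa,\lambda)^{\top}$ and $\zeta_{\Gamma'}(C_r^\varphi)=\widetilde\zeta_{\tau'}(C_r^\varphi)^{(p)}(\kappa',\lambda')^{\top}$. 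By Lemma~\ref{lem:Q4p-rows}, these integral row vectors are the $r$th rows of $H_\tau W_\tau$ and of $H_\tau P_\varphi W_{\tau'}$, respectively. Stacking the nine equations and writing $q=\mu$ gives
\[
(H_\tau P_\varphi W_{\tau'})^{(p)}\begin{pmatrix}\kappa'\\\lambda'\end{pmatrix}-(H_\tau W_\tau)^{(p)}\begin{pmatrix}q\kappa\\ q\lambda\end{pmatrix}=0,
\]
which is exactly $L_\varphi^{(p)}(\kappa',\lambda',q\kappa,q\lambda)^{\top}=0$, because reduction mod $p$ commutes with matrix products and block concatenation. Conversely, any $q\in\F_p^*$ satisfying this system defines the automorphism $\alpha\colon n\mapsto qn$. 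This $\alpha$ satisfies~\eqref{eq:homology-comparison} on a basis, hence on all of $H_1$ by linearity, so $\varphi$ lifts by Corollary~\ref{cor:homology-comparison}.

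The only delicate point is bookkeeping. One must check that the closed walks $C_r^\varphi$ used in defining $\overline\eta(c_r^{\varphi_*})$ are the ones whose integral voltages are recorded by $H_\tau P_\varphi W_{\tau'}$; this is precisely Lemma~\ref{lem:Q4p-rows}. One must also check that the identification of $N$ with $\F_p$ is the same for both graphs, which holds since both use $z^t\mapsto t$. Connectedness of the derived coverings, needed in Corollary~\ref{cor:homology-comparison}, holds because $\Gamma$ and $\Gamma'$ are assumed connected.
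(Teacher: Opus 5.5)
Your proposal is correct and follows the paper's proof essentially step by step. You reduce to lifting along $\wp_{\zeta_\Gamma}$ and $\wp_{\zeta_{\Gamma'}}$ via the equivalences of Lemma~\ref{lem:Q4p-quotient-voltage}, identify $\alpha$ with multiplication by some $q\in\F_p^*$, test Corollary~\ref{cor:homology-comparison} on the basis $[C_1],\ldots,[C_9]$ of Lemma~\ref{lem:Q4p-common-tree}, and rewrite the nine equations through Lemmas~\ref{lem:integral-voltage} and~\ref{lem:Q4p-rows} as the system given by $L_\varphi^{(p)}$; your extra check that both graphs use the same identification of $N$ with $\F_p$ is harmless but unnecessary, since any change of identification is absorbed into $q$.
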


\begin{proof}
By Lemma~\ref{lem:Q4p-quotient-voltage}, $\zeta_\Gamma$ and $\zeta_{\Gamma'}$ are $\F_p$-voltage descriptions of $\wp_{P_\Gamma}$ and $\wp_{P_{\Gamma'}}$. Composing with the corresponding equivalences of covering projections, as in the proof of Corollary~\ref{cor:quotient-lifting-test}, shows that $\varphi$ lifts along $\wp_{P_\Gamma}$ and $\wp_{P_{\Gamma'}}$ if and only if it lifts along $\wp_{\zeta_\Gamma}$ and $\wp_{\zeta_{\Gamma'}}$. Every $\F_p$-linear automorphism of $\F_p$ is of the form $x\mapsto qx$ for a unique $q\in\F_p^*$. Hence, by Corollary~\ref{cor:homology-comparison}, $\varphi$ lifts if and only if there exists $q\in\F_p^*$ such that
\[
\overline\zeta_{\Gamma'}\bigl(\gamma^{\varphi_*}\bigr)=q\,\overline\zeta_\Gamma(\gamma)
\]
for every $\gamma\in H_1(X_\tau;\F_p)$. Since the homology classes represented by $C_1,\ldots,C_9$ form a basis of $H_1(X_\tau;\F_p)$ by Lemma~\ref{lem:Q4p-common-tree}, this is equivalent to
\[
\zeta_{\Gamma'}(C_r^\varphi)=q\,\zeta_\Gamma(C_r)
\]
for $1\leq r\leq9$. By Lemmas~\ref{lem:integral-voltage} and~\ref{lem:Q4p-rows}, $\zeta_{\Gamma'}(C_r^\varphi)$ and $\zeta_\Gamma(C_r)$ are the $r$th entries of
\[
(H_\tau P_\varphi W_{\tau'})^{(p)}
\begin{pmatrix}
\kappa'\\
\lambda'
\end{pmatrix}
\quad\text{and}\quad
(H_\tau W_\tau)^{(p)}
\begin{pmatrix}
\kappa\\
\lambda
\end{pmatrix},
\]
respectively. By the definition of $L_\varphi$, the nine equalities are therefore equivalent to the displayed condition.
\end{proof}

For a field $K$ and a matrix $B$ with $n$ columns and entries in $K$, we regard the elements of $K^n$ as column vectors and write
\[
\ker_K B=\{x\in K^n\mid Bx=0\},
\]
and $\operatorname{Row}_K(B)$ for the subspace of $K^{1\times n}$ spanned by the rows of $B$. Thus $\ker_K B=\{x\in K^n\mid wx=0\text{ for all }w\in\operatorname{Row}_K(B)\}$. We shall use the following elementary consequence of the Smith normal form.

\begin{lemma}\label{lem:kernel-mod-p}
Let $A\in\Z^{m\times n}$ and $D\in\Z^{s\times n}$, and suppose that $\ker_{\Q}A\leq\ker_{\Q}D$. If none of the nonzero diagonal entries in a Smith normal form of $A$ is divisible by $p$, then
\[
\ker_{\F_p}A^{(p)}\leq\ker_{\F_p}D^{(p)}.
\]
\end{lemma}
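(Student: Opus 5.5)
We plan to use the Smith normal form of $A$ to compare kernels over $\Q$ and over $\F_p$ through integral row spaces. Choose unimodular matrices $U\in\GL(m,\Z)$ and $V\in\GL(n,\Z)$ with $UAV=\Sigma=\diag(\sigma_1,\ldots,\sigma_\rho,0,\ldots,0)$, where $\sigma_1,\ldots,\sigma_\rho$ are the nonzero invariants and $\rho=\rank_{\Q}A$. Since $U$ and $V$ have determinant $\pm1$, their reductions $U^{(p)}$ and $V^{(p)}$ are invertible over $\F_p$. Hence $\ker_{\F_p}A^{(p)}=V^{(p)}\ker_{\F_p}\Sigma^{(p)}$, and likewise over $\Q$.

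The hypothesis $p\nmid\sigma_i$ makes $\Sigma^{(p)}$ have rank exactly $\rho$. Thus
\[
\ker_{\F_p}\Sigma^{(p)}=\{x\in\F_p^n\mid x_1=\cdots=x_\rho=0\},
\]
which is the reduction of $\ker_{\Q}\Sigma$. So
\[
\ker_{\F_p}A^{(p)}=V^{(p)}\langle \mathbf f_{\rho+1},\ldots,\mathbf f_n\rangle_{\F_p},
\]
where $\mathbf f_i$ are the standard basis vectors. In other words, $\ker_{\F_p}A^{(p)}$ is spanned by the reductions of the last $n-\rho$ columns $w_{\rho+1},\ldots,w_n$ of $V$. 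These integer vectors lie in $\ker_{\Q}A$, since $A w_j=U^{-1}\Sigma\mathbf f_j=0$ for $j>\rho$.

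The assumption $\ker_{\Q}A\leq\ker_{\Q}D$ gives $Dw_j=0$ in $\Q^s$, and hence in $\Z^s$, because $D$ and $w_j$ are integral. Reducing modulo $p$ gives $D^{(p)}w_j^{(p)}=0$ for each $j>\rho$. Since the vectors $w_j^{(p)}$ span $\ker_{\F_p}A^{(p)}$, linearity yields $\ker_{\F_p}A^{(p)}\leq\ker_{\F_p}D^{(p)}$.

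The only delicate point is showing that $\ker_{\F_p}A^{(p)}$ does not grow beyond the reduction of an integral basis of $\ker_{\Q}A$. This is exactly where the condition $p\nmid\sigma_i$ enters: it forces $\rank A^{(p)}=\rank_{\Q}A$. The unimodularity of $V$ provides the integral basis whose reduction spans the kernel. Everything else is routine bookkeeping with the reduction map $\pi_p$, which is a ring homomorphism and so commutes with matrix products.
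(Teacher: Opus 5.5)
Your proof is correct. It is the dual of the paper's argument rather than a copy of it.

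The paper works with row spaces. From $\ker_{\Q}A\leq\ker_{\Q}D$ it takes orthogonal complements to get $\operatorname{Row}_{\Q}(D)\leq\operatorname{Row}_{\Q}(A)$. For each integral row $w$ of $D$, the vector $wV$ then lies in $\operatorname{Row}_{\Q}(UAV)$, so it is an integer vector supported on the first $r$ coordinates. Using $p\nmid g_i$, the paper concludes $w^{(p)}\in\operatorname{Row}_{\F_p}(A^{(p)})$, and then dualizes back in $\F_p^n$.

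You stay on the kernel side throughout. The last $n-\rho$ columns of $V$ are integer vectors in $\ker_{\Q}A$, and $p\nmid\sigma_i$ guarantees that their reductions span all of $\ker_{\F_p}A^{(p)}$. So the hypothesis can be applied directly to these specific integer vectors, and the conclusion follows by reducing mod $p$.

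What your route buys:
\begin{itemize}
\item It avoids the nontrivial duality step $\operatorname{Row}_{\Q}(A)=(\ker_{\Q}A)^{\perp}$ that the paper uses at the outset.
\item It gives a slightly sharper by-product: $\ker_{\F_p}A^{(p)}$ equals the reduction of the lattice $\ker_{\Q}A\cap\Z^n$. This makes it transparent that $p\nmid\sigma_i$ is used only to preserve the rank.
\end{itemize}
The paper's route instead produces the equivalent statement $\operatorname{Row}_{\F_p}(D^{(p)})\leq\operatorname{Row}_{\F_p}(A^{(p)})$.

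Both arguments rest on the same two facts: unimodular matrices remain invertible mod $p$, and the nonzero Smith invariants do not vanish mod $p$.
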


\begin{proof}
Let $UAV=\diag(g_1,\ldots,g_r,0,\ldots,0)$ be a Smith normal form of $A$, where $U$ and $V$ are unimodular and $g_1,\ldots,g_r$ are positive integers. Taking orthogonal complements in $\Q^n$, the hypothesis $\ker_{\Q}A\leq\ker_{\Q}D$ gives $\operatorname{Row}_{\Q}(D)\leq\operatorname{Row}_{\Q}(A)$. Let $w$ be a row of $D$. Then $wV\in\operatorname{Row}_{\Q}(AV)=\operatorname{Row}_{\Q}(UAV)$ and $wV\in\Z^n$, so
\[
wV=(c_1,\ldots,c_r,0,\ldots,0)\ \text{ for some }\ c_1,\ldots,c_r\in\Z.
\]
Since $p\nmid g_i$ for $1\leq i\leq r$, the row space of $(UAV)^{(p)}=\diag\bigl(\pi_p(g_1),\ldots,\pi_p(g_r),0,\ldots,0\bigr)$ consists of the vectors in $\F_p^n$ whose last $n-r$ coordinates are zero; hence $w^{(p)}V^{(p)}=(wV)^{(p)}\in\operatorname{Row}_{\F_p}\bigl((UAV)^{(p)}\bigr)$. Since $\det U,\det V\in\{1,-1\}$, the matrices $U^{(p)}$ and $V^{(p)}$ are invertible, so $\operatorname{Row}_{\F_p}\bigl((UAV)^{(p)}\bigr)=\operatorname{Row}_{\F_p}\bigl(A^{(p)}\bigr)V^{(p)}$, and therefore $w^{(p)}\in\operatorname{Row}_{\F_p}\bigl(A^{(p)}\bigr)$. Thus $\operatorname{Row}_{\F_p}\bigl(D^{(p)}\bigr)\leq\operatorname{Row}_{\F_p}\bigl(A^{(p)}\bigr)$, and taking orthogonal complements in $\F_p^n$ gives $\ker_{\F_p}A^{(p)}\leq\ker_{\F_p}D^{(p)}$.
\end{proof}

There are finitely many ordered pairs $(\tau,\tau')$ of quotient types from the same family and, for each pair, finitely many isomorphisms $\varphi\colon X_\tau\to X_{\tau'}$; the matrices $L_\varphi$ are therefore finite in number, and their entries are integers independent of $p$, $k$ and $\ell$. For each of them, Appendix~\ref{apdx:pge5}\ref{apdx:pge5-C} and~\ref{apdx:pge5-M} exhibit a matrix $D_\varphi\in\Z^{2\times4}$ with
\[
\ker_{\Q}L_\varphi\leq\ker_{\Q}D_\varphi,
\]
and Appendix~\ref{apdx:pge5}\ref{apdx:pge5-Smith} shows that the nonzero Smith invariants of $L_\varphi$ all lie in $\{1,2,3\}$, hence are divisible by no prime $p\geq5$. Lemma~\ref{lem:kernel-mod-p}, applied with $A=L_\varphi$ and $D=D_\varphi$, therefore gives $\ker_{\F_p}L_\varphi^{(p)}\leq\ker_{\F_p}D_\varphi^{(p)}$ for every $p\geq5$. These containments, together with the parity relations also verified in Appendix~\ref{apdx:pge5}, give the following lemma, in which $(e_0,e_1,e_2)$ is as in~\eqref{eq:e-and-x} and
\[
(e'_0,e'_1,e'_2)=(0,k',\ell'),\qquad (x_0,x_1,x_2)=(0,\kappa,\lambda),\qquad (x'_0,x'_1,x'_2)=(0,\kappa',\lambda').
\]

\begin{lemma}\label{lem:Q4p-finite-computation}
Under Notation~$\ref{not:Q4p-quotient}$, suppose that $\Gamma=\Gamma_F(k,\ell)$ and $\Gamma'=\Gamma_F(k',\ell')$ for one and the same $F\in\{C,M\}$, and that $\Gamma\cong\Gamma'$.
\begin{enumerate}[label=\textup{(\roman*)},ref=\textup{(\roman*)}]
\item\label{lem:Q4p-finite-computation-C}
If $F=C$, then there exist $q_0\in\F_p^*$ and $\rho\in S_3$ such that, for all $0\leq i,j\leq2$,
\[
x'_i-x'_j=q_0\bigl(x_{\rho(i)}-x_{\rho(j)}\bigr) \ \text{ and }\ e'_i-e'_j\equiv e_{\rho(i)}-e_{\rho(j)}\pmod2.
\]

\item\label{lem:Q4p-finite-computation-M}
If $F=M$, then there exist $q_0\in\F_p^*$ and $\sigma\in\{1,-1\}$ such that
\[
\begin{aligned}
\text{either }\ &(\kappa',\lambda')=q_0(\kappa,\sigma\lambda)\ \text{and}\ (\epsilon_{k'},\epsilon_{\ell'})=(\epsilon_k,\epsilon_\ell),\\
\text{or }\ &(\kappa',\lambda')=q_0(\lambda,\sigma\kappa)\ \text{and}\ (\epsilon_{k'},\epsilon_{\ell'})=(\epsilon_\ell,\epsilon_k).
\end{aligned}
\]
\end{enumerate}
\end{lemma}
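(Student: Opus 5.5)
The argument runs through the quotient--lifting machinery; the only genuinely new input is the finite data in the appendix. Assume $\Gamma\cong\Gamma'$ with both of family $F$. By Proposition~\ref{prop:sylow-alignment} and Lemma~\ref{lem:quotient-alignment}, some isomorphism $f\colon\Gamma\to\Gamma'$ with $f^{-1}P_\Gamma f=P_{\Gamma'}$ induces $\varphi\colon X_\tau\to X_{\tau'}$, where $\tau=\tau(\Gamma)$ and $\tau'=\tau(\Gamma')$, and $\varphi$ lifts. Lemma~\ref{lem:Q4p-lifting-matrix} then gives $q\in\F_p^*$ such that
\[
x:=(\kappa',\lambda',q\kappa,q\lambda)^{\top}\in\ker_{\F_p}L_\varphi^{(p)}.
\]
Using the Appendix containments $\ker_\Q L_\varphi\leq\ker_\Q D_\varphi$, the Smith invariants in $\{1,2,3\}$, and Lemma~\ref{lem:kernel-mod-p} (valid since $p\geq5$), we get $D_\varphi^{(p)}x=0$. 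The two rows of $D_\varphi$ are designed to encode the target relations. Specifically, they should read $(\kappa',\lambda')=\mp\,q\,(\text{linear image of }(\kappa,\lambda))$, where the linear map is one of the affine-relabelling matrices coming from $S_3$ (family $C$) or from $\{(\kappa,\pm\lambda),(\lambda,\pm\kappa)\}$ (family $M$).

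The parity parts come from the types themselves, not from voltages. Since $\varphi$ exists, $\tau'$ is constrained by $\tau$ and by the choice of $\varphi$. The Appendix lists, for each pair $(\tau,\tau')$ and each $\varphi$, which $\rho$ (or which swap) is attached. One then checks that $(\epsilon_{k'},\epsilon_{\ell'})$ equals the permuted parity data, that is, $e'_i-e'_j\equiv e_{\rho(i)}-e_{\rho(j)}\pmod 2$ for $F=C$, and the corresponding statement for $F=M$.

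\textbf{Family $C$.} I would read off, for each $\varphi$, the permutation $\rho\in S_3$ it induces on the three ``non-identity'' classes $j\in\{1,2,3\}$ of parallel links, with $j=4$ the $b$-links. These classes correspond to the elements $a^{e_0},a^{e_1},a^{e_2}$. The constraint $D_\varphi^{(p)}x=0$ should be exactly
\[
x'_i-x'_j=\pm q\,(x_{\rho(i)}-x_{\rho(j)}),
\]
so one takes $q_0=\pm q$. Here the translation by $x_{\rho(0)}$ appears because the ``$0$'' slot of $\Gamma'$ corresponds to slot $\rho(0)$ of $\Gamma$. For family $M$, the analogous bookkeeping yields the two alternatives, with the sign $\sigma$ coming from whether $\varphi$ reverses the orientation index $i\mapsto -i$ on $\Z_4$.

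The main obstacle is organisational rather than conceptual. One must guarantee that the Appendix enumeration of all dart-level isomorphisms $\varphi$ (including all permutations of parallel links) is complete, and that for each $\varphi$ the chosen $D_\varphi$ really yields one of the listed relation patterns. I would handle this by computing $\ker_\Q L_\varphi$ explicitly for each $\varphi$. Generically this kernel is $2$-dimensional, spanned by vectors of the form $\bigl(M(\kappa,\lambda),(\kappa,\lambda)\bigr)$. I would then read $D_\varphi$ as the rows $(I_2,-M)$ and match $M$ against the finite list of relabelling matrices. The $\varphi$ for which no consistent $M$ exists would be those where the kernel forces $\kappa=0$, $\lambda=0$ or $\kappa=\lambda$ modulo $p$. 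These can be excluded because $\kappa,\lambda\neq0$ would then force $k$ or $\ell\in\{0,p\}$, and this should be checked against connectivity (Lemma~\ref{lem:Q4p-connectivity}) and the definition of $\mathcal P_F$.
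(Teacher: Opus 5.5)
Your main line matches the paper's proof. You take a lifting quotient isomorphism $\varphi$ from Proposition~\ref{prop:sylow-alignment} and Lemma~\ref{lem:quotient-alignment}, obtain $(\kappa',\lambda',q\kappa,q\lambda)^{\mathsf T}\in\ker_{\F_p}L_\varphi^{(p)}$ from Lemma~\ref{lem:Q4p-lifting-matrix}, pass to $\ker_{\F_p}D_\varphi^{(p)}$ via Lemma~\ref{lem:kernel-mod-p} and the Appendix, and then set $q_0=\eta q$, with the parity relations for the same $\rho$ (or the same swap) read off from the Appendix.

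Your last paragraph, however, is unnecessary, and its proposed exclusion would not work. The Appendix asserts $\ker_{\Q}L_\varphi\leq\ker_{\Q}D_\varphi$ for \emph{every} $\varphi$, including those whose kernel has dimension less than $2$, so no $\varphi$ ever needs to be excluded. Also, $\rho$ comes from that data, not from a single permutation of the classes $j$, since $\varphi$ may permute parallel links differently at different vertices. Finally, degenerate values could not be excluded the way you suggest: $k=p$ (giving $\kappa=0$) and $\ell=k+p$ (giving $\kappa=\lambda$) are legitimate parameters of connected graphs.
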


\begin{proof}
By Proposition~\ref{prop:sylow-alignment} and Lemma~\ref{lem:quotient-alignment}, there is an isomorphism $\varphi\colon X_{\tau(\Gamma)}\to X_{\tau(\Gamma')}$ that lifts along $\wp_{P_\Gamma}$ and $\wp_{P_{\Gamma'}}$. Hence, by Lemma~\ref{lem:Q4p-lifting-matrix}, there exists $q\in\F_p^*$ such that, on putting $u=q\kappa$ and $v=q\lambda$,
\[
(\kappa',\lambda',u,v)^{\mathsf T}\in\ker_{\F_p}L_\varphi^{(p)}.
\]
By the preceding paragraph, $\ker_{\F_p}L_\varphi^{(p)}\leq\ker_{\F_p}D_\varphi^{(p)}$. Hence $(r',s',u,v)=(\kappa',\lambda',u,v)$ satisfies, over $\F_p$, the relations recorded in Appendix~\ref{apdx:pge5}\ref{apdx:pge5-C} and~\ref{apdx:pge5-M}. Moreover, $\tau(\Gamma)=(F,\epsilon_k,\epsilon_\ell)$ and $\tau(\Gamma')=(F,\epsilon_{k'},\epsilon_{\ell'})$, so the parities $\epsilon,\delta,\epsilon',\delta'$ in the Appendix are $\epsilon_k,\epsilon_\ell,\epsilon_{k'},\epsilon_{\ell'}$.

Suppose that $F=C$. By Appendix~\ref{apdx:pge5}\ref{apdx:pge5-C}, there exist $\rho\in S_3$ and $\eta\in\{1,-1\}$ such that
\[
(\kappa',\lambda')=\eta\bigl(y_{\rho(1)}-y_{\rho(0)},\,y_{\rho(2)}-y_{\rho(0)}\bigr),\ \text{ where }\ (y_0,y_1,y_2)=(0,u,v),
\]
and $e'_i-e'_j\equiv e_{\rho(i)}-e_{\rho(j)}\pmod2$ for all $0\leq i,j\leq2$. Since $(y_0,y_1,y_2)=q(x_0,x_1,x_2)$, putting $q_0=\eta q$ gives $x'_i=q_0\bigl(x_{\rho(i)}-x_{\rho(0)}\bigr)$ for $i=1,2$; this also holds for $i=0$, since both sides are then $0$. Subtracting two of these three relations gives $x'_i-x'_j=q_0\bigl(x_{\rho(i)}-x_{\rho(j)}\bigr)$ for all $0\leq i,j\leq2$. This proves~\ref{lem:Q4p-finite-computation-C}.

Suppose that $F=M$. By Appendix~\ref{apdx:pge5}\ref{apdx:pge5-M}, for some $\eta,\sigma\in\{1,-1\}$ we have
\begin{align*}
\text{either }\ &(\kappa',\lambda')=\eta(u,\sigma v),\ \ (\epsilon_{k'},\epsilon_{\ell'})=(\epsilon_k,\epsilon_\ell),\\
\text{or }\ &(\kappa',\lambda')=\eta(v,\sigma u),\ \ (\epsilon_{k'},\epsilon_{\ell'})=(\epsilon_\ell,\epsilon_k).
\end{align*}
Since $(u,v)=q(\kappa,\lambda)$, statement~\ref{lem:Q4p-finite-computation-M} follows on putting $q_0=\eta q$.
\end{proof}

Lemma~\ref{lem:Q4p-finite-computation} determines the parameters modulo $p$ and modulo $2$; the Chinese remainder theorem recovers them in $\Z_{2p}$.

\begin{lemma}\label{lem:Q4p-recovery}
Suppose that $p\geq5$. Then the following statements hold.
\begin{enumerate}[label=\textup{(\roman*)},ref=\textup{(\roman*)}]
\item\label{lem:Q4p-recovery-C}
If $\Gamma_C(k,\ell)\cong\Gamma_C(k',\ell')$, then $(k',\ell')\in\mathcal O_C(k,\ell)$.
\item\label{lem:Q4p-recovery-M}
If $\Gamma_M(k,\ell)$ and $\Gamma_M(k',\ell')$ are connected and isomorphic, then $(k',\ell')\in\mathcal O_M(k,\ell)$.
\end{enumerate}
\end{lemma}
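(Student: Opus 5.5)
The plan is to combine Lemma~\ref{lem:Q4p-finite-computation} with the Chinese remainder theorem $\Z_{2p}\cong\Z_2\times\F_p$. Under this isomorphism $m\mapsto(\epsilon_m,\overline m)$, and since $\overline m=2\widehat m$, an element $m$ is determined by the pair $(\epsilon_m,\widehat m)$. Moreover, for $r\in U$ the element $rm$ corresponds to $(\epsilon_m,\overline r\,\widehat m)$, because $r$ is odd. Conversely, for every $q_0\in\F_p^*$ there is a unique $r\in U$ with $r$ odd and $\overline r=q_0$. So a multiplier $q_0$ on the $\F_p$-components that leaves all parities unchanged is realised by multiplication by some $r\in U$.

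For part~(i), note that $\Gamma_C(k,\ell)$ and $\Gamma_C(k',\ell')$ are connected by Lemma~\ref{lem:Q4p-connectivity}, so Notation~\ref{not:Q4p-quotient} applies. Lemma~\ref{lem:Q4p-finite-computation}\ref{lem:Q4p-finite-computation-C} then gives $q_0$ and $\rho$. Since $x_i=\widehat{e_i}$, taking $j=0$ yields
\[
\widehat{e'_i}=q_0\bigl(\widehat{e_{\rho(i)}}-\widehat{e_{\rho(0)}}\bigr)=q_0\,\widehat{e_{\rho(i)}-e_{\rho(0)}}
\]
in $\F_p$, using that $m\mapsto\widehat m$ is additive. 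The parity relation with $j=0$ gives $\epsilon_{e'_i}=\epsilon_{e_{\rho(i)}-e_{\rho(0)}}$. Choose $r\in U$ odd with $\overline r=q_0$. Then $e'_i$ and $r(e_{\rho(i)}-e_{\rho(0)})$ agree modulo $2$ and modulo $p$. By the Chinese remainder theorem they are equal in $\Z_{2p}$, for $i=1,2$, which means $(k',\ell')\in\mathcal O_C(k,\ell)$.

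For part~(ii), connectedness is assumed, so Lemma~\ref{lem:Q4p-finite-computation}\ref{lem:Q4p-finite-computation-M} applies. In the first case we have $\widehat{k'}=q_0\widehat k$ and $\widehat{\ell'}=q_0\sigma\widehat\ell=q_0\widehat{\sigma\ell}$. The parities satisfy $\epsilon_{k'}=\epsilon_k$ and $\epsilon_{\ell'}=\epsilon_\ell=\epsilon_{\sigma\ell}$, since negation preserves parity in $\Z_{2p}$. With $r$ chosen as before, this gives $(k',\ell')=r(k,\sigma\ell)$, which lies in $\mathcal O_M(k,\ell)$. The second case is identical with $k$ and $\ell$ swapped and gives $(k',\ell')=r(\ell,\sigma k)$.

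I expect no real obstacle here, since all the content lies in Lemma~\ref{lem:Q4p-finite-computation}. The only point needing care is the bookkeeping that a single odd $r$ simultaneously realises the $\F_p$-multiplier and fixes the parities. This works because $U$ consists exactly of the odd elements that are nonzero modulo $p$, so every $q_0\in\F_p^*$ lifts to such an $r$.
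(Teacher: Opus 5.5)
Your proposal is correct and follows essentially the same route as the paper: take $q_0$ and $\rho$ (respectively $q_0$ and $\sigma$) from Lemma~\ref{lem:Q4p-finite-computation}, lift $q_0$ to the unique odd $r\in\Z_{2p}^*$ with $r\equiv q_0\pmod p$, and use the Chinese remainder theorem to combine the $\F_p$-relations with the parity relations. The only cosmetic difference is that you use the additive map $m\mapsto\widehat m$ directly, whereas the paper multiplies the $\F_p$-relations by $2$ to rewrite them as congruences for the $e_i$ modulo $p$.
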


\begin{proof}
We use Notation~\ref{not:Q4p-quotient} throughout. For statement~\ref{lem:Q4p-recovery-C}, note that both graphs are connected by Lemma~\ref{lem:Q4p-connectivity}, and let $q_0$ and $\rho$ be as in Lemma~\ref{lem:Q4p-finite-computation}\ref{lem:Q4p-finite-computation-C}. By the Chinese remainder theorem, there is a unique $r\in\Z_{2p}$ with $r\equiv q_0\pmod p$ and $r\equiv1\pmod2$, and $r\in\Z_{2p}^*$ because $r$ is odd and $q_0\neq0$. Since $2x_i\equiv e_i\pmod p$ for $0\leq i\leq2$, multiplying the first relation of Lemma~\ref{lem:Q4p-finite-computation}\ref{lem:Q4p-finite-computation-C} by $2$ gives $e'_i-e'_j\equiv r\bigl(e_{\rho(i)}-e_{\rho(j)}\bigr)\pmod p$, while the second relation together with $r\equiv1\pmod2$ gives $e'_i-e'_j\equiv r\bigl(e_{\rho(i)}-e_{\rho(j)}\bigr)\pmod2$. By the Chinese remainder theorem,
\[
e'_i-e'_j= r\bigl(e_{\rho(i)}-e_{\rho(j)}\bigr)\ \text{ in }\ \Z_{2p}.
\]
Taking $(i,j)=(1,0)$ and $(2,0)$ and using $e'_0=0$ gives
\[
(k',\ell')=r\bigl(e_{\rho(1)}-e_{\rho(0)},\,e_{\rho(2)}-e_{\rho(0)}\bigr)\in\mathcal O_C(k,\ell).
\]

For statement~\ref{lem:Q4p-recovery-M}, let $q_0$ and $\sigma$ be as in Lemma~\ref{lem:Q4p-finite-computation}\ref{lem:Q4p-finite-computation-M}, and choose $r\in\Z_{2p}^*$ as above. Suppose first that the first alternative in Lemma~\ref{lem:Q4p-finite-computation}\ref{lem:Q4p-finite-computation-M} holds. From $\kappa'=q_0\kappa$ and $\lambda'=q_0\sigma\lambda$ we obtain, after multiplying by $2$,
\[
k'\equiv rk\pmod p\ \text{ and }\ \ell'\equiv r\sigma\ell\pmod p .
\]
Since $r$ and $\sigma$ are odd, $r\sigma\in\Z_{2p}^*$ and $r\sigma\equiv1\pmod2$; hence $(\epsilon_{k'},\epsilon_{\ell'})=(\epsilon_k,\epsilon_\ell)$ gives $k'\equiv rk\pmod 2$ and $\ell'\equiv r\sigma\ell\pmod2$. By the Chinese remainder theorem, $(k',\ell')=r(k,\sigma\ell)$ in $\Z_{2p}$, and therefore $(k',\ell')\in\mathcal O_M(k,\ell)$. In the second alternative, $(\kappa',\lambda')=q_0(\lambda,\sigma\kappa)$ and $(\epsilon_{k'},\epsilon_{\ell'})=(\epsilon_\ell,\epsilon_k)$, and the same argument with $(k,\ell)$ replaced by $(\ell,k)$ gives $(k',\ell')=r(\ell,\sigma k)\in\mathcal O_M(k,\ell)$.
\end{proof}

\begin{proof}[Proof of Proposition~$\ref{prop:Q4p-classification}$]
The reverse implications in~\ref{prop:Q4p-classification-C} and~\ref{prop:Q4p-classification-M} follow from Lemma~\ref{lem:Q4p-evident-transformations}.

For $p=3$, the remaining assertions are verified by the finite computation described in Appendix~\ref{apdx:p3}. Suppose now that $p\geq5$. Statement~\ref{prop:Q4p-classification-CM} holds: by Lemma~\ref{lem:Q4p-connectivity} every graph of the $C$-family is connected, so by Lemma~\ref{lem:Q4p-quotient-separation} it is not isomorphic to a connected graph of the $M$-family, nor to the disconnected graph $\Gamma_M(p,p)$. By Lemma~\ref{lem:Q4p-recovery}\ref{lem:Q4p-recovery-C}, the forward implication in~\ref{prop:Q4p-classification-C} holds, and by Lemma~\ref{lem:Q4p-recovery}\ref{lem:Q4p-recovery-M} the forward implication in~\ref{prop:Q4p-classification-M} holds whenever both graphs are connected.

By Lemma~\ref{lem:Q4p-connectivity}, the only disconnected graph in the $M$-family is $\Gamma_M(p,p)$, so $\Gamma_M(p,p)\cong\Gamma_M(k',\ell')$ forces $(k',\ell')=(p,p)$. Since $rp=p$ in $\Z_{2p}$ for every odd $r$, and since $-p=p$, we have $\mathcal O_M(p,p)=\{(p,p)\}$, and the forward implication in~\ref{prop:Q4p-classification-M} also holds in the disconnected case. Thus~\ref{prop:Q4p-classification-C}--\ref{prop:Q4p-classification-CM} are proved.

Finally, if two graphs in~\ref{prop:Q4p-classification-C} or~\ref{prop:Q4p-classification-M} are isomorphic, the corresponding forward implication shows that their parameter pairs belong to the same set $\mathcal O_C(k,\ell)$ or $\mathcal O_M(k,\ell)$. Lemma~\ref{lem:Q4p-evident-transformations} then shows that their connection sets are BCI-equivalent.
\end{proof}

We now return to arbitrary four-element connection sets.

\begin{lemma}\label{lem:Q4p-four-set-normal-form}
Let $A=\langle a\rangle$, and let $S\subseteq Q$ with $|S|=4$ and $1\in S$.
\begin{enumerate}[label=\textup{(\roman*)}]
\item If $|S\cap A|=2$, then $S$ is BCI-equivalent to $M(k,\ell)$ for some $(k,\ell)\in\mathcal P_M$.
\item If $|S\cap A|\in\{1,3\}$, then $S$ is BCI-equivalent to $C(k,\ell)$ for some $(k,\ell)\in\mathcal P_C$.
\end{enumerate}
\end{lemma}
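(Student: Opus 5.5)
The plan is to use the elementary BCI-equivalences $S\mapsto gS^{\sigma_{r,t}}$ to normalise $S$. Write $S=(S\cap A)\cup(S\cap bA)$. The key tools are the following.

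\begin{enumerate}[label=\textup{(\alph*)}]
\item Left multiplication by an element $x\in S$ gives $x^{-1}S\ni1$.
\item Left multiplication by $b$ or $ba^j$ interchanges the cosets $A$ and $bA$.
\item The automorphisms $\sigma_{1,t}$ shift the exponents of the elements $ba^i$ by $t$ and fix $A$ pointwise.
\item The relation $a^{-m}ba^i=ba^{i+m}$ lets us combine multiplication and automorphisms.
\end{enumerate}

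For (i), $|S\cap A|=2$ and $1\in S$ give $S=\{1,a^k,ba^i,ba^j\}$ with $k\neq0$ and $i\neq j$. Applying $\sigma_{1,-j}$ fixes $1$ and $a^k$ and sends $ba^i,ba^j$ to $ba^{i-j},b$. This yields $M(k,\ell)$ with $\ell=i-j\neq0$, so $(k,\ell)\in\mathcal P_M$.

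For (ii), first suppose $|S\cap A|=3$. Then $S=\{1,a^k,a^\ell,ba^i\}$ with $k,\ell$ nonzero and distinct. Applying $\sigma_{1,-i}$ turns $ba^i$ into $b$ and fixes $A$, giving $C(k,\ell)$ directly.

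Now suppose $|S\cap A|=1$, so $S=\{1,ba^{i_1},ba^{i_2},ba^{i_3}\}$ with the $i_m$ distinct. Multiplying on the left by $(ba^{i_3})^{-1}=a^{-i_3}b^{-1}$ produces a set with three elements in $A$ and one in $bA$. Indeed, $a^{-i_3}b^{-1}ba^{i_m}=a^{i_m-i_3}$, giving the elements $1,a^{i_1-i_3},a^{i_2-i_3}$, while $a^{-i_3}b^{-1}\cdot1=a^{-i_3}b^{-1}\in bA$. This set contains $1$ and has $|\,\cdot\cap A|=3$, so the previous case applies. The two nonzero exponents $i_1-i_3$ and $i_2-i_3$ are distinct, so the resulting pair lies in $\mathcal P_C$.

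The only potential obstacle is bookkeeping: checking in each case that the resulting parameters are nonzero and distinct, so that they lie in $\mathcal P_M$ or $\mathcal P_C$. These checks are immediate from $|S|=4$. Finally, BCI-equivalence is transitive, as already used in Lemma~\ref{lem:Q4p-evident-transformations}, so composing the steps gives the result.
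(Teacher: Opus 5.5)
Your proposal is correct and follows essentially the same route as the paper: $\sigma_{1,-j}$ for $|S\cap A|=2$ and $\sigma_{1,-i}$ for $|S\cap A|=3$, exactly as written there. The only cosmetic difference is in the case $|S\cap A|=1$, where you left-multiply by $(ba^{i_3})^{-1}$ and then invoke the $|S\cap A|=3$ case, whereas the paper first applies $\sigma_{1,-i_3}$ and then uses $b^{-1}=ba^p$ together with $\sigma_{1,p}$; both arrive at $C(i_1-i_3,\,i_2-i_3)$.
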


\begin{proof}
Suppose first that $|S\cap A|=2$. Then $S=\{1,a^k,ba^i,ba^j\}$ with $k\neq0$ and $i\neq j$. Hence $S^{\sigma_{1,-j}}=M(k,i-j)$, where $(k,i-j)\in\mathcal P_M$.

Suppose next that $|S\cap A|=3$. Then $S=\{1,a^k,a^\ell,ba^t\}$ with $k,\ell\neq0$ and $k\neq\ell$. Hence $S^{\sigma_{1,-t}}=C(k,\ell)$, where $(k,\ell)\in\mathcal P_C$.

Finally, suppose that $|S\cap A|=1$, say $S=\{1,ba^{i_1},ba^{i_2},ba^{i_3}\}$ with $i_1,i_2,i_3$ pairwise distinct. Applying $\sigma_{1,-i_3}$, we may assume that $S=\{1,ba^i,ba^j,b\}$ with $i,j\neq0$ and $i\neq j$. Since $b^{-1}=ba^p$, we have $(b^{-1}S)^{\sigma_{1,p}}=C(i,j)$, where $(i,j)\in\mathcal P_C$.
\end{proof}

\begin{proof}[Proof of Theorem~$\ref{thm:intro-Q4p-4BCI}$]
The case $p=2$ follows from \cite[Lemma~3.4]{JinLiu2013}, so let $p$ be odd. By \cite[Theorem~1.1]{XieZhang2026}, $Q$ is a $3$-BCI-group; since isomorphic bi-Cayley graphs have the same valency, it therefore suffices to consider $S,T\subseteq Q$ with $|S|=|T|=4$ and $\BCay(Q,S)\cong\BCay(Q,T)$. After left multiplication, we may assume that $1\in S\cap T$. By Lemma~\ref{lem:bcay-components}, every connected component of $\BCay(Q,S)$ has $2|\langle S\rangle|$ vertices, and similarly for $T$; hence $|\langle S\rangle|=|\langle T\rangle|$.

Put $A=\langle a\rangle$. Suppose first that $S\subseteq A$. Then $T\subseteq A$, since otherwise $\langle T\rangle$ would contain an element $ba^i$ of order $4$, whence $4\mid|\langle T\rangle|$, whereas $|\langle S\rangle|$ divides $|A|=2p$ and $p$ is odd. As $A$ is cyclic, it has a unique subgroup of each order, so $\langle S\rangle=\langle T\rangle=:H$. By Lemma~\ref{lem:bcay-components}, the components of $\BCay(Q,S)$ and of $\BCay(Q,T)$ are isomorphic to $\BCay(H,S)$ and $\BCay(H,T)$, respectively, whence $\BCay(H,S)\cong\BCay(H,T)$. Now $|H|\geq|S|=4$ and $|H|$ divides $2p$ with $p$ odd, so $8\nmid|H|$ and $H\in\{\langle a^2\rangle,A\}$. By the classification of tetravalent cyclic Haar graphs \cite[Remark~1.2]{KoikeKovacs2014}, together with \cite[Proposition~2.7(iv)]{ABGR2020}, the cyclic group $H$ is a $4$-BCI-group; thus $T=gS^\delta$ for some $g\in H$ and $\delta\in\Aut H$. If $H=A$, then every $\delta\in\Aut H$ is the restriction of some $\sigma_{r,0}$ by~\eqref{eq:Q4p-automorphisms}. If $H=\langle a^2\rangle$, write $(a^2)^\delta=a^{2s}$ with $s\in\Z_p^*$ and choose $r\in\Z_{2p}^*$ such that $r\equiv s\pmod p$; then $\sigma_{r,0}|_H=\delta$. In either case $\delta$ extends to an automorphism of $Q$, and $S$ and $T$ are BCI-equivalent in $Q$.

Suppose now that $S\not\subseteq A$. Then $4\mid|\langle S\rangle|$, and hence $T\not\subseteq A$. By Lemma~\ref{lem:Q4p-four-set-normal-form}, there exist sets $S_0$ and $T_0$, each belonging to either the $C$-family or the $M$-family, such that $S$ is BCI-equivalent to $S_0$ and $T$ is BCI-equivalent to $T_0$. Hence $\BCay(Q,S_0)\cong\BCay(Q,T_0)$. By Proposition~\ref{prop:Q4p-classification}\ref{prop:Q4p-classification-CM}, $S_0$ and $T_0$ belong to the same family. The final assertion of Proposition~\ref{prop:Q4p-classification} then shows that $S_0$ and $T_0$ are BCI-equivalent, and therefore $S$ and $T$ are BCI-equivalent.

Consequently, every subset of $Q$ of cardinality at most $4$ is a BCI-subset, and $Q_{4p}$ is a $4$-BCI-group.
\end{proof}

\begin{remark}\label{rem:Q4p-classification}
The proof of Theorem~\ref{thm:intro-Q4p-4BCI} also yields an isomorphism classification of tetravalent bi-Cayley graphs over $Q_{4p}$. We do not list the resulting isomorphism classes or describe the graphs in each class, since this is long and not needed for the proof.
\end{remark}

\appendix

\section{Computational verification}
\label{apdx:computational-verification}

The Magma files used in the finite computations are supplied as ancillary files. We record here the precise assertions verified by them.

\subsection{The computation for \texorpdfstring{$p\geq5$}{p >= 5}}
\label{apdx:pge5}

For every ordered pair $\tau=(F,\epsilon,\delta)$ and $\tau'=(F,\epsilon',\delta')$ of quotient types from the same family $F$, the program enumerates all isomorphisms $\varphi\colon X_\tau\to X_{\tau'}$ in the dart model. For each such $\varphi$, it constructs
\[
L_\varphi=
\begin{pmatrix}
H_\tau P_\varphi W_{\tau'}&-H_\tau W_\tau
\end{pmatrix}
\in\Z^{9\times4}
\]
as in Notation~\ref{not:Q4p-matrices}, and verifies the assertions~\ref{apdx:pge5-Smith}--\ref{apdx:pge5-M} below. There a vector of $\Q^4$ is written $(r',s',u,v)$, and the subset of $\Q^4$ displayed in~\ref{apdx:pge5-C} or in~\ref{apdx:pge5-M} is the solution set of two linear equations with integer coefficients in $r',s',u,v$; we let $D_\varphi\in\Z^{2\times4}$ be the coefficient matrix of those two equations, so that the subset in question is $\ker_{\Q}D_\varphi$ and the containment asserted in~\ref{apdx:pge5-C} or in~\ref{apdx:pge5-M} reads
\[
\ker_{\Q}L_\varphi\leq\ker_{\Q}D_\varphi.
\]
In the first alternative of~\ref{apdx:pge5-M}, for instance, $D_\varphi=\left(\begin{smallmatrix}1&0&-\eta&0\\0&1&0&-\eta\sigma\end{smallmatrix}\right)$.

\begin{enumerate}[label=\textup{(\roman*)},ref=\textup{(\roman*)}]
\item\label{apdx:pge5-Smith} The four diagonal entries in a Smith normal form of $L_\varphi$ form one of
\[
(1,1,0,0),\ (1,1,1,0),\ (1,1,2,0),\ (1,1,1,1),\ (1,1,1,2),\ (1,1,1,3),\ (1,1,2,2).
\]

\item\label{apdx:pge5-C} Suppose that $F=C$. Put $(\bar e_0,\bar e_1,\bar e_2)=(0,\epsilon,\delta)$ and $(\bar e'_0,\bar e'_1,\bar e'_2)=(0,\epsilon',\delta')$. Then there exist $\rho\in S_3$ and $\eta\in\{1,-1\}$ such that
\[
\ker_{\Q}L_\varphi\leq
\left\{(r',s',u,v)\in\Q^4\mid
(r',s')=\eta\bigl(y_{\rho(1)}-y_{\rho(0)},y_{\rho(2)}-y_{\rho(0)}\bigr)\right\},
\]
where $(y_0,y_1,y_2)=(0,u,v)$, and
\[
\bar e'_i-\bar e'_j=\bar e_{\rho(i)}-\bar e_{\rho(j)}
\ \text{in }\F_2
\]
for all $0\leq i,j\leq2$.

\item\label{apdx:pge5-M} Suppose that $F=M$. Then, for some $\eta,\sigma\in\{1,-1\}$, 
\begin{align*}
\text{either }\ &\ker_{\Q}L_\varphi\leq
\{(r',s',u,v)\in\Q^4\mid(r',s')=\eta(u,\sigma v)\}\ \text{ and }\ 
(\epsilon',\delta')=(\epsilon,\delta),\\
\text{or }\ &\ker_{\Q}L_\varphi\leq
\{(r',s',u,v)\in\Q^4\mid(r',s')=\eta(v,\sigma u)\} \ \text{ and }\ 
(\epsilon',\delta')=(\delta,\epsilon).
\end{align*}
\end{enumerate}

\subsection{The computation for \texorpdfstring{$p=3$}{p = 3}}
\label{apdx:p3}

For $p=3$, the program first determines all distinct sets $\mathcal O_C(k,\ell)$ and $\mathcal O_M(k,\ell)$. It finds exactly three distinct sets in the $C$-family and exactly six distinct sets in the $M$-family, namely
\[
\begin{gathered}
\mathcal O_C(1,2),\ \mathcal O_C(1,3),\ \mathcal O_C(2,4),\\
\mathcal O_M(1,1),\ \mathcal O_M(1,2),\ \mathcal O_M(1,3),\
\mathcal O_M(2,2),\ \mathcal O_M(2,3),\ \mathcal O_M(3,3).
\end{gathered}
\]

By Lemma~$\ref{lem:Q4p-evident-transformations}$, parameter pairs belonging to the same set $\mathcal O_C(k,\ell)$ or $\mathcal O_M(k,\ell)$ give BCI-equivalent connection sets, and hence isomorphic graphs. It therefore remains to distinguish the graphs corresponding to the nine distinct sets above. 

For a graph $\Lambda$, let $A_\Lambda$ denote its adjacency matrix, let $\nu_0(A_\Lambda)$ denote the nullity of $A_\Lambda$ over $\Q$, and put
\[
I(\Lambda)=\bigl(\nu_0(A_\Lambda),\operatorname{tr}(A_\Lambda^4)\bigr).
\]
If $\Lambda_1\cong\Lambda_2$, then $A_{\Lambda_2}=P^{-1}A_{\Lambda_1}P$ for some permutation matrix $P$. Thus their adjacency matrices have the same rank and nullity, and $A_{\Lambda_2}^4=P^{-1}A_{\Lambda_1}^4P$ gives $\operatorname{tr}(A_{\Lambda_2}^4)=\operatorname{tr}(A_{\Lambda_1}^4)$. Hence $I(\Lambda)$ is an isomorphism invariant. Computing $I(\Lambda)$ for $\Lambda=\Gamma_F(k,\ell)$, with one parameter pair $(k,\ell)$ from each of the nine sets, gives
\[
\begin{array}{c|ccc|cccccc}
F&C&C&C&M&M&M&M&M&M\\
(k,\ell)&(1,2)&(1,3)&(2,4)&(1,1)&(1,2)&(1,3)&(2,2)&(2,3)&(3,3)\\ \hline
\nu_0(A_\Lambda)&0&0&0&10&6&6&6&2&18\\
\operatorname{tr}(A_\Lambda^4)&768&816&960&864&768&912&864&912&1536
\end{array}
\]
and these nine values of $I$ are pairwise distinct. Hence graphs corresponding to distinct sets are nonisomorphic. Together with Lemma~$\ref{lem:Q4p-evident-transformations}$, this proves all three parts of Proposition~$\ref{prop:Q4p-classification}$ for $p=3$.

\section*{Acknowledgements}

The author thanks Zhishuo Zhang for helpful consultations on regular coverings and voltage assignments.

\section*{Declaration of generative AI use}

Before using generative AI, the author had conjectured Theorem~\ref{thm:intro-Q4p-4BCI} and had obtained a proof of it except for one gap: the author knew which graphs in the $C$- and $M$-families are isomorphic, but had no proof of this. A literature search assisted by generative AI made the author aware that quotient and lifting arguments based on regular coverings and voltage assignments had been used in other research in algebraic graph theory, although the author had not seen them used explicitly for isomorphism problems. The quotient--lifting method of Section~\ref{sec:reduction}, which is based on these arguments, closed the remaining gap.

The author also used generative AI to assist in writing the Magma programs in Appendix~\ref{apdx:computational-verification} and in improving the presentation and language of the manuscript. After using generative AI, the author reviewed and edited the content, including the Magma programs, as needed and takes full responsibility for the content of the manuscript.


\begin{thebibliography}{99}

\bibitem{ABGR2020}
M.~Arezoomand, A.~Behmaram, M.~Ghasemi, and P.~Raeighasht,
Isomorphisms of bi-Cayley graphs on dihedral groups,
\textit{Discrete Math. Algorithms Appl.} \textbf{12} (2020), Paper No. 2050051.

\bibitem{Babai1977}
L.~Babai,
Isomorphism problem for a class of point-symmetric structures,
\textit{Acta Math. Acad. Sci. Hungar.} \textbf{29} (1977), 329--336.

\bibitem{ConderMa2013}
M.~D.~E.~Conder and J.~Ma,
Arc-transitive abelian regular covers of cubic graphs,
\textit{J. Algebra} \textbf{387} (2013), 215--242.

\bibitem{FengKwak2004}
Y.-Q.~Feng and J.~H.~Kwak,
$s$-Regular cubic graphs as coverings of the complete bipartite graph $K_{3,3}$,
\textit{J. Graph Theory} \textbf{45} (2004), 101--112.

\bibitem{GT1977}
J.~L.~Gross and T.~W.~Tucker,
Generating all graph coverings by permutation voltage assignments,
\textit{Discrete Math.} \textbf{18} (1977), 273--283.

\bibitem{JinLiu2010}
W.~Jin and W.~J.~Liu,
A classification of nonabelian simple $3$-BCI-groups,
\textit{European J. Combin.} \textbf{31} (2010), 1257--1264.

\bibitem{JinLiu2013}
W.~Jin and W.~J.~Liu,
On isomorphisms of small order bi-Cayley graphs,
\textit{Util. Math.} \textbf{92} (2013), 317--327.

\bibitem{KoikeKovacs2014}
H.~Koike and I.~Kov\'acs,
Isomorphic tetravalent cyclic Haar graphs,
\textit{Ars Math. Contemp.} \textbf{7} (2014), 215--235.

\bibitem{KoikeKovacs2019}
H.~Koike and I.~Kov\'acs,
A classification of nilpotent $3$-BCI groups,
\textit{Int. J. Group Theory} \textbf{8} (2019), 11--24.

\bibitem{Li2002}
C.~H.~Li,
On isomorphisms of finite Cayley graphs---a survey,
\textit{Discrete Math.} \textbf{256} (2002), 301--334.

\bibitem{Ma1997}
H.-C.~Ma,
On isomorphisms of Cayley digraphs on dicyclic groups,
\textit{Australas. J. Combin.} \textbf{16} (1997), 189--194.

\bibitem{MMP2004}
A.~Malni\v{c}, D.~Maru\v{s}i\v{c}, and P.~Poto\v{c}nik,
Elementary abelian covers of graphs,
\textit{J. Algebraic Combin.} \textbf{20} (2004), 71--97.

\bibitem{MNS2000}
A.~Malni\v{c}, R.~Nedela, and M.~\v{S}koviera,
Lifting graph automorphisms by voltage assignments,
\textit{European J. Combin.} \textbf{21} (2000), 927--947.

\bibitem{PP2017}
P.~Poto\v{c}nik and R.~Po\v{zar},
Smallest tetravalent half-arc-transitive graphs with the vertex-stabiliser isomorphic to the dihedral group of order $8$,
\textit{J. Combin. Theory Ser. A} \textbf{145} (2017), 172--183.

\bibitem{PS2019}
P.~Poto\v{c}nik and P.~Spiga,
Lifting a prescribed group of automorphisms of graphs,
\textit{Proc. Amer. Math. Soc.} \textbf{147} (2019), 3787--3796.

\bibitem{XFK2022}
J.-H.~Xie, Y.-Q.~Feng, and Y.~S.~Kwon,
Normal Cayley digraphs of generalized quaternion groups with CI-property,
\textit{Appl. Math. Comput.} \textbf{422} (2022), Paper No. 126966.

\bibitem{XieZhang2026}
J.-H.~Xie and Z.~Zhang,
Isomorphisms of bi-Cayley graphs on generalized quaternion groups,
\textit{Comm. Algebra} \textbf{54} (2026), 459--470.

\bibitem{XuJinEtAl2008}
S.~J.~Xu, W.~Jin, Q.~Shi, Y.~Zhu, and J.~J.~Li,
The BCI-property of the bi-Cayley graphs,
\textit{J. Guangxi Norm. Univ. Nat. Sci. Ed.} \textbf{26} (2008), 33--36.

\end{thebibliography}
\end{document}